\documentclass[11pt]{article}
\usepackage{url}
\usepackage{enumerate}
\usepackage{amsmath}%
\usepackage{stmaryrd}
\usepackage{amsxtra}%
\usepackage{amsfonts}%
\usepackage{amssymb}%
\usepackage{amsthm}
\usepackage{graphicx}
\usepackage{subfigure}

\usepackage[margin=3cm]{geometry}

\newcommand{\gam}[1]{\gamma^{}_{#1}\hspace{-0.75mm}}
\newcommand{\gamw}[1]{\gamma^{}_{#1}\hspace{-3mm}'\hspace{2mm}}
\newcommand{\gami}{\gamma^{}_{i}\hspace{-0.25mm}}
\newcommand{\limsups}[1]{\underset{#1}{\operatorname{lim}\operatorname{sup}\ensuremath{^*}}\;}
\newcommand{\liminfs}[1]{\underset{#1}{\operatorname{lim}\operatorname{inf}\ensuremath{_*}}\;}
\newcommand{\iid}{\emph{i.i.d.}}

\newcommand{\almostsurely}{{\rm \ \ \ a.s.}}
\newcommand{\K}{{\mathcal K}}
\renewcommand{\H}{{\mathcal H}}
\newcommand{\A}{{\mathcal A}}
\newcommand{\B}{{\mathcal B}}
\renewcommand{\O}{{\mathcal O}}
\renewcommand{\bar}[1]{{\overline{#1}}}
\newcommand{\F}{{\mathcal F}}
\newcommand{\X}{{\mathcal X}}
\renewcommand{\b}{\vb{b}}
\newcommand{\R}{\mathbb{R}}
\newcommand{\N}{\mathbb{N}}
\newcommand{\Z}{\mathbb{Z}}
\newcommand{\supp}{{\rm supp}}
\newcommand{\eps}{\varepsilon}
\newcommand{\vb}[1]{\mathbf{#1}}
\renewcommand{\tilde}[1]{\widetilde{#1}}
\renewcommand{\phi}{\varphi}
\newcommand*\samethanks[1][\value{footnote}]{\footnotemark[#1]}
%
\newtheorem{theorem}{Theorem}
\theoremstyle{plain}

\newtheorem{definition}{Definition}

\newtheorem{lemma}{Lemma}

\newtheorem{proposition}{Proposition}
\newtheorem{remark}{Remark}


\numberwithin{equation}{section}


\title{A Hamilton-Jacobi equation for the continuum limit of non-dominated sorting\thanks{The research described in this paper was partially supported by ARO grant W911NF-09-1-0310 and NSF grants CCF-1217880 and DMS-0914567.}}

\author{Jeff Calder\thanks{Department of Mathematics, University of Michigan. ({\tt \{jcalder,esedoglu\}@umich.edu})}
   \and Selim Esedo\=glu\samethanks
   \and Alfred O. Hero\thanks{Department of Electrical Engineering and Computer Science, University of Michigan. ({\tt hero@eecs.umich.edu})}}

\begin{document}

\maketitle

\begin{abstract}
We show that non-dominated sorting of a sequence $X_1,\dots,X_n$ of \iid~random variables in $\R^d$ has a continuum limit that corresponds to solving a Hamilton-Jacobi equation involving the probability density function $f$ of $X_i$.  Non-dominated sorting is a fundamental problem in multi-objective optimization, and is equivalent to finding the canonical antichain partition and to problems involving the longest chain among Euclidean points.  As an application of this result, we show that non-dominated sorting is asymptotically stable under bounded random perturbations in $X_1,\dots,X_n$.  We give a numerical scheme for computing the viscosity solution of this Hamilton-Jacobi equation and present some numerical simulations for various density functions.  
\end{abstract}

\section{Introduction}
\label{sec:intro}

Let $X_1,\dots,X_n$ be \iid~random variables on $\R^d$ with density function $f\in L^1(\R^d)$.  The points form a partially ordered set $\X_n=\{X_1,\dots,X_n\}$ under the partial order 
\begin{equation}\label{eq:porder}
x \leqq y  \iff x_i \leq y_i \  \text{ for } \ i=1,\dots,d.
\end{equation}
Let $\ell(n)$ denote the length of a longest chain\footnote{A \emph{chain} is a totally ordered subset of $\X_n$.} in $\X_n$, and for $x \in \R^d$ let $u_n(x)$ denote the length of a longest chain in $\X_n$ consisting of points less than $x$.  We are interested in the asymptotic properties of $u_n$ as $n\to \infty$. 

When $f$ is a smooth density on $[0,1]^d$, hence $\ell(n)=u_n(1,\dots,1)$, the problem of studying the asymptotics of $\ell(n)$ has a long history.  It begins with Ulam's famous problem~\cite{ulam1961} of finding the length of a longest increasing subsequence of a random permutation.  Hammersley~\cite{hammersley1972} made some of the first breakthroughs in understanding Ulam's problem.  He observed that the distribution of the length of a longest increasing subsequence among $n$ numbers chosen uniformly at random is the same as the distribution of $\ell(n)$ for uniformly distributed points on $[0,1]^2$.  Using subadditive ergodic theory, Hammersley showed that $n^{-\frac{1}{2}} \ell(n)$ converges almost surely to a constant $c$ as $n\to \infty$, and he conjectured that $c=2$.  In subsequent papers, Vershik and Kerov~\cite{vershik1977} and Logan and Shepp~\cite{logan1977} showed that $c\leq2$ and $c\geq 2$, respectively.  Hammersley's results were generalized by Bollob\'as and Winkler~\cite{bollobas1988} to uniformly distributed points on $[0,1]^d$;  they showed that there exist positive constants $c_d$ such that $n^{-\frac{1}{d}}\ell(n) \to c_d$ almost surely as $n \to \infty$, and $c_d \nnearrow e$ as $d\to \infty$.  The only known values of $c_d$ are $c_1=1$ and $c_2=2$.  Deuschel and Zeitouni~\cite{deuschel1995} generalized Hammersley's results in another direction.  For $X_1,\dots,X_n$ \iid~on $[0,1]^2$ with $C^1$ density function $f:[0,1]^2 \to \R$, bounded away from zero, they showed that $n^{-\frac{1}{2}}\ell(n) \to 2\bar{J}$ in probability, where $\bar{J}$ is the supremum of the energy
\[J(\phi) = \int_0^1 \sqrt{\phi'(x) f(x,\phi(x))} \, dx,\]
over all $\phi:[0,1]\to[0,1]$ nondecreasing and right continuous.  

There is another motivation for studying the asymptotics of $u_n$ that arises in multi-objective optimization problems.  Such problems are of immense importance in many fields of science and engineering, including control theory and path planning~\cite{mitchell2003,kumar2010,madavan2002}, gene selection and ranking~\cite{speed2003,hero2002,hero2003,hero2004,fleury2002,fleury2004a,fleury2004b}, data clustering~\cite{handl2005}, database systems~\cite{kossmann2002,papadias2005} and image processing and computer vision~\cite{mumford1989,chan2001}.  In a discrete multi-objective optimization problem, one has several objective functions $g_i: S \to [0,\infty)$, where $i=1,\dots,d$ and $S=\{x^1,\dots,x^n\}$ is a finite set, and is tasked with finding an element $x\in S$ that minimizes \emph{all} of the functions simultaneously.  This is generally an impossible task, and instead, a family of solutions are obtained based on the notion of Pareto-optimality.  A feasible solution $x \in S$ is called \emph{Pareto-optimal} if for every $y \in S$, we have $g_i(y) > g_i(x)$ for some $i$, or $g_i(y)=g_i(x)$ for all $i$; in other words, no other feasible solution is better in every objective.  The collection of Pareto-optimal elements is denoted $\F_1$ and called the \emph{first Pareto front}.  It is the most natural notion of solution for a discrete multi-objective optimization problem. If we set $X_i=(g_1(x^i),\dots,g_d(x^i)) \in \R^d$ for $i=1,\dots,n$, then assuming all $X_i$ are distinct, it is not hard to see that 
\[x^i \in \F_1 \iff u_n(X_i)=1.\]
The second Pareto front, $\F_2$, consists of the Pareto-optimal elements of $S\setminus \F_1$, and in general
\[\F_k = {\rm Pareto \  optimal \  elements \ of \ } S \setminus \bigcup_{j< k} \F_j.\]
The Pareto front that a particular feasible solution lies on is useful for ranking feasible solutions. 
As before, when the $X_i$ are all distinct we have 
\[x^i \in \F_k \iff u_n(X_i) = k.\]
This observation is essential.  It says that studying the asymptotic shapes of the Pareto fronts $\F_1,\F_2,\dots$ is equivalent to studying the longest chain function $u_n$.  
Figure \ref{fig:example-fronts} shows the Pareto fronts for $n=50$ points uniformly distributed on $[0,1]^2$, and Figure \ref{fig:example-fronts-10^6} shows the Pareto fronts for $n=10^6$ points.    The points $X_i$ that are on the same Pareto front are connected by a continuous staircase curve that represents the jump set of $u_n$.  

\begin{figure}
\centering
\subfigure[$n=50$ points]{\includegraphics[clip=true, trim = 50 30 30 20, width=0.45\textwidth]{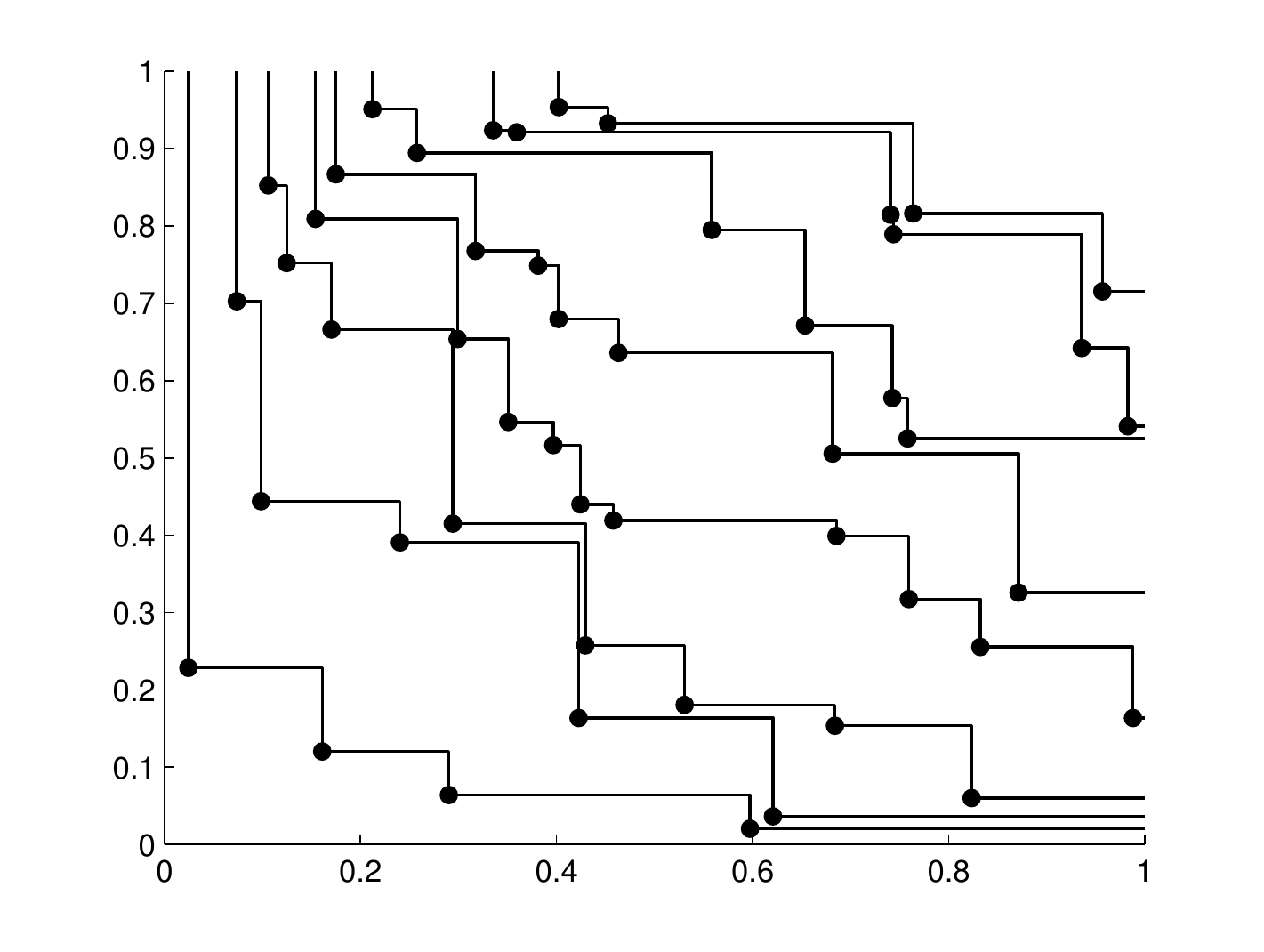}\label{fig:example-fronts}}
\subfigure[$n=10^6$ points]{\includegraphics[clip = true, trim = 55 35 30 20, width=0.45\textwidth]{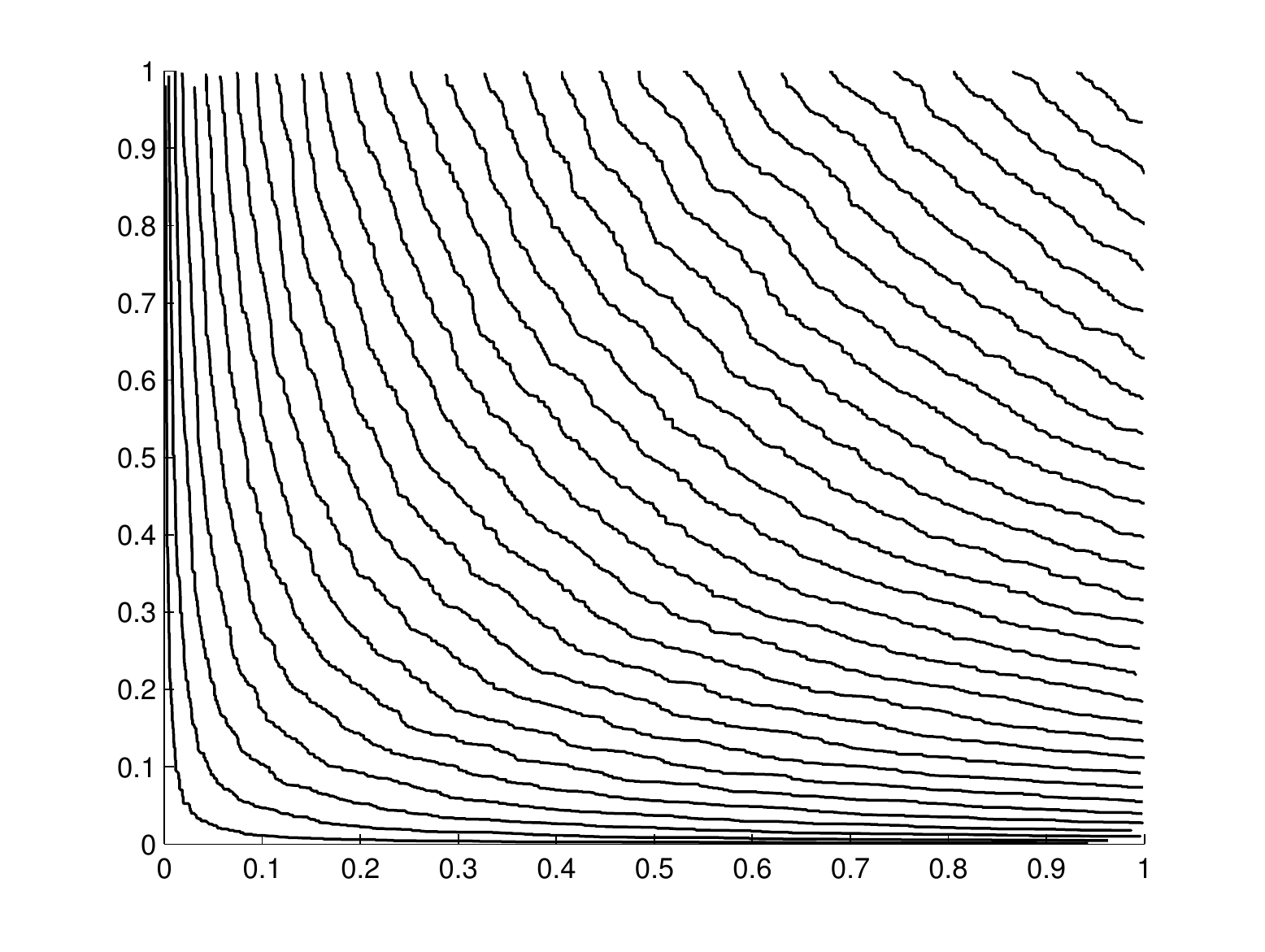}\label{fig:example-fronts-10^6}}
\caption{Examples of Pareto fronts for $X_1,\dots X_n$ chosen from the uniform distribution on $[0,1]^2$.  In (b), $29$ equally spaced fronts are depicted.}
\end{figure}
In the multi-objective optimization literature, the process of computing the Pareto fronts for a collection of points is called \emph{non-dominated sorting}~\cite{deb2002}.  In the combinatorics literature, the partition $S = \F_1 \cup \F_2 \cup \cdots$ is called the \emph{canonical antichain partition}~\cite{felsner1999}.  Although we have described non-dominated sorting in the context of a discrete optimization problem, it is a fundamental tool in continuous optimization as well.  Many state of the art algorithms for continuous optimization involve a large number of discrete subproblems, each of which requires non-dominated sorting.  The most common examples are the so-called genetic and evolutionary algorithms for continuous multi-objective optimization~\cite{deb2002,fonseca1993,fonseca1995,deb2001,srinivas1994}.  The applications of non-dominated sorting are not restricted to optimization; indeed, there are further striking applications in combinatorics~\cite{felsner1999,lou1993}, molecular biology~\cite{pevzner2000,adhar2007}, graph theory~\cite{lou1993}, Young Tableaux~\cite{viennot1984,felsner1999} and even in physical layout problems in the design of integrated circuits~\cite{adhar2007}.

The goal of this paper is to study the asymptotics of $u_n$, and hence the asymptotics of non-dominated sorting.  Our main result, Theorem \ref{thm:linfty-conv}, states that $n^{-\frac{1}{d}}u_n$ converges almost surely to a continuous function $U$, which is the viscosity solution of a Hamilton-Jacobi equation.   Our proof is based on linking the asymptotics of $u_n$ to a variational problem, which is a generalization of the variational problem discovered by Deuschel and Zeitouni~\cite{deuschel1995} to higher dimensions.  The Hamilton-Jacobi equation satisfied by $U$ is the Hamilton-Jacobi-Bellman equation~\cite{bardi1997} for the corresponding variational problem.   We describe our main result in Section \ref{sec:main}, and postpone the proofs to Sections \ref{sec:var} and \ref{sec:proof}.  In Section \ref{sec:num}, we give a numerical scheme for computing $U$, and show simulation results comparing the level sets of $U$ to Pareto fronts for various density functions.

\subsection{Main result}
\label{sec:main}

For $x,y \in \R^d$, we write $x \leq y$ if $x\leqq y$ and $x\neq y$.  When $x_i < y_i$ for $i=1,\dots,d$, we write $x<y$, and we set $\R^d_+ = \{x \in \R^d \, : \, x>0\}$.  We will always assume $d\geq 2$.  For $s,t \in \R$, $s\leq t$ and $s< t$ will retain their usual definitions.  Let $\Omega \subset \R^d$ and let $f: \R^d\to [0,\infty)$.  We place the following assumptions on $f$ and $\Omega$.
\begin{itemize}
\item[(H1)] There exists a continuous nondecreasing function $m: [0,\infty) \to [0,\infty)$ satisfying $m(0)=0$ such that
\[|f(x)-f(y)| \leq m(|x-y|),\]
for $x,y \in \Omega$, and $f(x) = 0$ for $x \not\in \Omega$,
\item[(H2)] $\Omega \subset \R^d_+$ is open and bounded with Lipschitz boundary.
\end{itemize}
Set
\begin{equation}\label{eq:Adef}
\A = \left\{\gamma \in C^1([0,1]; \R^d) \, : \, {\gamma \: }'(t) \geq 0 \ \text{ for all } \ t \in [0,1]\right\}.
\end{equation}
Recall that ${\gamma \: }'(t)\geq 0$ means that $\gami'(t) \geq 0$ for $i=1,\dots,d$ and ${\gamma \: }'(t)\neq 0$.
Define $J:\A \to [0,\infty)$ by
\begin{equation}\label{eq:Jdef}
J(\gamma) = \int_0^1 f(\gamma(t))^\frac{1}{d} (\gam{1}'(t)\cdots \gam{d}'(t))^\frac{1}{d} \, dt,
\end{equation}
and $U:\R^d\to \R$ by
\begin{equation}\label{eq:vardef}
U(x) = \sup_{\gamma \in \A \, : \, \gamma(1) \leqq x} J(\gamma).
\end{equation}
We make the following definition.
\begin{definition}\label{def:pm}
Given a domain $\O\subset \R^d$, we say that a function $u:\O \to \R$ is \emph{Pareto-monotone} if $x\leqq y \implies u(x) \leq u(y)$ for all $x,y \in \O$.  
\end{definition}

In Section \ref{sec:var}, we show that $U$ is a Pareto-monotone viscosity solution of the Hamilton-Jacobi partial differential equation (PDE)
\begin{equation}\label{eq:hjb-main}
\begin{cases}
U_{x_1} \cdots U_{x_d} = \frac{1}{d^d} f& \ \ \text{ on } \ \  \R^d_+  \\
U = 0& \ \ \text{ on } \ \  \partial \R^d_+.
\end{cases}
\end{equation}
The PDE \eqref{eq:hjb-main} should be interpreted as the Hamilton-Jacobi-Bellman equation for the value function $U$.  
We note that $f$ need only be Borel-measurable, bounded and have compact support in $\R^d_+$ for $U$ to be a viscosity solution of \eqref{eq:hjb-main}.  The stronger assumptions (H1) and (H2) are needed to prove that $U$ is the unique Pareto-monotone viscosity solution of \eqref{eq:hjb-main} (see Theorem \ref{thm:uniq}) under an appropriate boundary condition at infinity.
 Our main result is
\begin{theorem}\label{thm:linfty-conv}
Let $f$ satisfy (H1), let $\Omega$ satisfy (H2), and let $X_1,\dots,X_n$ be \iid~with density $f$.
Then there exists a positive constant $c_d$ such that 
\[n^{-\frac{1}{d}} u_n \longrightarrow c_d U \ \ \text{ in } \  L^\infty(\R^d) \  \text{ almost surely.}\]
\end{theorem}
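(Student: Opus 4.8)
The plan is to establish the convergence in two stages: first prove that $n^{-1/d} u_n \to c_d U$ pointwise almost surely (in fact in probability first, then almost surely), and then upgrade pointwise convergence to uniform convergence using the Pareto-monotonicity of $u_n$ and the continuity of the limit $U$. The linchpin is the variational characterization of $U$ in \eqref{eq:vardef}: I expect the paper to show in Section \ref{sec:var} that $U$ is continuous (indeed H\"older continuous, given (H1)--(H2)) and is the value function for the energy $J$, and that it is the unique Pareto-monotone viscosity solution of \eqref{eq:hjb-main}. Granting that, the analytic side is under control and the real work is the probabilistic comparison between the discrete longest-chain functional $u_n$ and the continuum energy $J$.

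For the lower bound $\liminf_n n^{-1/d} u_n(x) \geq c_d U(x)$, I would fix a near-optimal curve $\gamma \in \A$ with $\gamma(1) \leqq x$ and $J(\gamma)$ close to $U(x)$, and then cover the image of $\gamma$ by a chain of small axis-aligned boxes $Q_1 \leqq Q_2 \leqq \cdots \leqq Q_M$ along which $\gamma$ is increasing. On each box $Q_j$ the density $f$ is nearly constant (by (H1)), so the number of sample points in $Q_j$ is, by the law of large numbers, approximately $n f(\gamma(t_j)) |Q_j|$; within each box one invokes the Bollob\'as--Winkler result (the constant $c_d$) that a longest chain among $N$ uniform points in a box has length $\sim c_d N^{1/d}$. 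Concatenating the chains found in consecutive boxes and optimizing the box geometry — choosing the aspect ratios of $Q_j$ proportional to $\gamma_i'(t_j)$ so that the product $\prod_i (\text{side}_i)$ is controlled by $(\gamma_1' \cdots \gamma_d')^{1/d}$ — yields a chain of length at least $(1-\eps) c_d n^{1/d} \sum_j f(\gamma(t_j))^{1/d}(\gamma_1' \cdots \gamma_d' )^{1/d}|\Delta t_j|$, a Riemann sum for $c_d n^{1/d} J(\gamma)$. This requires a concentration/large-deviation estimate so that the finitely many box events hold simultaneously with high probability, and a Borel--Cantelli argument along a subsequence $n_k$ together with monotonicity of $u_n$ in $n$ to get the almost sure statement.

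For the matching upper bound $\limsup_n n^{-1/d} u_n(x) \leq c_d U(x)$, I would take any chain $X_{i_1} \leqq \cdots \leqq X_{i_k}$ of points $\leqq x$ realizing $u_n(x)=k$, partition the relevant region $[0,x] \cap \Omega$ into a grid of small boxes, and bound $k$ by summing over boxes the number of chain elements in each box. Inside a box, the longest chain among the $\approx n f |Q|$ points there is at most $(1+\eps)c_d (n f |Q|)^{1/d}$ with high probability; the geometry of which boxes a monotone chain can pass through — essentially a lattice-path / staircase constraint — is what turns the sum of these local bounds into a discrete optimization problem whose continuum limit is exactly $\sup_\gamma J(\gamma) = U(x)$. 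Equivalently, one shows that any such chain induces (after rescaling) an approximate competitor $\gamma$ for the variational problem, so $k \lesssim c_d n^{1/d} J(\gamma) \leq c_d n^{1/d} U(x)$; the rescaling of a staircase to a curve in $\A$ and the verification that $J$ is upper semicontinuous under this limiting procedure is the technical heart here.

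The main obstacle, I expect, is the upper bound and specifically the uniform-in-the-chain control: unlike the lower bound, where one gets to choose a single nice curve and a single nice box decomposition, the upper bound must simultaneously dominate the contribution of \emph{every} possible monotone chain, so one needs the local longest-chain estimates to hold uniformly over a (polynomially or exponentially large) family of boxes at once, which forces a careful union bound using sharp upper tail estimates for the longest chain in a box (of the form $\mathbb{P}(\text{longest chain} > t N^{1/d}) \leq e^{-ct^d}$ for $t$ large). Once both one-sided bounds give $n^{-1/d}u_n(x) \to c_d U(x)$ a.s. for each fixed $x$ in a countable dense set, the final step is routine: $u_n$ is Pareto-monotone and $U$ is uniformly continuous on the compact region of interest (and both are eventually $0$ off a fixed bounded set because $f$ has compact support), so pointwise convergence on a dense set plus an $\eps$-net sandwiching argument upgrades to $L^\infty(\R^d)$ convergence almost surely.
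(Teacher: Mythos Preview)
Your overall architecture matches the paper's: pointwise almost-sure convergence via matching upper and lower bounds, then a Pareto-monotonicity sandwich on a finite grid to upgrade to $L^\infty$. The final step is exactly what the paper does. The two middle stages, however, differ in their technical machinery.

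For the upper bound, you propose sharp exponential tail bounds on the longest chain in a box plus a union bound over the (large) family of boxes a monotone chain could visit, followed by Borel--Cantelli along a subsequence. The paper avoids tail estimates and Borel--Cantelli entirely: it discretizes the space of monotone paths into a \emph{finite} family $\Phi(k_1,k_2)$ of ``admissible multiindices'' $\vb{b}$ (grid staircases on a fine mesh), so that every chain is $\vb{b}$-increasing for some $\vb{b}$, and then applies the Bollob\'as--Winkler \emph{almost sure} convergence on each of the finitely many rectangles $R_{\vb{b},j}$. The finitely many rectangles that straddle cell boundaries are controlled by a separate strip lemma (Lemma~\ref{lem:strip}), which bounds the longest chain in a thin slab via a coupling argument. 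This buys the paper a cleaner proof with no concentration inequalities; your route is viable but heavier.

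The more substantive difference is how the discontinuity of $f$ across $\partial\Omega$ is handled. Your sketch invokes (H1) to say $f$ is nearly constant on small boxes, but (H1) only gives this for boxes contained in $\Omega$; boxes straddling $\partial\Omega$ see a jump, and you have not said what happens there. The paper does not confront this directly in the probabilistic argument. Instead it first proves the convergence for $L$-piecewise-constant densities (Theorems~\ref{thm:lower-bound}--\ref{thm:upper-bound}), and then, for general $f$, sandwiches $f$ between piecewise-constant $f_k$ (sup/inf on a grid), couples the samples to $f_k$-samples, applies the piecewise-constant result, and finally invokes the PDE perturbation Lemma~\ref{lem:l1-pert}---which rests on the comparison principle for \eqref{eq:hjb-main} with discontinuous right-hand side and the cone condition (H2$^*$)---to conclude that the value functions $v_k$ for $f_k$ converge to $U$. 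So in the paper, the Lipschitz-boundary hypothesis (H2) enters through the PDE uniqueness theory (Theorem~\ref{thm:comparison2}), not through the combinatorics; this is the one genuinely nonobvious ingredient your sketch does not anticipate.
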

The constants $c_d$ are the same as those given by Bollob\'as and Winkler~\cite{bollobas1988}.  In particular, $c_1=1$, $c_2=2$ and $c_d\nnearrow e$ as $d\to\infty$.   When $f$ is a product density, i.e., $f(x)=f_1(x_1) \cdots f_d(x_d)$,  the value function $U$ is given by
\begin{equation}\label{eq:prod-den-main}
U(x) = \left(\int_{0 \leqq y \leqq x} f(y) \, dy\right)^\frac{1}{d} =\left(\int_{0}^{x_i} f_1(t) \, dt\right)^\frac{1}{d} \cdots \left(\int_{0}^{x_d} f_d(t) \, dt\right)^\frac{1}{d}.
\end{equation}

For the case $f=1$ and $d=2$, Aldous and Diaconis~\cite[p.~204]{aldous1995} provided a non-rigorous derivation of \eqref{eq:hjb-main} by viewing the problem as an interacting particle process. They used this to motivate their proof that $c=2$ in Ulam's problem, but make no rigorous statements about the relationship between \eqref{eq:hjb-main} and the longest chain problem.  A similar, though tangentially related, PDE also appears in growth models in multiple dimensions that are defined through the height of a random partial order~\cite[p.~209]{seppalainen2007}.  

Theorem \ref{thm:linfty-conv} provides a new tool with which to study the asymptotics of non-dominated sorting and the longest chain problem.  As an example of the applicability of this result, we show in Theorem \ref{thm:stability} that non-dominated sorting is asymptotically stable under bounded random perturbations.   
Evidently, Theorem \ref{thm:linfty-conv} reduces the problem of non-dominated sorting to solving a Hamilton-Jacobi equation.  From an algorithmic perspective, this may be useful in designing fast approximate algorithms for non-dominated sorting, or finding lengths of longest chains.  We study some of these applications in a subsequent paper~\cite{calder2013b}.


\subsection{Motivation}
\label{sec:heur}

As motivation, let us give an informal derivation of the Hamilton-Jacobi PDE \eqref{eq:hjb-main}.
Suppose $f:\R^d\to\R$ is continuous and $n^{-\frac{1}{d}}u_n \to U \in C^1(\R^d)$ uniformly. Fix $n$ large enough so that $n^{-\frac{1}{d}}u_n \approx U$.  Then the $k^{\rm th}$ Pareto front should be well approximated by the level set $\{y \, : \, U(y) = n^{-\frac{1}{d}}k\}$.   It is not hard to see that $U$ should be Pareto-monotone (recall Definition \ref{def:pm}), and hence it is reasonable to assume that $U_{x_i} > 0$ for all $i$.  Fix $x,v \in \R^d$ with $\langle DU(x), v\rangle > 0$, where $DU(x)$ denotes the gradient of $U$ at $x$, and consider the quantity $n^{\frac{1}{d}}(U(x+v) -U(x))$.  This is approximately the number of Pareto fronts passing between $x$ and $x+v$.  When counting these fronts, we may restrict ourselves to the region
\[A =\{y \, : \, U(y) \geq U(x) \ \text{ and } \ y \leqq x+v\}.\]
This is because any samples in $\{y \, : \, U(y) < U(x)\}$ will be on a previous Pareto front and only samples that are less than $x+v$ can influence the Pareto rank of $x+v$. See Figure \ref{fig:derivation} for a depiction of this region and some quantities from the derivation.
\begin{figure}
\centering
\fbox{\includegraphics[width=0.6\textwidth]{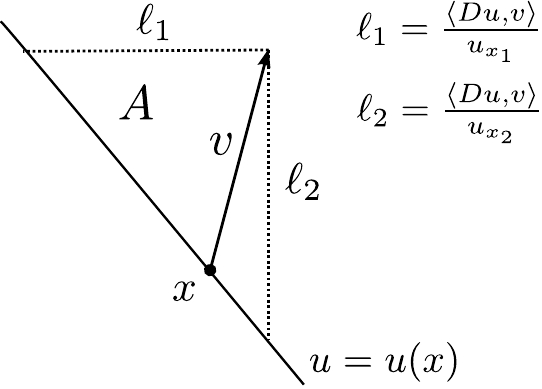}}
\caption{Some quantities from the informal derivation of the Hamilton-Jacobi PDE \eqref{eq:hjb-main}.}
\label{fig:derivation}
\end{figure}
Since $U_{x_i}(x) > 0$ for all $i$, and $U$ is $C^1$, $A$ is well approximated by a simplex for small $|v|$, and furthermore, the samples within $A$ are approximately uniformly distributed.  Let $m$ denote the number of samples falling in $A$.   By scaling the simplex into a standard simplex, without disrupting the Pareto ordering within $A$, it is reasonable to conjecture that the number of Pareto fronts within $A$ (or the length of a longest chain in $A$) is approximately $cm^{\frac{1}{d}}$ for some constant $c$, independent of $x$.  For simplicity we take $c=1$.

By the law of large numbers, we have $m \approx n\int_A f(y) \, dy$.  Hence when $|v|>0$ is small we have
\begin{equation}\label{eq:usim}
n^\frac{1}{d}(U(x+v)-U(x)) \approx \left(n\int_A f(y) \, dy\right)^{\frac{1}{d}} \approx n^{\frac{1}{d}}|A|^{\frac{1}{d}}f(x)^\frac{1}{d},
\end{equation}
where $|A|$ denotes the Lebesgue measure of $A$.
Let $\ell_1,\dots,\ell_d$ denote the side lengths of the simplex $A$.  Then $|A|\approx c \, \ell_1\cdots \ell_d$ for a constant $c$ which we again take to be $1$.  Since $x+v-\ell_i e_i$ lies approximately on the tangent plane to the level set $\{y \, : \, U(y)=U(x)\}$, we see that
\[\langle DU(x), v - \ell_i e_i\rangle \approx 0.\]
Rearranging the above we see that $\ell_i \approx U_{x_i}(x)^{-1}\langle D U(x),v\rangle$, and hence
\begin{equation}\label{eq:measureA}
|A|\approx U_{x_1}(x)^{-1} \cdots U_{x_d}(x)^{-1} \langle DU(x),v\rangle^d.
\end{equation}
For small $|v|$, we can combine \eqref{eq:measureA} and \eqref{eq:usim} to obtain
\[\langle DU(x), v\rangle \approx U(x+v) - U(x) \approx f(x)^\frac{1}{d}U_{x_1}(x)^{-\frac{1}{d}} \cdots U_{x_d}(x)^{-\frac{1}{d}} \langle DU(x),v\rangle.\]
Simplifying, we see that $U$ should satisfy
\begin{equation}\label{eq:hjb-der}
U_{x_1} \cdots U_{x_d} =f  \ \ \text{ on } \ \  \R^d,
\end{equation}
up to scaling by a constant.

Although this derivation is informal, it is straightforward and conveys the essence of the result.  It is difficult, however, to construct a rigorous proof based on these heuristics.  There are two main reasons for this.  First, it supposes that $n^{-\frac{1}{d}}u_n$ converges to a limit $U$, which is not obvious. Second, it is essential that $U \in C^1$, as we require $A$ to be an approximate simplex.  Solutions of \eqref{eq:hjb-der} are in general not smooth, and can have points of non-differentiability due to crossing characteristics.  This is true even in the case that $f$ is smooth, and is related to the geometry of $\Omega$.

\section{Analysis of variational problem}
\label{sec:var}

Before studying the variational problem \eqref{eq:vardef}, we recall some aspects of the theory of optimal control~\cite{bardi1997} that are relevant to our problem.  We will describe the infinite horizon optimal control problem, but the discussion below applies with minor modifications to other variants of optimal control, such as finite horizon or undiscounted problems with exit times.  The state of the control problem, $y(t)$, is assumed to obey the dynamics
\begin{equation}\label{eq:dynamics}
\begin{cases}
y'(t) = g(y(t),\alpha(t)),& t>0 \\
y(0)=x,
\end{cases}
\end{equation}
where $\alpha:[0,\infty) \to A$ is the control, $A$ is a topological space, and $g:\R^d \times A \to \R^d$.  
Given an initial condition $x \in \R^d$, the solution of \eqref{eq:dynamics} is denoted $y_x(\cdot)$.  Let 
\[\A:=\{{\rm measurable \ functions \ } [0,\infty) \to A\}.\]
The goal in optimal control is to select the control $\alpha \in \A$ to minimize the cost functional
\begin{equation}\label{eq:cost}
J(x,\alpha) : = \int_0^\infty c(y_x(t),\alpha(t)) e^{-\lambda t} \, dt,
\end{equation}
where $\lambda > 0$ and $c:\R^d \times A \to \R$. The value function for this problem is
\begin{equation}\label{eq:value}
v(x) := \inf_{\alpha \in \A} J(x,\alpha).
\end{equation}
Under sufficient regularity assumptions on $c$ and $g$ (discussed below), the value function is a H\"older- (or Lipschitz) continuous viscosity solution of the Hamilton-Jacobi-Bellman equation
\begin{equation}\label{eq:hjb-ex}
\lambda v + H(x,Dv)  = 0 \ \ \text{ on } \ \  \R^d,
\end{equation}
where
\begin{equation}\label{eq:hamiltonian}
H(x,p) =  \sup_{a \in A} \{ -\langle g(x,a), p\rangle - c(x,a) \}.
\end{equation}

Although the variational problem \eqref{eq:vardef} can be cast in this framework, the assumptions on the running cost $c(\cdot,\cdot)$ in the existing literature are too restrictive.  For our variational problem, we have $\lambda=0$, $g(x,a)=a$, $A=\R^d_+$,
\begin{equation}\label{eq:our-cost}
c(x,a) = -f(x)^\frac{1}{d} (a_1\cdots a_d)^\frac{1}{d},
\end{equation}
and $U(x) = -v(x)$.
In the proofs of Theorems \ref{thm:pointwise-upper} and \ref{thm:pointwise-lower}, we require the standard optimal control theory to hold for $f$ piecewise constant on arbitrarily small grids.  In the standard reference on optimal control~\cite{bardi1997}, it is assumed that $x \mapsto c(x,a)$ is uniformly continuous.  This assumption is then used to prove regularity of the value function $v$. There is relatively little research devoted to relaxing the regularity condition on $c$.  There are some results for the optimal control problem associated with the Eikonal equation~\cite{newcomb1995,camilli2006,deckelnick2004}, which allow $c$ to have discontinuities.  These results assume that $A=\R^d$ and make essential use of either Lipschitzness of $v$, or uniform continuity and/or coercivity of $p\mapsto H(x,p)$, none of which hold for the variational problem \eqref{eq:vardef}.  Soravia~\cite{soravia2002} and Garavello and Soravia~\cite{garavello2004} considered a running cost of the form $c(x,a) = c_1(x,a) + c_2(x)$, where $c_1$ is continuous and $c_2$ is Borel-measurable, and showed that the standard optimal control results hold with minor modifications. This is incompatible with \eqref{eq:our-cost} when $f$ is not continuous.  A similar program is carried out for differential games here~\cite{garavello2006}.  Barles et al.~\cite{barles2011} study optimal control on multi-domains, where the discontinuity in $c$ is assumed to lie in a half-space.

Under the assumption that $f$ is compactly supported, bounded and  Borel-measurable, the standard results on optimal control hold for the variational problem \eqref{eq:vardef} with minor modifications to the proofs.  In particular, in Lemma \ref{lem:holder} we show that $U$ is H\"older-continuous with exponent $\frac{1}{d}$, and in Theorem \ref{thm:hjb} we show that $U$ is a viscosity solution of \eqref{eq:hjb-main}.  The uniqueness of viscosity solutions of \eqref{eq:hjb-main} under the assumption that $f$ satisfies (H1) is a more delicate problem.  This is addressed in Section \ref{sec:comp}. 

In our main result, Theorem \ref{thm:linfty-conv}, we assume that $f$ satisfies (H1), which is stronger than Borel-measurability. We assume Borel-measurability in much of this section so that our results apply to piecewise constant densities, which are used to approximate $f$ in the proofs of Theorems \ref{thm:pointwise-upper} and \ref{thm:pointwise-lower}.  To be more precise,  we set
\begin{equation}\label{eq:function-space}
\B = \{f:\R^d \to \R \, : \, f \text{ is bounded, Borel-measurable, and }\supp(f) \subset [0,1]^d\}.
\end{equation}
We note that the assumption $\supp(f) \subset [0,1]^d$ is not restrictive, as we can make a simple scaling argument to obtain the case where $f$ has compact support in $\R^d$.  
We also note that Borel-measurability of $f$ is necessary, as opposed to Lebesgue-measurability, to guarantee that the composition $t \mapsto f(\gamma(t))$ is Lebesgue measurable.  

We now introduce some new notation.  We will write $\gamma \leqq x$ whenever $\gamma(t) \leqq x$ for all $t\in [0,1]$.  We write $\gamma_1 \leqq \gamma_2$ whenever $\gamma_1(1) \leqq \gamma_2(0)$.  The same definitions apply to $\leq,<, \geqq,\geqq$ and $>$ with obvious modifications.  
For $y\in \R^d$ and $r>0$ we set $B_r(y) = \{x \in \R^d \, : \, |x-y| < r\}$.
For $x,y \in \R^d$ we set
\begin{equation}
w(x,y) = \begin{cases}
\sup \{ J(\gamma) \, : \, \gamma \in \A  \ \text{ and } \ x \leqq \gamma \leqq y\}& {\rm if} \ x \leqq y \\
0& {\rm otherwise}.\end{cases}
\end{equation}

\subsection{Basic properties of $U$}
\label{sec:basic}

We establish here some basic properties of $U$.  Namely, in Lemma \ref{lem:holder} we establish H\"older-continuity of $U$, and in Lemma \ref{lem:dpp}, we establish a dynamic programming principle for $U$.
\begin{lemma}\label{lem:holder}
Let $f \in \B$.  Then $U$ is H\"older-continuous with exponent $\frac{1}{d}$ and H\"older seminorm $[U]_{\frac{1}{d}}\leq \|f\|^\frac{1}{d}_{L^\infty(\R^d)}$.
\end{lemma}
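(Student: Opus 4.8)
\emph{Proof plan.} The strategy is to establish the one–sided estimate $U(z)-U(w)\le\|f\|_{L^\infty}^{1/d}\,\|z-w\|_\infty^{1/d}$ for every pair $w\leqq z$, together with the trivial monotonicity $U(w)\le U(z)$ (which is immediate from \eqref{eq:vardef}, since $x\leqq y$ only enlarges the feasible set $\{\gamma\in\A:\gamma(1)\leqq x\}$). The general H\"older bound then follows by writing, for arbitrary $x,y$, $U(x\wedge y)\le U(x),U(y)\le U(x\vee y)$ with $\wedge,\vee$ the componentwise min and max, so that $|U(x)-U(y)|\le U(x\vee y)-U(x\wedge y)\le\|f\|_{L^\infty}^{1/d}\,\|x-y\|_\infty^{1/d}$ because $(x\vee y)-(x\wedge y)$ has entries $|x_i-y_i|$; and $\|\cdot\|_\infty\le\|\cdot\|$ gives $[U]_{1/d}\le\|f\|_{L^\infty}^{1/d}$.

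So I would fix $w\leqq z$, write $M=\|f\|_{L^\infty}$, and take any $\gamma\in\A$ with $\gamma(1)\leqq z$, aiming for $J(\gamma)\le U(w)+M^{1/d}\|z-w\|_\infty^{1/d}$ and then passing to the supremum. Since $\supp f\subset[0,1]^d$ and $\{t:\gamma(t)\in[0,1]^d\}$ is a subinterval of $[0,1]$ (a finite intersection of sub/super-level sets of the nondecreasing continuous maps $\gamma_i$), the invariance of $J$ under $C^1$ increasing reparametrization lets me assume $\gamma([0,1])\subset[0,1]^d$ (if that subinterval is degenerate then $J(\gamma)=0$ and there is nothing to prove). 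Now the generalized H\"older inequality (exponents $d,\dots,d$) and $f\le M$ give
\[
J(\gamma)\ \le\ M^{1/d}\int_0^1\big(\gamma_1'(t)\cdots\gamma_d'(t)\big)^{1/d}\,dt\ \le\ M^{1/d}\prod_{i=1}^d\Big(\int_0^1\gamma_i'(t)\,dt\Big)^{1/d}\ =\ M^{1/d}\prod_{i=1}^d\big(\gamma_i(1)-\gamma_i(0)\big)^{1/d},
\]
and every factor lies in $[0,1]$. The heart of the argument is the choice of competitor for $U(w)$: I let $[0,s]$ be the largest initial interval on which $\gamma$ stays in the box $\{\,\cdot\leqq w\,\}$, i.e.\ $s=\min_i\sup\{t:\gamma_i(t)\le w_i\}$, and rescale $[0,s]$ linearly onto $[0,1]$. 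Taking $i^*$ to attain this minimum, one has $\gamma(s)\leqq w$ and $\gamma_{i^*}(s)=w_{i^*}$, so the rescaled curve lies in $\A$ and is feasible in \eqref{eq:vardef} for $x=w$; its $J$-value is $\int_0^s f(\gamma(t))^{1/d}\big(\gamma_1'\cdots\gamma_d'\big)^{1/d}\,dt$ by reparametrization invariance, hence
\[
J(\gamma)-U(w)\ \le\ \int_s^1 f(\gamma(t))^{1/d}\big(\gamma_1'(t)\cdots\gamma_d'(t)\big)^{1/d}\,dt\ \le\ M^{1/d}\prod_{i=1}^d\big(\gamma_i(1)-\gamma_i(s)\big)^{1/d}.
\]
In this product the $i^*$–factor equals $\gamma_{i^*}(1)-w_{i^*}\le z_{i^*}-w_{i^*}\le\|z-w\|_\infty$, while the remaining $d-1$ factors are bounded by $\gamma_i(1)-\gamma_i(0)\le1$; this yields the desired bound on $J(\gamma)$.

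Two degenerate configurations have to be dispatched separately but cost nothing. If $\gamma(1)\leqq w$ already, then $\gamma$ is itself feasible for $U(w)$ and $J(\gamma)\le U(w)$. If instead $\gamma(1)\not\leqq w$ but $\gamma_k(0)\ge w_k$ for some $k$ (so the initial interval above is trivial), then $\gamma_k(1)-\gamma_k(0)\le z_k-w_k$ and the a priori bound already gives $J(\gamma)\le M^{1/d}\|z-w\|_\infty^{1/d}\le U(w)+M^{1/d}\|z-w\|_\infty^{1/d}$, using $U\ge0$. In the remaining case $\gamma(1)\not\leqq w$ and $\gamma(0)<w$, which forces $s\in(0,1)$ and legitimizes the rescaling, so the previous paragraph applies.

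The main obstacle I anticipate is getting the \emph{sharp} constant $\|f\|_{L^\infty}^{1/d}$ rather than a dimensional multiple of it; this is exactly what forces the competitor to be the restriction at the first exit time from $\{\,\cdot\leqq w\,\}$, so that only a single factor of the H\"older product is small (of order $\|z-w\|_\infty^{1/d}$) while confinement of $\gamma$ to the unit cube keeps all the others $\le1$. A naive coordinate-by-coordinate truncation would instead produce the weaker $\sum_i|x_i-y_i|^{1/d}$. A secondary technical point is that $f$ is merely Borel-measurable, so curves cannot be freely perturbed (e.g.\ translated) without losing control of $t\mapsto f(\gamma(t))$; both the reduction to $[0,1]^d$ and the restriction/rescaling of $\gamma$ are designed to avoid any such perturbation, and the supporting facts — reparametrization invariance of $J$, that $\{t:\gamma(t)\in[0,1]^d\}$ is an interval, $U\ge0$, and Pareto-monotonicity of $U$ — are all immediate from the definitions.
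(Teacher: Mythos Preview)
Your argument is correct and is essentially the paper's proof: both pick a near-optimal $\gamma$ for the larger point, define the exit time $s$ from $\{\,\cdot\leqq w\,\}$ (your $\min_i\sup\{t:\gamma_i(t)\le w_i\}$ is exactly the paper's $\sup\{t:\gamma(t)\leqq z\}$ for monotone $\gamma$), use $\gamma|_{[0,s]}$ as a competitor, and bound the tail by H\"older with a single small factor coming from the exit coordinate. The only organizational difference is that you first reduce to the ordered case $w\leqq z$ via the $\wedge/\vee$ trick and then symmetrize, whereas the paper handles arbitrary $x,z$ directly in one pass; the estimates and constants obtained are identical.
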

\begin{proof}
Let $x,z \in \R^d$ and let $\eps >0$.   Choose $\gamma \in \A$ with $\gamma \leqq x$ and $J(\gamma) \geq U(x) - \eps$. Since $f(x) = 0$ for $x \not\in [0,1]^d$, we may assume that $\gamma(t) \in [0,1]^d$ for all $t\in[0,1]$. Set
\[s = \sup \{ t \in [0,1] \, : \, \gamma(t) \leqq z\}.\]
If for all $t\in [0,1]$ we have $\gamma(t) \not\leqq z$, then set $s=0$.  We claim that
\begin{equation}\label{eq:holder-a}
U(z) \geq U(x)- \int_s^1 f(\gamma(t))^\frac{1}{d} (\gam{1}'(t) \cdots \gam{d}'(t))^\frac{1}{d} \, dt  - \eps.
\end{equation}
To see this: In the case that $s >0$, we have $\gamma(s) \leqq z$ and hence
\begin{align*}
U(z) &{}\geq{} \int_0^s f(\gamma(t))^\frac{1}{d} (\gam{1}'(t) \cdots \gam{d}'(t))^\frac{1}{d} \, dt \notag \\
&{}={} J(\gamma) - \int_s^1 f(\gamma(t))^\frac{1}{d} (\gam{1}'(t) \cdots \gam{d}'(t))^\frac{1}{d} \, dt\notag \\
&{}\geq{} U(x)- \int_s^1 f(\gamma(t))^\frac{1}{d} (\gam{1}'(t) \cdots \gam{d}'(t))^\frac{1}{d} \, dt  - \eps.
\end{align*}
In the case that $s=0$, we have
\begin{align*}
U(z) \geq 0 &{}={} J(\gamma) - \int_0^1 f(\gamma(t))^\frac{1}{d} (\gam{1}'(t) \cdots \gam{d}'(t))^\frac{1}{d} \, dt\notag \\
&{}\geq{} U(x)- \int_0^1 f(\gamma(t))^\frac{1}{d} (\gam{1}'(t) \cdots \gam{d}'(t))^\frac{1}{d} \, dt  - \eps.
\end{align*}
Hence  \eqref{eq:holder-a} is established.  Suppose $s<1$.  Then there must exist $i$ such that $\gamma_i(s) \geq z_i$.  It follows that
\[\int_s^1 \gami'(t)\, dt =  \gamma_i(1) - \gamma_i(s) \leq x_i - z_i = |x_i-z_i|. \]  
Applying the generalized H\"older inequality we see that
\begin{align*}
\int_s^1 f(\gamma(t))^\frac{1}{d} (\gam{1}'(t) \cdots \gam{d}'(t))^\frac{1}{d} \, dt &{}\leq{} \|f\|^\frac{1}{d}_{L^\infty(\R^d)}\left( \int_s^1 \gam{1}'(t) \, dt\right)^\frac{1}{d} \cdots \left(\int_s^1 \gam{d}'(t) \, dt \right)^\frac{1}{d} \\
&{}\leq{} \|f\|^\frac{1}{d}_{L^\infty(\R^d)} |x_i-z_i|^\frac{1}{d} \prod_{j\neq i} (\gamma_j(1)-\gamma_j(s))^\frac{1}{d} \\
&{}\leq{} \|f\|^\frac{1}{d}_{L^\infty(\R^d)} |x_i-z_i|^\frac{1}{d}.
\end{align*}
Inserting this into \ref{eq:holder-a} we obtain
\begin{equation}\label{eq:holder-b}
U(x) - U(z) \leq \|f\|^\frac{1}{d}_{L^\infty(\R^d)} |x_i-z_i|^\frac{1}{d} + \eps \leq \|f\|^\frac{1}{d}_{L^\infty(\R^d)} |x-z|^\frac{1}{d} + \eps 
\end{equation}
If $s=1$ then inspecting \eqref{eq:holder-a}, we see that $U(x) - U(z) \leq \eps$, which implies \eqref{eq:holder-b}.  Sending $\eps \to 0$ we find that $U(x) - U(z) \leq \|f\|^\frac{1}{d}_{L^\infty(\R^d)} |x-z|^\frac{1}{d}$.  We can reverse the roles of $x$ and $z$ in the preceding argument to obtain the opposite inequality. 
\end{proof}
\begin{remark}\label{rem:wremark}
By a similar argument, we can show that $w:\R^d\times \R^d \to \R$ is H\"older-continuous with exponent $1/d$ and $[w]_{\frac{1}{d}} \leq \|f\|^\frac{1}{d}_{L^\infty(\R^d)}$.
\end{remark}
\begin{lemma}[Dynamic Programming Principle]\label{lem:dpp}
Let $f \in \B$.  Then for any $r>0$ and $y \in \R^d$ we have
\begin{equation}\label{eq:dpp}
U(y) = \max_{x \in \partial B_r(y) \, : \, x \leq y} \{U(x) + w(x,y)\}.
\end{equation}
\end{lemma}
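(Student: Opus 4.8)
\emph{Proof plan.} The plan is to prove the inequalities ``$\geq$'' and ``$\leq$'' in \eqref{eq:dpp} separately. Writing $K_r(y) = \{x \in \partial B_r(y) : x \leq y\}$, this set is nonempty (it contains $y - \tfrac{r}{\sqrt d}(1,\dots,1)$) and compact, and by Lemma~\ref{lem:holder} and Remark~\ref{rem:wremark} the function $x \mapsto U(x) + w(x,y)$ is continuous on it; hence the two inequalities together yield both the identity and the attainment of the supremum. The only structural facts about $J$ I will use are its additivity under concatenation of curves and the fact that, since $f \geq 0$, prepending or appending monotone arcs to an admissible curve cannot decrease $J$.

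For ``$\geq$'', fix $x \in K_r(y)$ and $\eps > 0$, choose $\gamma_1 \in \A$ with $\gamma_1(1) \leqq x$ and $J(\gamma_1) \geq U(x) - \eps$ (monotonicity then gives $\gamma_1 \leqq x$), and choose $\gamma_2 \in \A$ with $x \leqq \gamma_2 \leqq y$ and $J(\gamma_2) \geq w(x,y) - \eps$ (the competitor set for $w(x,y)$ is nonempty since $x \leq y$). As $\gamma_1(1) \leqq x \leqq \gamma_2(0)$, I would splice $\gamma_1$, the segment from $\gamma_1(1)$ to $\gamma_2(0)$, and $\gamma_2$ into one monotone curve lying $\leqq y$ and smooth it into some $\gamma \in \A$ with $\gamma(1) \leqq y$, at a cost of at most $\eps$ in $J$. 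Then $J(\gamma) \geq J(\gamma_1) + J(\gamma_2) - \eps \geq U(x) + w(x,y) - 3\eps$, so $U(y) \geq U(x) + w(x,y) - 3\eps$; letting $\eps \to 0$ finishes this case.

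For ``$\leq$'', fix $\eps > 0$ and choose $\gamma \in \A$ with $\gamma(1) \leqq y$ and $J(\gamma) \geq U(y) - \eps$; as in the proof of Lemma~\ref{lem:holder} we may take $\gamma([0,1]) \subset [0,1]^d$. The difficulty is that $\gamma$ need not meet $\partial B_r(y)$ at all, so I would pad $\gamma$ on both ends: prepend the monotone segment from $\gamma(0) - T(1,\dots,1)$ to $\gamma(0)$ with $T$ large enough that the new initial point lies outside $B_r(y)$, and append the segment from $\gamma(1)$ to $y$ (valid because $\gamma(1) \leqq y$). Since $f \geq 0$ this does not decrease $J$, so after smoothing we obtain $\tilde\gamma \in \A$ with $\tilde\gamma(0) \notin B_r(y)$, $\tilde\gamma(1) = y$, $\tilde\gamma \leqq y$ and $J(\tilde\gamma) \geq U(y) - \eps$. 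Let $s = \sup\{t \in [0,1] : |\tilde\gamma(t) - y| \geq r\}$: this set is closed and contains $0$, while $|\tilde\gamma(1) - y| = 0$ keeps it away from $1$, so $s < 1$ and continuity from the right forces $|\tilde\gamma(s) - y| = r$, i.e.\ $x := \tilde\gamma(s) \in K_r(y)$. By monotonicity $\tilde\gamma|_{[0,s]} \leqq x$ and $x \leqq \tilde\gamma|_{[s,1]} \leqq y$, so $U(x) + w(x,y) \geq J(\tilde\gamma|_{[0,s]}) + J(\tilde\gamma|_{[s,1]}) = J(\tilde\gamma) \geq U(y) - \eps$; letting $\eps \to 0$ finishes this case.

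I expect the main obstacle to be the ``$\leq$'' direction: a near-optimal chain for $U(y)$ may terminate well short of $y$, so nothing forces it through the sphere $\partial B_r(y)$, and the repair is the two-sided padding above, which is harmless precisely because $f \geq 0$. The remaining nuisance is that splicing and padding produce merely piecewise-$C^1$ monotone curves that must be re-smoothed into $\A$ with negligible change in $J$; I would handle this by a localized smoothing near each corner (using the reparametrization invariance of $J$), or, more cleanly, by first checking once and for all that replacing $\A$ with the class of monotone Lipschitz curves changes neither $U$ nor $w$.
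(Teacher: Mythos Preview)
Your argument is correct. The ``$\geq$'' direction matches the paper's: pick near-optimal curves for $U(x)$ and $w(x,y)$ and concatenate. For ``$\leq$'' you take a genuinely different route. The paper does not pad $\gamma$ but instead splits into three cases according to the positions of $\gamma(0)$ and $\gamma(1)$ relative to $\partial B_r(y)$: if $|\gamma(1)-y|\geq r$ one picks any $x\in\partial B_r(y)$ with $\gamma(1)\leqq x\leqq y$ and uses $J(\gamma)\leq U(x)\leq U(x)+w(x,y)$; if $|\gamma(0)-y|\leq r$ one picks $x\in\partial B_r(y)$ with $x\leqq\gamma$ and uses $J(\gamma)\leq w(x,y)\leq U(x)+w(x,y)$; only in the remaining case does $\gamma$ actually cross the sphere, and there a $C^1$ reparametrization of each sub-arc suffices, so no smoothing enters at all. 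Your two-sided padding collapses the three cases into one clean crossing argument, at the price of introducing corners that must be mollified back into $\A$. Since concatenation in the ``$\geq$'' direction already forces both proofs to confront the piecewise-$C^1$ issue (the paper simply writes ``we can concatenate'' without further comment), your closing suggestion---to verify once that enlarging $\A$ to monotone Lipschitz curves leaves $U$ and $w$ unchanged---is the tidiest way to discharge this technicality uniformly for both directions.
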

\begin{proof}
Let us denote the right hand side of \eqref{eq:dpp} by $v(y)$.  We first show that $U(y) \leq  v(y)$. Let $\eps>0$ and let $\gamma \in \A$ such that $\gamma \leqq y$ and $J(\gamma) \geq U(y) - \eps$. Suppose that $|\gamma(1)-y| \geq r$.  Then there exists $x \in \partial B_r(y)$ such that $\gamma(1) \leqq x\leqq y$ and hence 
\[U(y)\leq J(\gamma) + \eps \leq U(x) + \eps \leq v(y) + \eps.\]
If $|\gamma(0) - y| \leq r$ then there exists $x \in \partial B_r(y)$ such that $x \leq \gamma \leq y$ and hence 
\[v(y) \geq w(x,y) \geq J(\gamma) \geq U(y) - \eps.\]
Finally, suppose that $|\gamma(1) - y| < r$ and $|\gamma(0) - y| > r$.  Then there exists $0<s < 1$ such that $|\gamma(s)-y|=r$.  Set $x=\gamma(s)$ and define $\gamma^1,\gamma^2 \in \A$ by
\[\gamma^1(t) = \gamma(st) \ \text{ and } \ \gamma^2(t) = \gamma(s + t(1-s)) \ \text{ for } \ t \in [0,1].\]
Note that $\gamma^1 \leqq x$ and $x \leqq \gamma^2 \leqq y$.
Since $J$ is invariant under a change of parametrization of $\gamma$, we see that
\[U(y) \leq J(\gamma)+\eps = J(\gamma^1) + J(\gamma^2)+\eps \leq U(x) + w(x,y)+\eps \leq v(y) + \eps.\]
Sending $\eps\to 0$ we obtain $U(y) \leq v(y)$.

We now show that $U(y) \geq v(y)$.  By Lemma \ref{lem:holder} and Remark \ref{rem:wremark}, there exists $x \in \partial B_r(y)$ with $x \leq y$ such that
\[v(y) = U(x) + w(x,y).\]
Let $\eps>0$ and let $\gamma^1,\gamma^2 \in \A$ with $\gamma^1 \leqq x$ and $x \leqq \gamma^2 \leqq y$ such that
\[J(\gamma^1) \geq U(x) - \frac{\eps}{2} \ \text{ and } \ J(\gamma^2)\geq w(x,y) - \frac{\eps}{2}.\]
Since $\gamma^1\leqq \gamma^2\leqq y$, we can concatenate $\gamma^1$ and $\gamma^2$ to find that $U(y)\geq J(\gamma^1) + J(\gamma^2)$.  Thus we have
\[U(y) \geq U(x) + w(x,y) - \eps = v(y) - \eps.\]
Sending $\eps\to 0$ yields $U(y) \geq v(y)$.
\end{proof}

\subsection{Hamilton-Jacobi-Bellman equation for $U$}
\label{sec:viscosity}

We digress momentarily to recall the definition of viscosity solution of
\begin{equation}\label{eq:genHJ}
H(Du) = f  \ \ \text{ on } \ \   \O,
\end{equation}
where $\O \subset \R^d$ is open, $H:\R^d \to \R$ is continuous, $f:\O \to \R$ is bounded, and $u: \O \to \R$ is the unknown function.  For more information on viscosity solutions of Hamilton-Jacobi equations,  we refer the reader to~\cite{bardi1997}.  The \emph{superdifferential} of $u$ at $x \in \O$, denoted $D^+u(x)$, is the set of all $p \in \R^d$ satisfying
\begin{equation}\label{eq:superjet}
u(y) \leq u(x) + \langle p,y-x\rangle + o(|x-y|) \ \text{ as } \ \O \ni y \to x.
\end{equation}
Similarly, the \emph{subdifferential} of $u$ at $x\in \O$, denoted $D^-u(x)$, is the set of all $p \in \R^d$ satisfying
\begin{equation}\label{eq:subjet}
u(y) \geq u(x) + \langle p,y-x\rangle + o(|x-y|) \ \text{ as } \ \O \ni y \to x.
\end{equation}
Equivalently, we may set
\[D^+ u(x) = \{ D\phi(x) \, : \, \phi \in C^1(\O) \ \text{ and } \ u-\phi \ {\rm has \ a \ local \ max \ at \ } x\},\]
and
\[D^- u(x) = \{ D\phi(x) \, : \, \phi \in C^1(\O) \ \text{ and } \ u-\phi \ {\rm has \ a \ local \ min \ at \ } x\}.\]
\begin{definition}
A \emph{viscosity subsolution} of \eqref{eq:genHJ} is a continuous function $u:\O \to \R$ satisfying
\begin{equation}
H(p) \leq f^*(x) \ \text{ for all } \ x \in \O \ \text{ and } \ p \in D^+u(x).
\end{equation}
Similarly, a \emph{viscosity supersolution} of \eqref{eq:genHJ} is a continuous function $u: \O \to \R$ satisfying
\begin{equation}
H(p) \geq f_*(x) \ \text{ for all } \ x \in \O \ \text{ and } \ p \in D^-u(x).
\end{equation}
\end{definition}

The functions $f_*$ and $f^*$ are the lower and upper semicontinuous envelopes of $f$, respectively, defined by
\[f^*(x) = \limsup_{r\ssearrow 0} \ \{ f(y) \, : \, y \in \O \ \text{ and } \  |x-y| \leq r \},\]
and $f_* = -(-f)^*$.
If $u$ is a viscosity subsolution and supersolution of \eqref{eq:genHJ}, then we say that $u$ is a \emph{viscosity solution} of \eqref{eq:genHJ}.  

After a basic proposition, we establish in Theorem \ref{thm:hjb} that $U$ is a \emph{Pareto-monotone} viscosity solution of \eqref{eq:hjb-main}. 
\begin{proposition}\label{prop:monotone}
Let $\O \subset \R^d$ be open and let $v:\O \to \R$ be continuous and Pareto-monotone. Then 
\[D^+ v(x)\cup D^- v(x) \subset \bar{\R^d_+} \ \text{ for all } \ x \in \O.\]
\end{proposition}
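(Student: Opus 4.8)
The plan is to read off the sign of each component of $p$ by probing the super-/subdifferential inequalities along the coordinate directions $\pm e_i$ and invoking Pareto-monotonicity. Since $\overline{\R^d_+} = \{p \in \R^d : p_i \geq 0 \text{ for } i=1,\dots,d\}$, it suffices to fix an arbitrary index $i$ and an arbitrary $p \in D^+ v(x) \cup D^- v(x)$ and to show $p_i \geq 0$. Because $\O$ is open, we may take $t>0$ small enough that both $x + t e_i$ and $x - t e_i$ lie in $\O$.

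First I would treat the case $p \in D^+ v(x)$. Applying \eqref{eq:superjet} with $y = x + t e_i$ gives $v(x + t e_i) \leq v(x) + t p_i + o(t)$ as $t \to 0^+$, while Pareto-monotonicity (since $x \leqq x + t e_i$) gives $v(x + t e_i) \geq v(x)$. Subtracting $v(x)$ and combining the two inequalities yields $0 \leq t p_i + o(t)$; dividing by $t>0$ and letting $t \to 0^+$ gives $p_i \geq 0$.

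Next I would treat the symmetric case $p \in D^- v(x)$, now using the direction $-e_i$. Applying \eqref{eq:subjet} with $y = x - t e_i$ gives $v(x - t e_i) \geq v(x) - t p_i + o(t)$, while Pareto-monotonicity (since $x - t e_i \leqq x$) gives $v(x - t e_i) \leq v(x)$. Combining, $-t p_i + o(t) \leq 0$; dividing by $t>0$ and letting $t \to 0^+$ again yields $p_i \geq 0$. Since $i$ and $p$ were arbitrary, this establishes $D^+ v(x) \cup D^- v(x) \subset \overline{\R^d_+}$.

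There is no real obstacle here: the only points requiring any care are keeping $x \pm t e_i$ inside $\O$ (handled by openness of $\O$) and tracking the sign of the $o(t)$ remainder when dividing through by $t$. The same argument shows, more generally, that the sub- and superdifferentials of any continuous coordinatewise-nondecreasing function are contained in $\overline{\R^d_+}$.
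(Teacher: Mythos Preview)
Your proof is correct and follows essentially the same approach as the paper: probe the super-/subdifferential inequality along the coordinate direction $e_i$ (respectively $-e_i$) and combine with Pareto-monotonicity to force $p_i \geq 0$. The paper only writes out the $D^+$ case and says the $D^-$ case is similar; your treatment of $D^-$ via $y = x - t e_i$ is exactly the intended ``similar'' argument.
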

\begin{proof}
Let $x \in \O$ and $p \in D^+ v(x)$. For any index $i$ and small enough $t>0$, we have $x \leqq x + te_i \in \O$.  Since $v$ is Pareto-monotone, we have
\[v(x) \leq v(x+te_i) \leq v(x) + p_i t + o(t) \ \text{ as } \ t\ssearrow 0.\]
Hence $p_i \geq o(t)/t$ as $t\ssearrow 0$ which implies that $p_i \geq 0$. The proof for $D^- v(x)$ is similar.
\end{proof}
\begin{theorem}\label{thm:hjb}
Let $f \in \B$.  Then the value function $U$ defined by \eqref{eq:vardef} is a Pareto-monotone viscosity solution of the Hamilton-Jacobi equation
\begin{equation}\label{eq:hjb}
U_{x_1} \cdots U_{x_d} = \frac{1}{d^d} f \ \ \text{ on } \ \  \R^d.
\end{equation}
Furthermore, $U$ satisfies
\begin{itemize}
\item[(i)] Whenever $\supp(f) \subset \{x \in \R^d \, : \, 0 \leqq x \leqq z\}$, we have 
\[U(x_1,\dots,x_d) = U(\min(x_1,z_1),\cdots \min(x_d,z_d)) \ {\rm for \ all } \ x \in \R^d_+,\]
\item[(ii)] $U(x) = 0$ for every $x \in \R^d \setminus \R^d_+$.
\end{itemize}
\end{theorem}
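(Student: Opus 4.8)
\emph{Proof plan.} I would verify the claims in the order: Pareto-monotonicity, property (ii), property (i), and then the viscosity subsolution and supersolution inequalities.

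Several pieces are essentially immediate. Pareto-monotonicity of $U$ follows directly from \eqref{eq:vardef}, since enlarging $x$ in the partial order enlarges the admissible set $\{\gamma\in\A:\gamma(1)\leqq x\}$. For (ii), if $x_i\le0$ for some $i$ then every admissible $\gamma$ has $\gamma_i$ nondecreasing with $\gamma_i(1)\le0$, hence $\gamma_i\le0$ throughout; on $\{t:\gamma_i(t)<0\}$ the point $\gamma(t)$ leaves $[0,1]^d$ so $f(\gamma(t))=0$, and on the closed subinterval where $\gamma_i\equiv 0$ we have $\gamma_i'=0$ a.e.\ and hence $\gamma_1'\cdots\gamma_d'=0$; thus $J(\gamma)=0$ and $U(x)=0$ (the bound $U(x)\ge0$ is trivial since $J\ge0$ and $t\mapsto x-(1-t)(1,\dots,1)$ is admissible). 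This also reduces the PDE claim to $x\in\bar{\R^d_+}$: on the open set $\R^d\setminus\bar{\R^d_+}$ both $U$ and $f$ vanish near $x$, so $D^{\pm}U(x)=\{0\}$ and $f^*(x)=f_*(x)=0$, and \eqref{eq:hjb} holds trivially; on $\partial\R^d_+$ the arguments below apply verbatim using the dynamic programming principle on all of $\R^d$. For (i), Pareto-monotonicity gives ``$\le$'', and for ``$\ge$'' I would take $\gamma\in\A$ with $\gamma\leqq x$ and $J(\gamma)\ge U(x)-\eps$, note that the set of times with $\gamma(t)\in\supp(f)\subset\{z:0\leqq z\leqq z\}$ lies in a closed interval $[\alpha,\beta]$ (the intersection of the intervals $\{0\le\gamma_i\le z_i\}$, each of which is an interval by monotonicity of $\gamma_i$), that the integrand of $J$ vanishes off $[\alpha,\beta]$, and that reparametrizing $\gamma|_{[\alpha,\beta]}$ onto $[0,1]$ gives $\tilde\gamma\in\A$ with $J(\tilde\gamma)=J(\gamma)$ (reparametrization invariance of $J$) and $\tilde\gamma(1)=\gamma(\beta)\leqq z$ and $\leqq\gamma(1)\leqq x$; hence $J(\gamma)\le U(\min(x_1,z_1),\dots,\min(x_d,z_d))$, and one lets $\eps\to0$.

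For the viscosity solution property the engine is the dynamic programming principle (Lemma \ref{lem:dpp}) together with the elementary two-sided estimate, valid whenever $x\leqq y$,
\[\Big(\inf_{[x,y]}f\Big)^{\frac1d}\Big(\prod_{i=1}^d(y_i-x_i)\Big)^{\frac1d}\ \le\ w(x,y)\ \le\ \Big(\sup_{[x,y]}f\Big)^{\frac1d}\Big(\prod_{i=1}^d(y_i-x_i)\Big)^{\frac1d},\]
where $[x,y]=\{z:x\leqq z\leqq y\}$; the upper bound comes from the generalized H\"older inequality applied to $\int_0^1(\gamma_1'\cdots\gamma_d')^{1/d}\,dt$ together with $f(\gamma)\le\sup_{[x,y]}f$, and the lower bound from inserting the straight segment $s\mapsto x+s(y-x)$ into $J$. \textbf{Subsolution:} suppose $U-\phi$ has a local maximum at $x_0$ (normalized so $U(x_0)=\phi(x_0)$) and set $p=D\phi(x_0)$; by Proposition \ref{prop:monotone} $p\ge0$, and if some $p_i=0$ then $p_1\cdots p_d=0\le\frac1{d^d}f^*(x_0)$, so assume $p>0$. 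For small $r>0$, Lemma \ref{lem:dpp} yields $x_r\in\partial B_r(x_0)$ with $x_r\le x_0$ and $w(x_r,x_0)=U(x_0)-U(x_r)\ge\phi(x_0)-\phi(x_r)=\langle p,x_0-x_r\rangle+o(r)$. Writing $v^r=(x_0-x_r)/r$ (a nonnegative unit vector), using the upper bound for $w$, dividing by $r$, passing to a subsequential limit $v^r\to v^*$, and using $\limsup_{r\to0}\sup_{[x_r,x_0]}f\le f^*(x_0)$ gives $\langle p,v^*\rangle\le f^*(x_0)^{1/d}(\prod_i v^*_i)^{1/d}$; if some $v^*_k=0$ the right side vanishes, forcing $\langle p,v^*\rangle=0$ and hence $v^*=0$, impossible, so $\prod_i v^*_i>0$, and AM-GM $\langle p,v^*\rangle\ge d(\prod_i p_i)^{1/d}(\prod_i v^*_i)^{1/d}$ yields $p_1\cdots p_d\le\frac1{d^d}f^*(x_0)$.

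\textbf{Supersolution:} suppose $U-\phi$ has a local minimum at $x_0$, $p=D\phi(x_0)\ge0$ (Proposition \ref{prop:monotone}); if $f_*(x_0)=0$ there is nothing to prove (this includes $x_0\in\partial\R^d_+$, where $f_*=0$), so assume $f_*(x_0)>0$. For any nonnegative unit vector $v$ with $\prod_i v_i>0$ and small $r$, applying Lemma \ref{lem:dpp} at the point $x_0-rv\in\partial B_r(x_0)$ (which is $\le x_0$) gives $w(x_0-rv,x_0)\le U(x_0)-U(x_0-rv)\le\phi(x_0)-\phi(x_0-rv)=r\langle p,v\rangle+o(r)$; combining with the lower bound for $w$, dividing by $r$, and letting $r\to0$ (using $\liminf_{r\to0}\inf_{[x_0-rv,x_0]}f\ge f_*(x_0)$) gives $f_*(x_0)^{1/d}(\prod_i v_i)^{1/d}\le\langle p,v\rangle$ for \emph{every} such $v$; testing with $v$ concentrated near a coordinate axis (left side of order $\eps^{(d-1)/d}$, right side of order $\eps$) forces $p>0$, and the choice $v_i\propto\prod_{j\neq i}p_j$ then yields $p_1\cdots p_d\ge\frac1{d^d}f_*(x_0)$. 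Combining the two inequalities shows $U$ is a viscosity solution of \eqref{eq:hjb}.

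I expect the main obstacle to be the subsolution inequality. On the supersolution side the dynamic programming principle may be probed along any direction $v$, so one simply optimizes over $v$; on the subsolution side the direction $v^*$ is handed to us by the achieved maximizer $x_r$ and cannot be chosen, so the bound $p_1\cdots p_d\le\frac1{d^d}f^*(x_0)$ must be squeezed out of the single inequality $\langle p,v^*\rangle\le f^*(x_0)^{1/d}(\prod_i v^*_i)^{1/d}$ — which works precisely because the Hamiltonian $p\mapsto p_1\cdots p_d$ is the one for which AM-GM is sharp, and only after excluding the degenerate limit direction. The second delicate point, present on both sides, is the bookkeeping of the semicontinuous envelopes: one must ensure that as the boxes $[x_r,x_0]$ (respectively $[x_0-rv,x_0]$) shrink to $x_0$, the factor $\sup f$ is dominated by $f^*(x_0)$ while $\inf f$ dominates $f_*(x_0)$, so that the correct envelope lands on the correct side of \eqref{eq:hjb}.
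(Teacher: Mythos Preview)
Your proposal is correct and follows essentially the same approach as the paper: both arguments hinge on the dynamic programming principle (Lemma~\ref{lem:dpp}) combined with the two-sided estimate for $w(x,y)$ coming from H\"older's inequality (upper bound) and the straight segment (lower bound). The only differences are cosmetic: you invoke AM--GM directly where the paper solves the equivalent constrained maximization via Lagrange multipliers, you pass to a subsequential limit direction $v^*$ where the paper retains an explicit $\eps$ throughout, and your argument that $p>0$ on the supersolution side tests with $v$ near a coordinate axis while the paper sends one component $a_i\to\infty$---all of which lead to the same inequalities.
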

\begin{proof}
It follows from the definition of $U$ \eqref{eq:vardef} that $U$ is Pareto-monotone, and (ii) follows from the fact that $\supp(f) \subset [0,1]^d$.

For (i), let $z \in \R^d$ such that $\supp(f) \subset \{x \in \R^d \, : \, 0 \leqq x\leqq z\}$ and let $x \in \R^d_+$ such that $x_i > z_i$ for some $i$.  Set $\hat{x} = (\min(x_1,z_1),\dots,\min(x_d,z_d))$. Since $U$ is Pareto-monotone we have $U(\hat{x}) \leq U(x)$.  Let $\eps > 0$ and $\gamma \in \A$ such that $\gamma \leqq x$ and $U(x) \leq J(\gamma) + \eps$.  Let
\[s = \sup \{ t \, : \, \gamma(t) \leqq \hat{x}\}.\]
If for all $t \in [0,1]$ we have $\gamma(t) \not\leqq \hat{x}$, then set $s=0$.
If $s = 1$, then $\gamma \leqq \hat{x}$ and hence $U(x) \leq J(\gamma)+\eps \leq U(\hat{x}) + \eps$.  If $s=0$ then for every $t \in [0,1]$, $\gamma(t) \not\in \supp(f)$, and hence $J(\gamma)=0$.   It follows that 
\[U(x) \leq J(\gamma) + \eps=\eps \leq U(\hat{x}) + \eps.\]
If $0<s < 1$, then for any $t>s$, $\gamma_i(t) > z_i$ for some $i$, and hence $f(\gamma(t)) = 0$.  Set $\gamma^1(t) = \gamma(st)$ for $t\in [0,1]$.   Then $\gamma^1 \leqq \hat{x}$ and $J(\gamma)=J(\gamma^1)$, hence $U(x)  \leq J(\gamma) + \eps = J(\gamma^1) + \eps \leq U(\hat{x}) + \eps$.  Sending $\eps \to 0$ we see that $U(x) \leq U(\hat{x})$ and hence $U(x) = U(\hat{x})$.  

We now show that $U$ is a viscosity supersolution of \eqref{eq:hjb}.  
Let $y \in \R^d$, let $a \in \R^d_+$, and set $\gamma(t) = y - a(1-t)$.   By Lemma \ref{lem:dpp} we have
\begin{align}\label{eq:hjb-c}
U(y) &{}\geq{} U(y - a(1-t)) + \int_t^1 f(y - a(1-s))^\frac{1}{d} (a_1\cdots a_d)^\frac{1}{d} \, ds \notag\\
&{}\geq{} U(y - a (1-t)) + (1-t)f_{*}(y)(a_1\cdots a_d)^\frac{1}{d} + o(1-t) \ \text{ as } \ t \nnearrow 1.
\end{align}
Let $p \in D^-U(y)$.  Since $y - a(1-t) \to y$ as $t \nnearrow 1$, we have
\begin{align*}
\langle p, (1-t) a\rangle &{}\stackrel{\eqref{eq:subjet}}{\geq} U(y) - U(y - a(1-t)) + o(1-t) \\
&{}\stackrel{\eqref{eq:hjb-c}}{\geq}(1-t)f_{*}(y) (a_1 \cdots a_d)^\frac{1}{d} + o(1-t) \ \text{ as } \ t \nnearrow 1.
\end{align*}
Sending $t \nnearrow 1$ we obtain
\[\langle p,a\rangle \geq f_{*}(y)^\frac{1}{d} (a_1\cdots a_d)^\frac{1}{d}.\]
Since $a> 0$ was arbitrary, we obtain
\begin{equation}\label{eq:hjb-a}
\sup_{a> 0} \left\{ -\langle p,a\rangle + f_{*}(y)^\frac{1}{d}(a_1\cdots a_d)^\frac{1}{d}\right\} \leq 0.
\end{equation}
Since $U$ is Pareto-monotone, Proposition \ref{prop:monotone} yields $p\geq 0$.  Hence if $f_{*}(y)=0$ then $U$ is trivially a viscosity supersolution of \eqref{eq:hjb} at $y$. We may therefore suppose that $f_{*}(y) > 0$.  
Fix $i$ and set $a_j=1$ for $j\neq i$.  By \eqref{eq:hjb-a} we have 
\[\sup_{a_i > 0}\left\{ -\sum_{j\neq i} p_j +  a_i^\frac{1}{d}f_{*}(y)^\frac{1}{d} - a_ip_i\right\} \leq 0.\]
Since $a_i$ can be arbitrarily large, we must have $p_i> 0$ for the above to hold.  Substituting $a_i = p_i^{-1}$ into \eqref{eq:hjb-a} and simplifying we obtain
\[p_1 \cdots p_d \geq \frac{1}{d^d}f_{*}(y).\]
Thus $U$ is a viscosity supersolution of \eqref{eq:hjb}. 

We now show that $U$ is a viscosity subsolution of \eqref{eq:hjb}.
Let $y \in \R^d$, let $\eps>0$, and let $p \in D^+U(y)$.   By Lemmas \ref{lem:holder} and \ref{lem:dpp} and Remark \ref{rem:wremark}, for every $r>0$ there exists $x\in \partial B_r(y)$ with $x \leq y$ such that $U(y) = U(x) + w(x,y)$. Hence there exists $\gamma \in \A$ with $x \leqq \gamma \leqq y$ such that
\[U(y) \leq \int_0^1 f(\gamma(t))^\frac{1}{d} (\gam{1}'(t) \cdots \gam{d}'(t))^\frac{1}{d} \, dt + U(x) + \eps r.\]
By H\"older's inequality
\begin{align*}
U(y) - U(x)&{}\leq{} \int_0^1 f(\gamma(t))^\frac{1}{d} (\gam{1}'(t) \cdots \gam{d}'(t) )^\frac{1}{d} \, dt + \eps r \\
&{}\leq{} (f^{*}(y)^\frac{1}{d} + o(1))\left(\int_0^1 \gam{1}'(t) \, dt \right)^\frac{1}{d} \cdots\left(\int_0^1 \gam{d}'(t) \, dt \right)^\frac{1}{d}  + \eps r\\
&{}\leq{} f^{*}(y)^\frac{1}{d}|x_1-y_1|^\frac{1}{d} \cdots |x_d-y_d|^\frac{1}{d} +  o(r) + \eps r,
\end{align*}
as $r\ssearrow 0$.  Since $x \to y$ as $r \ssearrow 0$, we have
\[\langle p,y-x\rangle \stackrel{\eqref{eq:superjet}}{\leq} U(y) - U(x) + o(r) \leq f^{*}(y)^\frac{1}{d}|x_1-y_1|^\frac{1}{d} \cdots |x_d-y_d|^\frac{1}{d} + o(r) + \eps r,\]
as $r \ssearrow 0$.
Choose $r>0$ small enough so that $o(r)/r \leq \eps$, and set $a=(y-x)/r$.  Then we have
\[ -\langle p,a \rangle + f^{*}(y)^\frac{1}{d}(a_1\cdots a_d)^\frac{1}{d} \geq  -2\eps.\]
Since $\eps>0$ was arbitrary, we see that
\[\sup_{a \geq 0 \, : \, |a|=1} \left\{-\langle p,a \rangle + f^{*}(y)^\frac{1}{d}(a_1\cdots a_d)^\frac{1}{d}\right\} \geq  0.\]
Since $U$ is Pareto-monotone, we have $p\geq 0$. If $p_i =0$ for some $i$, then $p_1\cdots p_d \leq f^{*}(y)/d^d$.  Thus we may assume that $p_i > 0$ for all $i$.  Then the supremum above is attained at some $a > 0$ with $|a|=1$.  By scaling $a$ so that $a_1\cdots a_d =1$, we see that
\begin{equation}\label{eq:hjb-b}
\sup_{a>0 \, : \, a_1 \cdots a_d=1} \left\{-\langle p,a \rangle + f^{*}(y)^\frac{1}{d}\right\} \geq  0.
\end{equation}
Since $p_i > 0$ for all $i$, we have that
\[\limsup_{|a|\to \infty, \ a >0} -\langle p,a\rangle + f^*(y)^\frac{1}{d} = -\infty.\]
It follows that the supremum in \eqref{eq:hjb-b} is attained at some $a^* > 0$.  
Introducing a Lagrange multiplier $\lambda > 0$, the necessary conditions for $a^*$ to be a maximizer of the above constrained optimization problem are
\[p_i = \frac{\lambda}{a^*_i} {\rm \ \ for \ all \ }i \in \{1,\dots,d\} \ \text{ and } \ a^*_1\cdots a^*_d = 1.\]
It follows that $\lambda  = (p_1\cdots p_d)^\frac{1}{d}$ and $a^*_i = p_i^{-1} (p_1\cdots p_d)^\frac{1}{d}$.
Substituting this into \eqref{eq:hjb-b} we have
\[p_1 \cdots p_d \leq \frac{1}{d^d} f^{*}(y),\]
which completes the proof.
\end{proof}
\begin{remark}\label{rem:truncation}
We remark that $U$ satisfies an important truncation property.  Namely, if we fix $z \in \R^d$ and define $\widetilde{U},\widehat{f}:\R^d \to \R$ by
\[\widetilde{U}(x_1,\dots,x_d) = U(\min(x_1,z_1),\dots,\min(x_d,z_d))\]
\[\widehat{f}(x) = \begin{cases} f^*(x) & \text{if } x \leqq z \\ 0 & {\rm otherwise,}\end{cases}\]
then we have
\[\widetilde{U}_{x_1} \cdots \widetilde{U}_{x_d} = \widehat{f} \ \ \text{ on } \ \  \R^d\]
in the viscosity sense.
Indeed, this follows directly from Theorem \ref{thm:hjb} by noting that $\supp(\widehat{f}) \subset \{x \in \R^d\, : \, 0 \leqq x\leqq z\}$ and
\[\widetilde{U}(x) = \sup_{\gamma \in \A \, : \, \gamma \leqq x} \int_0^1 \widehat{f}(\gamma(t))^\frac{1}{d} (\gam{1}'(t) \cdots \gam{d}'(t))^\frac{1}{d} \, dt.\]
\end{remark}

\subsection{Comparison principle}
\label{sec:comp}

We aim here to establish that $U$ is the unique viscosity solution of \eqref{eq:hjb-main} under hypotheses (H1) and (H2) on $f$ and $\Omega$, which in general allow $f$ to be discontinuous.  The standard results on uniqueness of viscosity solutions~\cite{bardi1997,crandall1992} assume uniformly continuous dependence on spatial variables. There has been some recent work relaxing this condition, as it is important in many applications.  Tourin~\cite{tourin1992} considered Hamilton-Jacobi equations of the form $H(x,Du)=0$, where $x \mapsto H(x,p)$ is allowed to have a discontinuity along a smooth surface, and proved a comparison principle under the assumption that $p\mapsto H(x,p)$ is convex and uniformly continuous.  Neither assumption holds for \eqref{eq:hjb-main}, although the non-convexity can be easily remedied.  Deckelnick and Elliot~\cite{deckelnick2004} prove a comparison principle for Lipschitz viscosity solutions of Eikonal-type equations of the form $H(Du) = f$, where $f$ satisfies a regularity condition similar to (H1), but slightly more general.  As exhibited by the solution $U(x)=(x_1\cdots x_d)^\frac{1}{d}$ of \eqref{eq:hjb-main} for $f=1$, solutions of \eqref{eq:hjb-main} are not in general Lipschitz continuous.  Camilli and Siconolfi~\cite{camilli2003} proposed a new notion of viscosity solution for Hamilton-Jacobi equations in which $H$ has measurable dependence on the spatial variable $x$.  They obtain general uniqueness results under the assumption that $p \mapsto H(x,p)$ is quasiconvex and coercive.  Their results do not apply to \eqref{eq:hjb-main} due to the coercivity assumption.

The main result in this section, Theorem \ref{thm:uniq}, establishes uniqueness for \eqref{eq:hjb-main} under hypotheses (H1) and (H2), and a boundary condition at infinity.  Let us give a sketch of the proof now.  Let $u$ be a Pareto-monotone viscosity solution of \eqref{eq:hjb-main}.  We first prove a standard comparison principle, in Theorem \ref{thm:comparison}, for uniformly continuous $f$.  We can then define the regularized value functions $U_\eps$ and $U^\eps$ by replacing $f$ by its inf and sup convolutions $f_\eps$ and $f^\eps$, respectively, in \eqref{eq:vardef}.  Since $f^\eps$ and $f_\eps$ are Lipschitz continuous, the comparison principle from Theorem \ref{thm:comparison} yields $U_\eps \leq u \leq U^\eps$.  The proof is completed by showing that $U_\eps,U^\eps \to U$ as $\eps \to 0$, where $U$ is the value function defined by \eqref{eq:vardef}.  We establish a more general result in Lemma \ref{lem:l1-pert}, the proof of which relies on the second comparison principle, Theorem \ref{thm:comparison2}.  This comparison principle holds for $f$ and $\Omega$ satisfying (H1) and (H2) under the additional assumption that the subsolution is truncatable, as per Definition \ref{def:trunc}.  As pointed out in Remark \ref{rem:truncation}, the value function $U$ is truncatable, so Theorem \ref{thm:comparison2} is applicable in the proof of Lemma \ref{lem:l1-pert}.

\begin{theorem}\label{thm:comparison}
Suppose $f:\R^d_+ \to \R$ is uniformly continuous with $\supp(f) \subset [0,1]^d$.  Let $u$ and $v$ be viscosity sub- and supersolutions, respectively, of 
\begin{equation}\label{eq:pde-gen}
u_{x_1}\cdots u_{x_d} = f \ \ \text{ on } \ \  \R^d_+,
\end{equation}
and suppose that 
\begin{equation}\label{eq:projection}
u(x) = u(\min(x_1,1),\dots,\min(x_d,1)) \ \text{ for } \ x \in \R^d_+,
\end{equation}
and $v$ is Pareto-monotone.  If $u\leq v$ on $\partial \R^d_+$ then $u\leq v$ on $\R^d_+$.   
\end{theorem}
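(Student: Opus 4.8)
The plan is to argue by contradiction with the classical doubling-of-variables technique, but first to exploit the two structural hypotheses — the projection property \eqref{eq:projection} of $u$ and the Pareto-monotonicity of $v$ — to replace the problem on the unbounded domain $\R^d_+$ by one in which the relevant maximum is attained at an interior point of the compact cube $[0,1]^d$. The ``tilt'' function $\phi(x):=(x_1\cdots x_d)^{1/d}$ will be central: it is an exact solution of $\phi_{x_1}\cdots\phi_{x_d}=d^{-d}$ on $\R^d_+$, it is smooth with $D\phi>0$ in the interior, it vanishes on $\partial\R^d_+$, and it is Pareto-monotone. \emph{Reduction to the cube.} Fix a small $\theta>0$ and suppose, for contradiction, that $\sup_{\R^d_+}(u-v)>0$; then $\sup_{\R^d_+}(u-v-\theta\phi)>0$ too. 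If $x\in\R^d_+$ has a coordinate exceeding $1$, put $\hat x=(\min(x_1,1),\dots,\min(x_d,1))\in(0,1]^d$; then $u(x)=u(\hat x)$ by \eqref{eq:projection}, $v(x)\ge v(\hat x)$ since $v$ is Pareto-monotone and $\hat x\leqq x$, and $\phi(x)\ge\phi(\hat x)$ since $\phi$ is Pareto-monotone, so $(u-v-\theta\phi)(x)\le(u-v-\theta\phi)(\hat x)$. Hence $\sup_{\R^d_+}(u-v-\theta\phi)=\sup_{(0,1]^d}(u-v-\theta\phi)$, and since $u-v-\theta\phi$ is continuous on $[0,1]^d$ and equals $u-v\le0$ on the faces lying in $\partial\R^d_+$ (where $\phi=0$), this positive supremum is a maximum attained at an interior point $z\in(0,1]^d\subset\R^d_+$.

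\emph{Doubling and use of monotonicity.} Localize near $z$ in the standard way and maximize $(x,y)\mapsto u(x)-v(y)-\theta\phi(x)-|x-y|^2/(2\eps)$; one gets maximizers $(x_\eps,y_\eps)\to z$ with $|x_\eps-y_\eps|^2/\eps\to0$. With $q_\eps:=(x_\eps-y_\eps)/\eps$ we have $q_\eps\in D^-v(y_\eps)$, so Proposition \ref{prop:monotone} (using that $v$ is Pareto-monotone) forces $q_\eps\geqq0$; and $p_\eps:=q_\eps+\theta D\phi(x_\eps)\in D^+u(x_\eps)$ (up to an error vanishing as $\eps\to0$ from the localizing term). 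Since $D\phi(z)>0$, we get $p_\eps\geqq c\,\theta>0$ coordinatewise for $\eps$ small. This is the crucial point: the otherwise badly behaved Hamiltonian $p\mapsto p_1\cdots p_d$ is only ever evaluated on the positive orthant, where it is the $d$-th power of the concave geometric mean, and the tilt by $\theta\phi$ pushes the doubling gradients strictly into the interior of that orthant, making a priori bounds available.

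\emph{Closing the estimate.} The sub/supersolution inequalities read $(p_\eps)_1\cdots(p_\eps)_d\le f(x_\eps)$ and $(q_\eps)_1\cdots(q_\eps)_d\ge f(y_\eps)$. The bound $p_\eps\geqq c\theta$ already gives $f(x_\eps)\ge(c\theta)^d>0$ for $\eps$ small, hence $f(z)>0$; combined with $(p_\eps)_1\cdots(p_\eps)_d\le\|f\|_{L^\infty(\R^d)}$ it yields two-sided bounds $0<c(\theta)\le(p_\eps)_i\le C(\theta)<\infty$. Writing $q_\eps=p_\eps-\theta D\phi(x_\eps)$ and using concavity of $p\mapsto(p_1\cdots p_d)^{1/d}$ (equivalently, passing to the uniformly continuous datum $f^{1/d}$),
\[f(y_\eps)^{1/d}\le\bigl(\textstyle\prod_i(q_\eps)_i\bigr)^{1/d}\le\bigl(\textstyle\prod_i(p_\eps)_i\bigr)^{1/d}\Bigl(1-\tfrac{\theta}{d}\textstyle\sum_i\tfrac{(D\phi(x_\eps))_i}{(p_\eps)_i}\Bigr)\le f(x_\eps)^{1/d}\bigl(1-\delta(\theta)\bigr),\]
where $\delta(\theta)>0$ is fixed (depending only on $\theta$, $z$, $\|f\|_{L^\infty(\R^d)}$). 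Since $x_\eps,y_\eps\to z$, continuity of $f$ gives $f(x_\eps)^{1/d},f(y_\eps)^{1/d}\to f(z)^{1/d}$; letting $\eps\to0$ yields $f(z)^{1/d}\le f(z)^{1/d}(1-\delta(\theta))$, contradicting $f(z)>0$. Hence $\sup_{\R^d_+}(u-v)\le0$.

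\emph{Main obstacle.} The real difficulty lies in the second step: $p\mapsto p_1\cdots p_d$ is neither convex nor coercive, so no standard comparison machinery applies off the shelf, and $\R^d_+$ is unbounded. The projection property of $u$ together with Pareto-monotonicity of $v$ is exactly what restores compactness and, through Proposition \ref{prop:monotone}, pins the doubling gradients into the positive orthant, where the geometric-mean structure and the tilt $\theta\phi$ make the perturbation argument quantitative. I expect the only tedious pieces to be the routine localization near $z$ and the explicit check that $\delta(\theta)>0$.
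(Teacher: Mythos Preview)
Your argument is correct and follows the same overall architecture as the paper's proof --- reduction to the cube via \eqref{eq:projection} and Pareto-monotonicity, then doubling of variables, with Proposition~\ref{prop:monotone} forcing the relevant gradients into $\bar{\R^d_+}$ --- but the perturbation you choose is genuinely different. You subtract $\theta\phi$ with $\phi(x)=(x_1\cdots x_d)^{1/d}$ from the subsolution side and then use concavity of the geometric mean together with two-sided bounds on $p_\eps$ to manufacture the strict gap $\delta(\theta)>0$. The paper instead perturbs on the supersolution side by the \emph{linear} function $\theta^{1/d}\langle x,\vb{1}_d\rangle$, so that $v_\theta=v+\theta^{1/d}\langle x,\vb{1}_d\rangle$ becomes a viscosity supersolution of the strictly larger right-hand side $f+\theta$; the point is the elementary expansion
\[
p_1\cdots p_d=\prod_{i=1}^d\bigl((p_i-\theta^{1/d})+\theta^{1/d}\bigr)\ge \prod_{i=1}^d(p_i-\theta^{1/d})+\theta
\]
whenever $p_i\ge\theta^{1/d}$, which Proposition~\ref{prop:monotone} guarantees. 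After doubling, the \emph{same} vector $p$ lies in $D^+u(x_\alpha)\cap D^-v_\theta(y_\alpha)$, giving directly $f(x_\alpha)\ge p_1\cdots p_d\ge f(y_\alpha)+\theta$, contradicting uniform continuity of $f$ as $|x_\alpha-y_\alpha|\to0$. This avoids your two-sided bounds on $(p_\eps)_i$, the concavity step, and the separate verification that $f(z)>0$. Your route works and nicely foregrounds the role of the explicit solution $\phi$ and the concavity structure; the paper's linear tilt is shorter because the gap $\theta$ is built in algebraically rather than extracted analytically. (Minor point: your localization remark is unnecessary --- the same projection/monotonicity argument you use on the diagonal applies to the doubled function $\Psi(x,y)$, yielding a maximizer in $[0,1]^d\times[0,1]^d$ directly, and any subsequential limit of $(x_\eps,y_\eps)$ is a diagonal maximum point in $(0,1]^d$.)
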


The proof of Theorem \ref{thm:comparison} utilizes the method of doubling the variables, which is standard in the theory of viscosity solutions~\cite{bardi1997}, with appropriate modifications for the boundary condition \eqref{eq:projection}.
\begin{proof}
For $\theta >0$, set $v_\theta(x) = v(x) + \theta^\frac{1}{d}\langle x, \vb{1}_d\rangle$, where $\vb{1}_d = (1,\dots,1)\in \R^d$.  Fix $x \in \R^d_+$ and $p \in D^-v_\theta(x)$. It is easy to see that $p- \theta^\frac{1}{d} \vb{1}_d \in D^-v (x)$. Hence we have
\[(p_1 - \theta^\frac{1}{d})\cdots (p_d-\theta^\frac{1}{d}) - f(x)  \geq 0.\]
Since $v$ is Pareto-monotone, we have $p_i - \theta^\frac{1}{d} \geq 0$ for all $i$, and therefore
\begin{align*}
p_1\cdots p_d = (p_1 - \theta^\frac{1}{d} + \theta^\frac{1}{d}) \cdots (p_d - \theta^\frac{1}{d} + \theta^\frac{1}{d}) &\geq (p_1-\theta^\frac{1}{d}) \cdots (p_d-\theta^\frac{1}{d}) + \theta \\
&\geq f(x) + \theta.
\end{align*}
Hence $v_\theta$ is a viscosity supersolution of
\[v_{\theta,x_1} \cdots v_{\theta,x_d} = f + \theta \ \ \text{ on } \ \  \R^d_+,\]
and $u \leq v_\theta$ on $\partial \R^d_+$.

Suppose that $\delta:=\sup_{\R^d_+} (u - v_\theta) >0$.  For $x,y \in \R^d_+$ and $\alpha > 0$, set
\[\Phi_\alpha(x,y) = u(x)-v_{\theta}(y) - \alpha|x-y|^2,\]
and $M_\alpha= \sup_{\R^d_+ \times \R^d_+}\Phi_\alpha$.  Setting $x=y$ in $\Phi_\alpha(x,y)$ we see that $M_\alpha \geq \delta  > 0$
for all $\alpha$.  Let 
\[\hat{x} = (\min(x_1,1),\dots,\min(x_d,1)) \ \text{ and } \ \hat{y} = (\min(y_1,1),\cdots,\min(y_d,1)).\]
Note that $|\hat{x} - \hat{y}| \leq |x-y|$, $u(\hat{x}) = u(x)$ (by \eqref{eq:projection}) and $v_\theta(\hat{y}) \leq v_\theta(y)$ for all $x,y\in \R^d_+$.  It follows that $\Phi_\alpha(\hat{x},\hat{y}) \geq \Phi_\alpha(x,y)$, and hence $\Phi_\alpha$ attains a maximum at some $(x_\alpha,y_\alpha) \in [0,1]^d\times [0,1]^d$.  Since $M_\alpha \geq \delta$, we have
\begin{equation}\label{eq:uv}
u(x_\alpha) - v_\theta(y_\alpha) \geq \delta + \alpha|x_\alpha - y_\alpha|^2.
\end{equation}
It follows that 
\begin{equation}\label{eq:xyalpha}
|x_\alpha - y_\alpha| \leq \frac{1}{\sqrt{\alpha}} \left( \|u\|_{L^\infty((0,1)^d)} - v_\theta(0)\right)^\frac{1}{2}.  
\end{equation}
Since $u \leq v_\theta$ on $\partial \R^d_+$ and $(x,y) \mapsto u(x) - v_\theta(y)$ is continuous, it follows from \eqref{eq:uv} and \eqref{eq:xyalpha} that $(x_\alpha,y_\alpha) \in (0,1]^d \times (0,1]^d$ for $\alpha$ large enough. For such $\alpha$ we have $p:=2\alpha(x_\alpha - y_\alpha) \in  D^+ u(x_\alpha)\cap D^-v_{\theta}(y_\alpha)$, and hence
\[p_1\cdots p_d \leq  f(x_\alpha) \  \text{ and }  \ p_1\cdots p_d \geq f(y_\alpha) + \theta.\]
Subtracting the above equations yields $ f(x_\alpha) - f(y_\alpha)\geq \theta > 0$, which contradicts the uniform continuity of $f$ and \eqref{eq:xyalpha} as $\alpha \to \infty$.  Therefore $u \leq v_\theta$ on $\R^d_+$.  Sending $\theta \to 0$ completes the proof.
\end{proof}

We can prove a comparison principle for discontinuous $f$ by assuming that the subsolution satisfies the truncation property described in Remark \ref{rem:truncation}.
For this, we make the following definition.
\begin{definition}\label{def:trunc}
Let $u$ be a viscosity subsolution of 
\begin{equation}\label{eq:trunc-eq}
u_{x_1} \cdots u_{x_d} = f \ \ \text{ on } \ \  \R^d_+.
\end{equation}
We say that $u$ is \emph{truncatable} if for every $z \in \R^d_+$, $\tilde{u}$ is a viscosity subsolution of
\[\tilde{u}_{x_1} \cdots \tilde{u}_{x_d} = \widehat{f} \ \ \text{ on } \ \  \R^d_+,\]
where $\tilde{u}$ and $\widehat{f}$ are defined in Remark \ref{rem:truncation}.
\end{definition}

We note that that truncatability is well-defined, i.e., it depends only on $f^*$.  
By Remark \ref{rem:truncation}, the value function $U$ is truncatable.  It is easy to see that every $C^1$ Pareto-monotone subsolution of \eqref{eq:trunc-eq} is truncatable.  It turns out, thanks to Theorem \ref{thm:uniq}, that every Pareto-monotone viscosity solution of \eqref{eq:trunc-eq} satisfying \eqref{eq:projection} is truncatable.  

We place the following assumptions on $f:\R^d_+ \to [0,\infty)$ and $\Omega$.
\begin{itemize}
\item[(H1)] There exists a continuous nondecreasing function $m: [0,\infty) \to [0,\infty)$ satisfying $m(0)=0$ such that
\[|f(x)-f(y)| \leq m(|x-y|),\]
for $x,y \in \Omega$, and $f(x) = 0$ for $x \not\in \Omega$. 
\item[(H2)] $\Omega\subset \R^d_+$ is open and bounded with Lipschitz boundary.
\end{itemize}
In particular, since $\Omega$ is open (H1) implies that $f=f_*$ on $\R^d_+$.
The assumptions on $\Omega$ imply that the following cone condition is satisfied.
\begin{itemize}
\item[(H2$^*$)] For every $x \in \partial \Omega$, there exists a cone $\K_x$ with nonempty interior and a neighborhood $V_x$ of $x$ such that  
\[y\in V_x \setminus \Omega \implies (y + \K_x) \cap \bar{\Omega}\cap V_x \subset \{y\}.\]
\end{itemize}
To see this:  For any $x \in \partial \Omega$ there exists, by Lipchitzness of $\partial \Omega$, a real number $r>0$ and a Lipschitz continuous function $\Psi:\R^{d-1} \to \R$ such that, upon relabelling and reorienting the coordinate axes if necessary, we have
\[\Omega \cap B_r(x) = \{y \in B_r(x) \, : \, y_d < \Psi(y_1,\dots,y_{d-1})\}.\]
One can check that the cone
\[\K_x = \left\{y \in \R^d \, : \, y_d \geq 2{\rm Lip}(\Psi)\sqrt{y_1^2 + \cdots y_{d-1}^2}\right\}\]
satisfies (H2$^*$).
As it is more useful in the comparison principle proof, we will assume that (H2$^*$) holds instead of Lipschitzness of the boundary.
We note that the cone condition (H2$^*$) is similar to the one used by Deckelnick and Elliot~\cite[p.~331]{deckelnick2004}.  
\begin{theorem}\label{thm:comparison2}
Suppose that $\Omega$ satisfies (H2$^*$) and $f$ satisfies (H1).
Let $u$ and $v$ be viscosity sub- and supersolutions, respectively, of 
\begin{equation}\label{eq:pde-back}
u_{x_1} \cdots u_{x_d} = f \ \ \text{ on } \ \  \R^d_+,
\end{equation}
and assume that $u$ is truncatable and $v$ is Pareto-monotone.  Then $u\leq v$ on $\partial \R^d_+$ implies that $u\leq v$ on $\R^d_+$.
\end{theorem}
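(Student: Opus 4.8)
The plan is to run the doubling‑of‑variables method from the proof of Theorem~\ref{thm:comparison}, adding one new ingredient: a perturbation in a ``cone direction'' supplied by (H2$^*$) that absorbs the jump of $f$ across $\partial\Omega$. First I would make the usual reduction: for $\theta>0$, $v_\theta(x)=v(x)+\theta^{1/d}\langle x,\vb{1}_d\rangle$ is a Pareto‑monotone viscosity supersolution of $v_{\theta,x_1}\cdots v_{\theta,x_d}=f+\theta$ on $\R^d_+$ with $u\le v_\theta$ on $\partial\R^d_+$. Fix $z\in\R^d_+$ with $\overline{\Omega}\subset\{x:0\leqq x\leqq z\}$; since $u$ is truncatable, $\widetilde u(x)=u(\min(x_1,z_1),\dots,\min(x_d,z_d))$ is a viscosity subsolution of $\widetilde u_{x_1}\cdots\widetilde u_{x_d}=\widehat f$, where $\widehat f=f^*$ (the indicator $\mathbf 1_{\{x\leqq z\}}$ of Remark~\ref{rem:truncation} is redundant because $\supp f^*\subset\overline{\Omega}\subset\{x\leqq z\}$), and $\widetilde u$ inherits the projection identity $\widetilde u(x)=\widetilde u(\min(x_1,z_1),\dots,\min(x_d,z_d))$. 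I would argue by contradiction: suppose $\delta:=\sup_{\R^d_+}(\widetilde u-v_\theta)>0$; Pareto‑monotonicity of $v_\theta$ together with $u\le v_\theta$ on $\partial\R^d_+$ gives $\widetilde u-v_\theta\le 0$ on $\partial\R^d_+$.

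Doubling the variables exactly as in Theorem~\ref{thm:comparison} (the projection identity for $\widetilde u$ and Pareto‑monotonicity of $v_\theta$ confine a maximizer $(x_\alpha,y_\alpha)$ of $\widetilde u(x)-v_\theta(y)-\alpha|x-y|^2$ to $[0,z]^d\times[0,z]^d$), one obtains along a subsequence $(x_\alpha,y_\alpha)\to(\bar x,\bar x)$ with $\widetilde u(\bar x)-v_\theta(\bar x)=\delta$, so $\bar x\in(0,\infty)^d$, and for $\alpha$ large the matched slope $p_\alpha:=2\alpha(x_\alpha-y_\alpha)$ lies in $D^+\widetilde u(x_\alpha)\cap D^-v_\theta(y_\alpha)$, whence $(p_\alpha)_1\cdots(p_\alpha)_d\le f^*(x_\alpha)$ and $(p_\alpha)_1\cdots(p_\alpha)_d\ge f(y_\alpha)+\theta$. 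In particular $f^*(x_\alpha)\ge\theta>0$, which forces $x_\alpha\in\overline{\Omega}$ (off $\overline{\Omega}$, $f$ vanishes on a ball, so $f^*=0$ there), and $p_\alpha\ge0$ by Proposition~\ref{prop:monotone}. If $\bar x\notin\overline{\Omega}$, then $f^*(x_\alpha)=0$ for $\alpha$ large, a contradiction; if $\bar x\in\Omega$, then $x_\alpha,y_\alpha\in\Omega$ eventually, $f^*(x_\alpha)=f(x_\alpha)$, and (H1) gives $\theta\le f(x_\alpha)-f(y_\alpha)\le m(|x_\alpha-y_\alpha|)\to0$, again a contradiction. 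Everything reduces to the case $\bar x\in\partial\Omega$, which is where (H2$^*$) must do real work.

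For $\bar x\in\partial\Omega$, let $\K_{\bar x}$ and $V$ be the cone and neighborhood from (H2$^*$) and fix $\xi\in\operatorname{int}\K_{\bar x}$ with $|\xi|=1$. From the local Lipschitz‑graph description used to verify (H2$^*$) I would extract two facts: (a) if $w\in\overline{\Omega}\cap V$ and $h>0$ is small, then $w-h\xi\in\Omega$ (moving ``inward''); and (b) for such $w$ the one‑sided value $f^*(w)$ is attained from inside $\Omega$, i.e.\ from the side of $\partial\Omega$ containing $w-h\xi$, so by (H1) $f^*(w)\le f(w-h\xi)+m(h)$. I would then repeat the doubling with a shift and a localization about $\bar x$, maximizing over $\overline{B_\rho(\bar x)}\times\overline{B_\rho(\bar x)}$ (with $\rho$ small, so this ball lies in $V\cap\R^d_+$) the function
\[\Phi_\alpha^h(x,y)=\widetilde u(x)-v_\theta(y)-\alpha|x-y-h\xi|^2-|x-\bar x|^2-|y-\bar x|^2.\]
Since $\bar x$ is a strict maximum of $(\widetilde u-v_\theta)(\cdot)-|\cdot-\bar x|^2$, for $h$ small and $\alpha$ large the maximizers $(x_\alpha^h,y_\alpha^h)$ lie in the open ball, satisfy $x_\alpha^h-y_\alpha^h\to h\xi$ as $\alpha\to\infty$ and $x_\alpha^h,y_\alpha^h\to\bar x$ as $\alpha\to\infty$ then $h\to0$, and carry slopes in $D^+\widetilde u(x_\alpha^h)$ and $D^-v_\theta(y_\alpha^h)$ differing by $2(x_\alpha^h-\bar x)+2(y_\alpha^h-\bar x)$, a quantity tending to $0$. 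Modulo this vanishing perturbation the sub/supersolution inequalities again give $f^*(x_\alpha^h)\ge\theta$, so $x_\alpha^h\in\overline{\Omega}$; then (a) gives $x_\alpha^h-h\xi\in\Omega$, and $x_\alpha^h-y_\alpha^h\to h\xi\in\operatorname{int}\K_{\bar x}$ together with the graph geometry puts $y_\alpha^h\in\Omega$ for $\alpha$ large, so that, using (b),
\[\theta\le f^*(x_\alpha^h)-f(y_\alpha^h)\le\bigl(f(x_\alpha^h-h\xi)+m(h)\bigr)-f(y_\alpha^h)\le m(h)+m\bigl(|y_\alpha^h-x_\alpha^h+h\xi|\bigr).\]
Letting $\alpha\to\infty$ and then $h\to0$ yields $\theta\le 0$, the required contradiction.

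\textbf{Main obstacle.} The delicate part is the boundary case $\bar x\in\partial\Omega$, and within it the ``vanishing perturbation'' above: the localizing penalties shift the matched dual variables by quantities that only tend to $0$, and the Hamiltonian $p\mapsto p_1\cdots p_d$ is sensitive to small components of $p$, so one must argue carefully that this shift can be absorbed into the inequalities — and that $x_\alpha^h\in\overline{\Omega}$ even when only the product, not each component, of the dual variable is controlled. This is where either an a~priori gradient bound — available, for instance, when the supersolution is H\"older continuous, as the value functions to which this theorem is ultimately applied are, by Lemma~\ref{lem:holder} — or a sharper localization is needed. Secondary but still necessary points are the verification of the geometric facts (a), (b) drawn from (H2$^*$) and the check that the shifted, localized doubling indeed concentrates at $\bar x$ inside $\R^d_+$; the remaining ingredients (the reduction, and the doubling away from $\partial\Omega$) follow Theorem~\ref{thm:comparison} essentially verbatim.
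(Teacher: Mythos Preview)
Your overall strategy is the right one: reduce to a strict supersolution $v_\theta$, double the variables, and use a shift in a cone direction from (H2$^*$) to break the boundary degeneracy at $\partial\Omega$. The reduction and the treatment of the cases $\bar x\in\Omega$ and $\bar x\notin\overline\Omega$ are correct and indeed match the paper. The geometric facts (a) and (b) you extract from (H2$^*$) are also fine.

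The genuine gap is the one you flag yourself under ``Main obstacle'', and it is not a technicality. In your second, localized doubling you add the penalties $|x-\bar x|^2+|y-\bar x|^2$, so the maximizer produces
\[
p=2\alpha(x_\alpha^h-y_\alpha^h-h\xi)-2(x_\alpha^h-\bar x)\in D^+\widetilde u(x_\alpha^h),\qquad
q=2\alpha(x_\alpha^h-y_\alpha^h-h\xi)+2(y_\alpha^h-\bar x)\in D^-v_\theta(y_\alpha^h),
\]
with $p\neq q$. You then need $p_1\cdots p_d\ge q_1\cdots q_d-o(1)$, but the Hamiltonian $p\mapsto p_1\cdots p_d$ is not uniformly continuous on $\{p\ge0\}$: individual components of $q$ are not bounded (the theorem does not assume $v$ is H\"older), and an $O(1)$ component multiplied by a vanishing error can still be large. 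Without this comparison you cannot conclude $f^*(x_\alpha^h)\ge\theta$ and hence cannot place $x_\alpha^h\in\overline\Omega$, so your chain of inequalities does not close. Your proposed escape hatch (assume $v$ H\"older, as value functions are by Lemma~\ref{lem:holder}) proves a weaker theorem.

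The paper avoids this difficulty by localizing \emph{without} a quadratic penalty, using truncatability in a sharper way than your single global truncation. Instead of first finding $\bar x$ and then penalizing, the paper runs a ``first crossing'' argument: let $R$ be the largest radius with $u\le v_\theta$ on $\R^d_+\cap B_R(0)$, pick $z_0\in\partial B_R(0)$ with $u(z_0)=v_\theta(z_0)$, and truncate $u$ \emph{locally} at $z_0+r^2\vb{1}_d$ to obtain $\widetilde u$. The projection identity for $\widetilde u$ together with Pareto--monotonicity of $v_\theta$ and the fact that $\widetilde u\le v_\theta$ for $|x|<R$ forces the maximizer of
\[
\Phi_\alpha(x,y)=\widetilde u(x)-v_\theta(y)-\alpha\Bigl|x-y-\tfrac{1}{\sqrt\alpha}\eta\Bigr|^2
\]
to lie in a fixed small box $\bar H\ni z_0$, with no penalty term needed. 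Consequently the \emph{same} slope $p=2\alpha\bigl(x_\alpha-y_\alpha-\tfrac{1}{\sqrt\alpha}\eta\bigr)$ lies in $D^+\widetilde u(x_\alpha)\cap D^-v_\theta(y_\alpha)$, and the comparison $f^*(x_\alpha)-f_*(y_\alpha)\ge\theta$ follows directly. The shift is taken as $\alpha^{-1/2}\eta$ (rather than a fixed $h\xi$) so that it vanishes as $\alpha\to\infty$; one then shows $\sqrt{\alpha}(x_\alpha-y_\alpha)\to\eta\in\operatorname{int}\K_{z_0}$, which is exactly what the cone condition needs in Case~2 ($y_\alpha\notin\Omega$) to force $x_\alpha\notin\overline\Omega$ and hence $f^*(x_\alpha)=0$, a contradiction. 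The single doubling with $\alpha$--scaled shift, combined with local truncation at $z_0$, is precisely the ``sharper localization'' you were looking for.
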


As in the proof of Theorem \ref{thm:comparison}, the proof below is based on the standard technique of doubling the variables~\cite{crandall1992}.   The proof is similar to~\cite[Theorem~2.3]{deckelnick2004} in the way that (H2$^*$) is used, however, we cannot assume Lipschitzness of $v$.  The truncatability condition on $u$ in a sense replaces the Lipschitz condition on $v$ in~\cite[Theorem~2.3]{deckelnick2004}.
\begin{proof}
For $\theta >0$, set $v_\theta(x) = v(x) + \theta^\frac{1}{d}\langle x, \vb{1}_d\rangle + \theta$.  Then $u < v_\theta$ on $\partial \R^d_+$.  As in the proof of Theorem \ref{thm:comparison}, $v_\theta$ is a viscosity supersolution of
\begin{equation}\label{eq:strict-super}
v_{\theta,x_1}\cdots v_{\theta,x_d} = f + \theta \ \ \text{ on } \ \  \R^d_+.
\end{equation}
Now suppose that $\sup_{\R^d_+} (u-v_\theta) >0$ and let
\[R = \sup \left\{ r>0 \, : \, u\leq v_\theta \ \ \text{ on } \ \  \R^d_+ \cap B_r(0)\right\}.\]
Since $u < v_\theta$ on $\partial \R_+^d$ and $u -v_\theta$ is continuous, we see that $0 < R< \infty$.  By the definition of $R$, there exists $z_0 \in \partial B_R(0) \cap \R^d_+$ such that $u(z_0)=v_\theta(z_0)$ and every neighborhood of $z_0$ contains a point $y$ such that $u(y) > v_\theta(y)$.
For $r>0$ set
\[H = \{ x \in \R^d \, : \, z_0 - r\vb{1}_d < x < z_0 + r^2 \vb{1}_d\}.\]
and note that $\sup_H (u-v_\theta) > 0$ for any $r>0$.

Notice that we may assume (H2$^*$) holds at any $x \in \R^d_+$.  Indeed, if $x \not\in \partial \Omega$ then we may set $V_x = B_\sigma(x)$ and choose $\sigma>0$ small enough so that $\partial \Omega \cap V_x = \varnothing$.  Then any cone $\K_x$  will suffice as either $V_x \setminus \Omega = \varnothing$ or $V_x \cap \bar{\Omega} = \varnothing$.  Let $\eta \in S^{d-1}$ be in the interior of $\K_{z_0}$.  For $x\in \R^d_+$ set
\[\tilde{u}(x_1,\dots,x_d)= u(\min(x_1,z_{0,1}+r^2),\dots,\min(x_d,z_{0,d} + r^2)).\]
For $\alpha > 0$ and $(x,y) \in \R^d_+\times \R^d_+$, set
\begin{equation}\label{eq:defPHI}
\Phi_\alpha(x,y) = \tilde{u}(x) - v_\theta(y) - \alpha\left|x - y - \frac{1}{\sqrt{\alpha}} \eta\right|^2,
\end{equation}
and $M_\alpha = \sup_{\R^d_+ \times \R^d_+} \Phi_\alpha$.  By continuity of $\tilde{u}$ and $v_\theta$ we have
\begin{equation}\label{eq:liminf}
\eps:= \liminf_{\alpha \to \infty} \ M_\alpha \geq \sup_{\R_+^d} (\tilde{u} - v_\theta) > 0.
\end{equation}
Set
\[D = \{x \in \R^d_+ \, : \, x_i \leq z_{0,i} - r \ {\rm for \ some} \ i\}.\]
Let $x \in D$ such that $x \leqq z_0 + r^2 \vb{1}_d$.  Then there exists $i$ such that $x_i \leq z_{0,i} -r$ and we have
\[|x|^2\leq (z_{0,i} - r)^2 + \sum_{j\neq i} (z_{0,j} + r^2)^2 = |z|^2 - 2z_{0,i} r + O(r^2).\]
Since $|z|=R$ and $z_{0,i} > 0$, we can choose $r>0$ small enough so that $x \in B_R(0)$.  Fixing such an $r>0$ we have
\begin{equation}\label{eq:uleqv}
\tilde{u} \leq v_\theta {\rm \ \ on \ \ } D \cap \{x \in \R^d_+ \, : \, x \leqq z_0 + r^2 \vb{1}_d\}.
\end{equation}

Since $\tilde{u}$ and $v_\theta$ are uniformly continuous on compact sets, it follows from \eqref{eq:uleqv} that there exists $\delta>0$ such that 
\[D \ni x \leqq z_0 + r^2\vb{1}_d \ \text{ and } \ |x-y| \leq \delta \implies \tilde{u}(x) - v_\theta(y) \leq \frac{\eps}{2},\]
for every $x,y \in \R^d_+$.
Now let $x\in D$ and $y \in \R^d_+$ with $|x-y| \leq \delta$, and set
\begin{align*}
\hat{x} &= (\min(x_1,z_{0,1}+r^2),\dots,\min(x_d,z_{0,d}+r^2)) \\
\hat{y} &= (\min(y_1,z_{0,1}+r^2),\dots,\min(y_d,z_{0,d}+r^2)).
\end{align*}
Then $\hat{x} \in D$, $\hat{x} \leqq z_0 + r^2\vb{1}_d$, $|\hat{x}-\hat{y}| \leq |x-y| \leq \delta$, $\tilde{u}(x) = \tilde{u}(\hat{x})$ and $v_\theta(y) \geq v_\theta(\hat{y})$.  We conclude that 
\[\tilde{u}(x) - v_\theta(y) \leq \tilde{u}(\hat{x}) - v_\theta(\hat{y}) \leq \frac{\eps}{2}.\]
Hence we have shown that for any $x,y\in\R^d_+$
\begin{equation}\label{eq:boundary-comp}
x \in D \ \text{ and } \ |x-y| \leq \delta \implies \tilde{u}(x) - v_\theta(y) \leq \frac{\eps}{2}.
\end{equation}

Fix $\alpha > 0$ large enough so that $M_\alpha > \eps/2$ and let $(x,y) \in \R^d_+ \times \R^d_+$ such that $\Phi_\alpha(x,y) > \eps/2$.  Then we have
\[\tilde{u}(x) - v(y) - \alpha\left| x-y-\frac{1}{\sqrt{\alpha}} \eta \right|^2 > \frac{\eps}{2}.\]
In particular, we have $\tilde{u}(x) - v_\theta(y) > \frac{\eps}{2}$, and
\[|x-y| \leq \frac{1}{\sqrt{\alpha}}\left( \left|\|\tilde{u}\|_{L^\infty(\R^d_+)} - v_\theta(0)\right|^\frac{1}{2} + 1\right).\]
Taking $\alpha $ large enough so that $|x-y| \leq \delta$, we see from \eqref{eq:boundary-comp} that $x \geqq z_0 - r\vb{1}_d$.
Now set
\[\hat{x} = (\min(x_1,z_{0,1}+r^2),\dots,\min(x_d,z_{0,d}+r^2)),\]
and $\hat{y} = y + \hat{x} - x$.  Then we have $\hat{x} \in \bar{H}$, $\hat{x}-\hat{y} = x-y$, $\tilde{u}(\hat{x})=\tilde{u}(x)$, and $v_\theta(\hat{y}) \leq v_\theta(y)$.  It follows that $\Phi_\alpha(\hat{x},\hat{y}) \geq \Phi_\alpha(x,y)$.  Hence for $\alpha>0$ large enough, $\Phi_\alpha$ attains a global maximum at $(x_\alpha,y_\alpha) \in \R^d_+\times \R^d_+$ satisfying $x_\alpha \in \bar{H}$ and $|x_\alpha - y_\alpha| \leq C/\sqrt{\alpha}$.  

Sending $\alpha\to \infty$ and extracting a subsequence, if necessary, we may assume that
\[x_\alpha \to x_0 \ \text{ and } \ y_\alpha \to x_0 \ \text{ as } \ \alpha \to \infty,\]
for some $x_0 \in \bar{H}$.
Since 
\[\tilde{u}(x_0) - v_\theta\left( x_0- \frac{1}{\sqrt{\alpha}} \eta\right) \leq M_\alpha \leq \tilde{u}(x_\alpha) - v_\theta(y_\alpha),\]
we see, by the continuity of $\tilde{u}$ and $v_\theta$, that
\[\lim_{\alpha\to\infty} M_\alpha = \tilde{u}(x_0) - v_\theta(x_0).\]
It follows that
\begin{equation}\label{eq:keybound-comp2}
\alpha\left|x_\alpha - y_\alpha - \frac{1}{\sqrt{\alpha}} \eta\right|^2 \to 0 \ \text{ as } \ \alpha \to \infty.
\end{equation} 

Let
\[p = 2\alpha\left(x_\alpha-y_\alpha - \frac{1}{\sqrt{\alpha}} \eta \right).\]
Since $(x_\alpha,y_\alpha) \in \R^d_+ \times \R^d_+$ is a local max of $\Phi_\alpha$, we have $p \in D^+ \tilde{u}(x_\alpha)\cap D^- v_\theta(y_\alpha)$.  Since $\tilde{u}$ is a truncatable, we have
\[p_1\cdots p_d \leq (\widehat{f}\;)^*(x_\alpha)  \leq f^*(x_\alpha),\]
where $\widehat{f}(x) = f^*(x)$ for $x \leqq z_0+r^2\vb{1}_d$ and $\widehat{f}(x)=0$ otherwise.  By \eqref{eq:strict-super} we have
\[p_1\cdots p_d \geq f_*(y_\alpha) + \theta.\]
Combining these we see that
\begin{equation}\label{eq:fcont-comp2}
f^{*}(x_\alpha) - f_{*}(y_\alpha) \geq \theta.
\end{equation}

If $x_0 \not\in \partial \Omega$, then for $\alpha$ large enough we have either $x_\alpha,y_\alpha \in \Omega$ or $x_\alpha,y_\alpha \in \R^d_+ \setminus \bar{\Omega}$.  Hence by (H1) we have
\begin{equation}\label{eq:key-eq}
f^*(x_\alpha) - f_*(y_\alpha) \leq m(|x_\alpha - y_\alpha|).
\end{equation}
Suppose now that $x_0 \in \partial \Omega$.  We have two cases; either  (1) $y_\alpha \in \Omega$ or (2) $y_\alpha \in \R^d_+\setminus \Omega$.  

Case 1. In this case we have $f_*(y_\alpha)=f(y_\alpha)$.  If $x_\alpha \in \bar{\Omega}$ then $f^*(x_\alpha)=f(x_\alpha)$ and hence \eqref{eq:key-eq} holds.
If $x_\alpha \in \R^d_+ \setminus \bar{\Omega}$ then $f^*(x_\alpha)=0$ and \eqref{eq:key-eq} holds trivially.

Case 2.  Set $w_\alpha = x_\alpha - y_\alpha - \frac{1}{\sqrt{\alpha}} \eta$ and note that
\[x_\alpha = y_\alpha + \frac{1}{\sqrt{\alpha}}(\eta + \sqrt{\alpha} w_\alpha).\]
By \eqref{eq:keybound-comp2} we have $\sqrt{\alpha}w_\alpha \to 0$ as $\alpha \to \infty$.  Since $\eta$ is in the interior of $\K_{z_0}$, we see that $\eta + \sqrt{\alpha}w_\alpha \in \K_{z_0}$ for $\alpha$ large enough, and hence $x_\alpha \in y_\alpha + \K_{z_0}$.  
We can take $r>0$ smaller, if necessary, so that $\bar{H} \in V_{z_0}$.  Since $x_\alpha \in \bar{H}$, we can choose $\alpha$ large enough so that $y_\alpha \in V_{z_0}$.  Since $x_\alpha \neq y_\alpha$ for $\alpha$ large enough, we have by (H2$^*$) that $x_\alpha \in \R^d_+\setminus \bar{\Omega}$ and hence $f^*(x_\alpha) = 0$ and \eqref{eq:key-eq} holds.
Sending $\alpha \to 0$ in \eqref{eq:key-eq} contradicts \eqref{eq:fcont-comp2}, hence $u \leq v_\theta$ on $\R^d_+$.  Sending $\theta \to 0$ we find that $u\leq v$ on $\R^d_+$.  
\end{proof}

In order to prove a general uniqueness result, Theorem \ref{thm:uniq}, we require a perturbation result for the value function $U$ with respect to sup and inf convolutions of the density $f$.  Since a similar result, for a different type of perturbation, is required in the proof of our main result, Theorem \ref{thm:linfty-conv}, we state a more general result in Lemma \ref{lem:l1-pert}.  We first recall some notation standard in the theory of viscosity solutions.  For a sequence of bounded functions $f_n: \R^d_+ \to \R$, the upper and lower limits are defined by
\[\limsups{n \to \infty}   f_n(x) := \lim_{j\to \infty} \sup\left\{f_n(y) \, : \, n\geq j, \ y \in \R^d, \ \text{ and } \ |x-y| \leq \frac{1}{j}\right\},\]
and
\[\liminfs{n \to \infty}   f_n(x) := \lim_{j\to \infty} \inf\left\{f_n(y) \, : \, n\geq j, \ y \in \R^d, \ \text{ and } \ |x-y| \leq \frac{1}{j}\right\}.\]
\begin{lemma}\label{lem:l1-pert}
Let $\{f_n\}_{n=1}^\infty \subset \B$ and suppose that $\Omega$ satisfies (H2$^*$), $f$ satisfies (H1), and 
\begin{equation}\label{eq:fn-cond}
f_* \leq \liminfs{n \to \infty} f_n \  \text{ and }  \ \limsups{n\to \infty} f_n \leq f^*.
\end{equation}
For each $n$, set 
\[v_n(x) =\sup_{\gamma \in \A \, : \, \gamma \leqq x} \int_0^1 f_n(\gamma(t))^\frac{1}{d} (\gam{1}'(t) \cdots \gam{d}'(t))^\frac{1}{d} \, dt.\]
Then $v_n \to U$ uniformly where $U$ is the value function given by \eqref{eq:vardef}. 
\end{lemma}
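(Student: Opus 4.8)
The plan is to use the method of half-relaxed limits (Barles--Perthame) together with the comparison principle of Theorem~\ref{thm:comparison2}. Throughout I identify $v_n$ with the value function for the density $f_n$, so that by Lemma~\ref{lem:holder}, Theorem~\ref{thm:hjb} and Remark~\ref{rem:truncation} each $v_n$ is H\"older-$\tfrac1d$ with seminorm $[v_n]_{1/d}\leq\|f_n\|_{L^\infty}^{1/d}$, is Pareto-monotone, vanishes on $\R^d\setminus\R^d_+$, satisfies the projection identity $v_n(x)=v_n(\min(x_1,1),\dots,\min(x_d,1))$, is truncatable, and is a viscosity solution of $v_{n,x_1}\cdots v_{n,x_d}=\tfrac1{d^d}f_n$ on $\R^d_+$. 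The first step is to upgrade these to \emph{uniform} bounds: since $\limsups{n\to\infty}f_n\leq f^*$ and $f^*$ is bounded, $\{f_n\}$ must be eventually uniformly bounded (otherwise $f_{n_k}(x_k)\to\infty$ along a subsequence with $x_k\to x_0\in[0,1]^d$, forcing $\limsups{n\to\infty}f_n(x_0)=+\infty$), so $[v_n]_{1/d}$ is uniformly bounded; combined with $v_n(0)=0$ and the projection identity this makes $\{v_n\}$ uniformly bounded and uniformly equicontinuous.

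Next I would pass to the half-relaxed limits $\bar v=\limsups{n\to\infty}v_n$ and $\underline v=\liminfs{n\to\infty}v_n$. Uniform equicontinuity forces these to coincide with the pointwise upper and lower limits of $v_n$ and to be continuous, Pareto-monotone, zero off $\R^d_+$, and projection-invariant. By the standard stability theorem for viscosity solutions, $\bar v$ is a subsolution of $\bar v_{x_1}\cdots\bar v_{x_d}=\tfrac1{d^d}\limsups{n\to\infty}f_n$ and $\underline v$ a supersolution of $\underline v_{x_1}\cdots\underline v_{x_d}=\tfrac1{d^d}\liminfs{n\to\infty}f_n$ on $\R^d_+$ (here one uses that $\{f_n\}$ and $\{f_n^*\}$, resp.\ $\{(f_n)_*\}$, have the same relaxed upper, resp.\ lower, limit); since $\limsups{n\to\infty}f_n\leq f^*$ and, by (H1) and openness of $\Omega$, $\liminfs{n\to\infty}f_n\geq f_*=f$ on $\R^d_+$, both $\bar v$ and $\underline v$ are, respectively, a subsolution and a supersolution of $u_{x_1}\cdots u_{x_d}=\tfrac1{d^d}f$ on $\R^d_+$ (the density $\tfrac1{d^d}f$ still satisfies (H1), or one may simply rescale $u\mapsto d\,u$). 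I also need $\bar v$ to be truncatable: for fixed $z\in\R^d_+$ the truncations $\tilde v_n$ are, by Remark~\ref{rem:truncation}, subsolutions for the density $\widehat{f}_n:=f_n^*\,\mathbf 1_{\{x\leqq z\}}$; applying the stability theorem again (the $\tilde v_n$ remain equicontinuous, so $\limsups{n\to\infty}\tilde v_n=\widetilde{\bar v}$) and checking the inequality $\limsups{n\to\infty}\widehat{f}_n\leq\widehat f$ with $\widehat f=f^*\,\mathbf 1_{\{x\leqq z\}}$ — which follows from $\limsups{n\to\infty}f_n\leq f^*$ after separating the cases $x<z$, $x\leqq z$ with $x_i=z_i$ for some $i$, and $x\not\leqq z$ — shows $\widetilde{\bar v}$ is a subsolution for $\widehat f$, i.e.\ $\bar v$ is truncatable.

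Now I would apply Theorem~\ref{thm:comparison2} twice. With subsolution $\bar v$ (truncatable) and supersolution $U$ (Pareto-monotone), and $\bar v=0=U$ on $\partial\R^d_+$, it yields $\bar v\leq U$ on $\R^d_+$, hence on all of $\R^d$. With subsolution $U$ (truncatable by Remark~\ref{rem:truncation}) and supersolution $\underline v$ (Pareto-monotone), and $U=0=\underline v$ on $\partial\R^d_+$, it yields $U\leq\underline v$. Since $\underline v\leq\bar v$ pointwise, we obtain $\bar v=\underline v=U$. Finally, uniform boundedness and equicontinuity of $\{v_n\}$ make the sequence precompact in $C([0,1]^d)$ by Arzel\`a--Ascoli, and every uniformly convergent subsequence has limit squeezed between $\underline v$ and $\bar v$, hence equal to $U$; together with the projection identity this gives $v_n\to U$ uniformly on $\R^d$.

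The main obstacle I anticipate is the verification that $\bar v$ is truncatable, namely matching the relaxed limit of the truncated densities $\widehat{f}_n$ with the density $\widehat f$ of Definition~\ref{def:trunc} (the delicate point being the behaviour on $\{x\leqq z\}\cap\{x_i=z_i\}$), since this is exactly what lets Theorem~\ref{thm:comparison2} substitute for a Lipschitz hypothesis on the supersolution. The remaining ingredients — the $d^{-d}$ bookkeeping and the passage from $[0,1]^d$ to the unbounded domain $\R^d$ via the shared projection identity — are routine.
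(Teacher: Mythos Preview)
Your proposal is correct and follows essentially the same route as the paper's own proof: uniform boundedness of $\{f_n\}$ from \eqref{eq:fn-cond}, uniform H\"older equicontinuity of $\{v_n\}$ via Lemma~\ref{lem:holder}, stability of viscosity solutions, the truncatability check for the limiting object (which you correctly identify as the delicate step, and which the paper handles by exactly the same case analysis on $\widehat{f}_n$), and then Theorem~\ref{thm:comparison2} to pin the limit as $U$. The only cosmetic difference is that the paper extracts convergent subsequences via Arzel\`a--Ascoli and shows each subsequential limit $v$ is simultaneously a truncatable subsolution and a Pareto-monotone supersolution (so one invokes Theorem~\ref{thm:comparison2} in both directions against $U$), whereas you work with the half-relaxed limits $\bar v,\underline v$ and sandwich $U$ between them; since equicontinuity is already available, the two presentations are equivalent.
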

\begin{proof}
We claim that $\{f_n\}_{n=1}^\infty$ is a uniformly bounded sequence.  To see this, suppose to the contrary that there exists a sequence $x_n$ in $[0,1]^d$ such that $f_n(x_n) \to \infty$ as $n \to \infty$.  By passing to a subsequence, if necessary, we may assume that $x_n \to x_0 \in [0,1]^2$ as $n\to \infty$.  By the definition of the upper limit and \eqref{eq:fn-cond}, we have
\[f^*(x_0) \geq \limsups{n\to \infty} f_n(x_0) = \infty,\]
which contradicts the assumption that $f$ satisfies (H1) and establishes the claim.

Since $\{f_n\}_{n=1}^\infty$ is uniformly bounded, there exists (by Lemma \ref{lem:holder}) a constant $C$ such that $[v_n]_{\frac{1}{d}} \leq C$ for all $n$.  The sequence $v_n$ is therefore bounded and equicontinuous, and by the Arzela-Ascoli theorem there exists a subsequence $v_{n_k}$ and a H\"older-continuous function $v:\R^d \to \R$ such that $v_{n_k} \to v$ uniformly on compact sets in $\R^d$ as $k\to \infty$.  By Theorem \ref{thm:hjb} (i), (ii), we conclude that the convergence is actually uniform on $\R^d$.   By Theorem \ref{thm:hjb}, each $v_n$ is a Pareto-montone truncatable viscosity solution of 
\[v_{n,x_1} \cdots v_{n,x_d} =\frac{1}{d^d} f_n \ \ \text{ on } \ \  \R^d_+.\]
By standard results on viscosity solutions (see \cite[Remark 6.3]{crandall1992}) and \eqref{eq:fn-cond}, we have that $v$ is a Pareto-monotone viscosity solution of
\[v_{x_1} \cdots v_{x_d} =\frac{1}{d^d} f \ \ \text{ on } \ \  \R^d_+.\]
By the assumption that $\supp(f_n) \subset [0,1]^d$, we have that $v_n(x) = 0$ for all $x \not\in \R^d_+$, hence $v(x) = 0$ for all $x \not\in \R^d_+$.  

We claim that $v$ is truncatable. To see this, fix $z \in \R^d_+$ and define $\tilde{v}$, $\tilde{v}_n$, $\widehat{f}$ and $\widehat{f}_n$ as in Remark \ref{rem:truncation}.  Since $v_n$ is truncatable,  $\tilde{v}_n$ is a viscosity solution of
\[\tilde{v}_{n,x_1}\cdots \tilde{v}_{n,x_d} = \frac{1}{d^d} \widehat{f}_{n} \ \ \text{ on } \ \  \R^d_+.\]
By the definition of $\widehat{f}_n$, we have $\widehat{f}_n \leq f^*_n$, with $\widehat{f}_n(x) = f^*_n(x)$ for $x\leqq z$.  It follows that
\[\limsups{n\to\infty} \widehat{f}_n(x) \leq \limsups{n\to \infty} f_n^*(x) \leq f^*(x)=\widehat{f}(x)=(\widehat{f}\;)^*(x) \  \text{ for }  \ x \leqq z.\]
For $x \not\leqq z$, there exists a neighborhood $V$ of $x$ on which $\widehat{f}_n$ is identically zero for all $n$.  It follows that
\[\limsups{n\to\infty} \widehat{f}_n(x)  = 0 = (\widehat{f}\;)^*(x),\]
and therefore $\limsups{n\to \infty} \widehat{f}_n \leq (\widehat{f}\;)^*$.  
Since $\tilde{v}_{n_k} \to \tilde{v}$ uniformly, we can again apply standard results on viscosity solutions~\cite{crandall1992} to find that $\tilde{v}$ is a viscosity subsolution of
\[\tilde{v}_{x_1}\cdots \tilde{v}_{x_d} = \frac{1}{d^d}\widehat{f} \ \ \text{ on } \ \  \R^d_+,\]
which proves the claim.  

By Theorem \ref{thm:comparison2} we have $v=U$ on $\R^d_+$.  Since $U(x)=v(x) = 0$ for $x \not\in \R^d_+$ we have $v=U$ on $\R^d$.  The above argument can be used to show that every subsequence of $v_n$ contains a uniformly convergent subsequence converging to $U$.  It follows that $v_n \to U$ uniformly in $\R^d$ as $ n\to \infty$.
\end{proof}

We now establish uniqueness of viscosity solutions of \eqref{eq:hjb-main}.
\begin{theorem}\label{thm:uniq}
Suppose that $\Omega$ satisfies (H2$^*$) and $f$ satisfies (H1). Then there exists a unique Pareto-monotone viscosity solution $u$ of 
\begin{equation}\label{eq:pde-uniq}
\begin{cases}
u_{x_1} \cdots  u_{x_d} = f & \text{ on } \R^d_+ \\
u= 0 & \text{ on } \partial \R^d_+,
\end{cases}
\end{equation}
satisfying the additional boundary condition
\begin{equation}\label{eq:bdy}
u(x) = u(\min(x_1,1),\dots,\min(x_d,1)) \ \ \text{ for } \ \ x \in \R^d_+.
\end{equation}
\end{theorem}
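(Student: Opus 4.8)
The plan is to get existence from the value function $U$ of \eqref{eq:vardef} and uniqueness by squeezing an arbitrary Pareto-monotone viscosity solution between value functions built from sup- and inf-convolutions of $f$, using the elementary comparison principle Theorem \ref{thm:comparison} for the squeeze and the perturbation result Lemma \ref{lem:l1-pert} to close the gap. For existence I would simply invoke Theorem \ref{thm:hjb}: the value function $U$ of \eqref{eq:vardef} (I write the PDE as in \eqref{eq:hjb-main}/\eqref{eq:hjb}; the form in \eqref{eq:pde-uniq} is the same equation up to the harmless rescaling $f\mapsto d^d f$, which I suppress) is a Pareto-monotone viscosity solution of the PDE, it vanishes on $\partial\R^d_+$ by Theorem \ref{thm:hjb}(ii), and it satisfies the boundary condition \eqref{eq:bdy} by Theorem \ref{thm:hjb}(i) with $z=(1,\dots,1)$, using $\supp(f)\subset[0,1]^d$.

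For uniqueness, let $u$ be any Pareto-monotone viscosity solution of \eqref{eq:pde-uniq}--\eqref{eq:bdy}. For $\eps>0$ I would introduce the sup- and inf-convolutions $f^\eps\ge f\ge f_\eps$ of $f$; these are Lipschitz (hence uniformly continuous), nonnegative, bounded by $\|f\|_{L^\infty}$, and lie in $\B$ (with a fixed cube $[0,R]^d$ in place of $[0,1]^d$ absorbing the slight spreading of $\supp(f^\eps)$). Since $\Omega$ is open, (H1) forces $f=f_*$ on $\R^d_+$, so $u$ is a viscosity subsolution of $u_{x_1}\cdots u_{x_d}=f^\eps$ (because $f^*\le f^\eps$) and a viscosity supersolution of $u_{x_1}\cdots u_{x_d}=f_\eps$ (because $f_\eps\le f=f_*$). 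Let $U^\eps$, $U_\eps$ be the value functions \eqref{eq:vardef} formed from $f^\eps$, $f_\eps$; by Theorem \ref{thm:hjb} each is a Pareto-monotone viscosity solution of its equation obeying the projection property \eqref{eq:projection} and vanishing on $\partial\R^d_+$. Two applications of Theorem \ref{thm:comparison} --- first with $u$ as subsolution (it obeys \eqref{eq:projection}) and the Pareto-monotone $U^\eps$ as supersolution, then with $U_\eps$ as subsolution (it obeys \eqref{eq:projection}) and the Pareto-monotone $u$ as supersolution, using $u=U^\eps=U_\eps=0$ on $\partial\R^d_+$ --- then yield
\[
U_\eps \le u \le U^\eps \qquad\text{on } \R^d_+.
\]
Using $[0,R]^d$ in place of $[0,1]^d$ throughout is harmless, since $u$'s projection at level $1$ upgrades automatically to level $R\ge 1$ and Theorems \ref{thm:comparison}, \ref{thm:hjb} and Lemma \ref{lem:l1-pert} are insensitive to the choice of cube.

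Finally, picking $\eps_n\downarrow 0$, one has $f^{\eps_n}\downarrow f^*$ and $f_{\eps_n}\uparrow f_*$ pointwise, so a routine check with the semicontinuous envelopes shows that both sequences satisfy the hypothesis \eqref{eq:fn-cond} of Lemma \ref{lem:l1-pert}; hence $U^{\eps_n}\to U$ and $U_{\eps_n}\to U$ uniformly. Letting $n\to\infty$ in the sandwich gives $u=U$ on $\R^d_+$, and $u=0=U$ off $\R^d_+$ by the boundary conditions, so $u=U$ --- and since $U$ is truncatable by Remark \ref{rem:truncation}, so is every such $u$. The main obstacle is precisely that the comparison principle for discontinuous $f$ (Theorem \ref{thm:comparison2}) demands a \emph{truncatable} subsolution, a property not available for a general solution $u$; the sandwiching device feeds $u$ only into the cheap comparison Theorem \ref{thm:comparison} (which asks of $u$ nothing beyond the projection property it already has), while the expensive Theorem \ref{thm:comparison2}, invoked inside Lemma \ref{lem:l1-pert}, is applied exclusively to the value functions $U_\eps,U^\eps$, which are truncatable by construction (Remark \ref{rem:truncation}). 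What remains is bookkeeping with the envelopes $f_*,f^*$, with nonnegativity and supports of the convolutions, and with the normalizing constant.
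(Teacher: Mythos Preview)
Your proposal is correct and follows essentially the same approach as the paper's proof: existence via Theorem \ref{thm:hjb}, then sandwiching an arbitrary solution $u$ between the value functions $U_\eps$ and $U^\eps$ built from the Lipschitz inf/sup convolutions $f_\eps,f^\eps$ using the elementary comparison Theorem \ref{thm:comparison}, and closing the gap with Lemma \ref{lem:l1-pert}. The only cosmetic differences are that the paper handles the support issue by assuming without loss of generality $\bar\Omega\subset(0,1)^d$ (so $\supp(f^\eps)\subset[0,1]^d$ for small $\eps$) rather than enlarging the cube to $[0,R]^d$, and it keeps the normalization explicit, writing $u=d\cdot U$.
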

\begin{proof}
By Theorem \ref{thm:hjb} there exists a H\"older-continuous Pareto-monotone viscosity solution $u$ of \eqref{eq:pde-uniq}.  To prove uniqueness, we will show that $u=d\cdot U$, where $U$ is the value function defined by \eqref{eq:vardef}.  Let $\eps>0$ and consider the inf and sup convolutions of $f$, defined for $x \in \R^d_+$ by
\[f_\eps(x) = \inf_{y \in \R^d_+} \left\{ f(y) + \frac{1}{\eps} |x-y| \right\}\ \ \text{ and } \ \ f^\eps(x) = \sup_{y \in \R^d_+} \left\{f(y) - \frac{1}{\eps}|x-y| \right\}.\]
Recall that $f_\eps$ and $f^\eps$ are Lipschitz continuous with constant $1/\eps$ and $f_\eps \leq f \leq f^\eps$.  
Without loss of generality, we may assume that $\bar{\Omega} \subset (0,1)^d$, and hence for $\eps > 0$ small enough, we have $\supp(f_\eps), \supp(f^\eps) \subset [0,1]^d$.  For $x \in \bar{\R^d_+}$, set
\[U^\eps(x) = \sup_{\gamma \in \A \, : \, \gamma \leqq x} \int_0^1 f^\eps(\gamma(t))^\frac{1}{d} (\gamma_1'(t) \cdots \gamma_d'(t))^\frac{1}{d} \, dt,\]
and
\[U_\eps(x) = \sup_{\gamma \in \A \, : \, \gamma \leqq x} \int_0^1 f_\eps(\gamma(t))^\frac{1}{d} (\gamma_1'(t) \cdots \gamma_d'(t))^\frac{1}{d} \, dt.\]
  By Theorem \ref{thm:hjb}, $d\cdot U^\eps$ is a viscosity solution of
\begin{equation}\label{eq:feps-pde}
v_{x_1} \cdots v_{x_d} = f^\eps \ \ \text{ on } \ \ \R^d_+,
\end{equation}
and satisfies the boundary condition \eqref{eq:bdy}.
Since $f \leq f^\eps$ and $u$ is a viscosity solution of \eqref{eq:pde-uniq}, we see that $u$ is a viscosity subsolution of \eqref{eq:feps-pde}.  Since $u=U = 0$ on $\partial  \R^d_+$ and $u$ satisfies \eqref{eq:bdy}, we can apply Theorem \ref{thm:comparison} to find that $u \leq d\cdot U^\eps$.  By a similar argument, we have that $u \geq d\cdot U_\eps $. Since $f_\eps,f^\eps \in \B$ and \eqref{eq:fn-cond} is satisfied for the sequences $\{f^\eps\}_{\eps>0}$ and $\{f_\eps\}_{\eps>0}$, we have by Lemma \ref{lem:l1-pert} that $U_\eps,U^\eps \to U$ uniformly in $\R^d$ as $\eps \to 0$, and hence $u=d\cdot U$.
\end{proof}

\section{Large sample asymptotics of $u_n$}
\label{sec:proof}

The proof of Theorem \ref{thm:linfty-conv} is split into several steps.  In Section \ref{sec:piecewise}, we prove a basic convergence result for piecewise constant density functions, which is a generalization of the results of Deuschel and Zeitouni~\cite{deuschel1995}.  In Section \ref{sec:cont}, we extend the convergence result to densities that are continuous on $\Omega$ and vanish on $\R^d\setminus \Omega$ by considering a sequence of piecewise constant approximations to $f$, applying the results from Section \ref{sec:piecewise}, and passing to the limit.  This requires a perturbation result for the energy $J$, which we obtained from the comparison principle for the associated Hamilton-Jacobi PDE \eqref{eq:hjb-main} in Lemma \ref{lem:l1-pert}.

\subsection{Piecewise constant densities}
\label{sec:piecewise}

We aim to prove a basic convergence result for piecewise constant densities here.
The proof is split into a lower bound, Theorem \ref{thm:lower-bound}, and an upper bound, Theorem \ref{thm:upper-bound}. We should note that the techniques used here are similar to those used by Deuschel and Zeitouni~\cite{deuschel1995}, who showed the same convergence result for $C^1$ densities on the unit hypercube in dimension $d=2$.  

Let us introduce some notation. For a finite set $S\subset \R^d$, let $\ell(S)$ denote the length of a longest increasing chain in $S$. The set function $\ell$ has an important invariance.  If $\Psi: \R^d \to \R^d$ is a mapping that preserves the partial order $\leqq$, i.e., $x \leqq y \iff \Psi(x) \leq \Psi(y)$, then
\begin{equation}\label{eq:ell-invariant}
\ell(S) = \ell(\Psi(S)) {\rm \ for \ any \ } S \subset \R^d.
\end{equation}
For $A\subset \R^d$ we denote by $\chi_{A}:\R^d\to \R$ the characteristic function of the set $A$, which takes the value $1$ on $A$ and $0$ on $\R^d\setminus A$.  When $A$ is Lebesgue measurable, we denote by $|A|$ the Lebesgue measure of $A$.   We set $\vb{0}_d=(0,\dots,0)\in \R^d$ and $\vb{1}_d = (1,\dots,1)\in \R^d$.  Given an integer $L$, we partition $[0,1)^d$ into $L^d$ hypercubes of side length $1/L$.  More precisely, for a multiindex $\alpha \in \N^L$ with $\|\alpha\|_\infty \leq L$, where $\|\alpha\|_\infty = \max(\alpha_1,\dots,\alpha_L)$, we set
\begin{equation}
Q_{L,\alpha} = \{x \in [0,1)^d \, : \,  \alpha - \vb{1}_d \leqq Lx < \alpha \}.
\end{equation}
We say that $f:[0,1)^d \to [0,\infty)$ is $L$-\emph{piecewise constant} if $f$ is constant on $Q_{L,\alpha}$ for all $\alpha$. 
If $f$ is $L$-piecewise constant then $f$ is $kL$-piecewise constant for all $k\in \N$.  
For convenience, we also set
\[\bar{J} = \sup_{\gamma \in \A} J(\gamma) = \sup_{\gamma \in \A} \int_0^1 f(\gamma(t))^\frac{1}{d} (\gam{1}'(t) \cdots \gam{d}'(t))^\frac{1}{d} \, dt.\]
We now establish an asymptotic lower bound on $\ell(\{X_1,\dots,X_n\})$.  
\begin{theorem}\label{thm:lower-bound}
Let $f:[0,1)^d \to [0,\infty)$ be $L$-piecewise constant, and let $X_1,\dots,X_n$ be \iid~with density $f$.  Then
\[\liminf_{n\to \infty}\ n^{-\frac{1}{d}} \ell\left(\{X_1,\dots,X_n\}\right) \geq c_d \bar{J} \almostsurely\]
\end{theorem}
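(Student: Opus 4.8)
The plan is to reduce the statement about an $L$-piecewise constant density to a collection of statements about the uniform distribution on subcubes, for which the Bollob\'as--Winkler result $n^{-1/d}\ell(n) \to c_d$ a.s.\ applies directly. The strategy is: fix a competitor path $\gamma \in \A$ with $J(\gamma)$ close to $\bar J$; chop it into a chain of small boxes that it passes through monotonically; in each box use the Bollob\'as--Winkler asymptotics to produce a long chain among the sample points lying in that box; and concatenate these chains. Since the boxes are ordered along $\gamma$ (each lies entirely $\leqq$ the next, up to a negligible overlap correction), concatenation is legitimate and the lengths add.

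In more detail, first I would fix $\eps > 0$ and choose $\gamma \in \A$ with $J(\gamma) \geq \bar J - \eps$; by reparametrization and the density of nice paths I may take $\gamma$ to be, say, piecewise linear and strictly increasing, staying in $[0,1]^d$. Next, partition $[0,1)^d$ into the cubes $Q_{M,\beta}$ of side $1/M$ for a large refinement parameter $M$ (a multiple of $L$, so $f$ is still piecewise constant at this scale). The path $\gamma$ visits an ordered sequence of such cubes $Q^{(1)} \leqq Q^{(2)} \leqq \cdots \leqq Q^{(K)}$ — more precisely, by shrinking each cube slightly to a concentric sub-box $\hat Q^{(j)}$ and using monotonicity of $\gamma$, one arranges $\hat Q^{(j)} \leqq \hat Q^{(j+1)}$ honestly. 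Inside cube $Q^{(j)}$, where $f$ takes a constant value $f_j$, the number of sample points $n_j$ falling in $\hat Q^{(j)}$ satisfies $n_j \approx n f_j |\hat Q^{(j)}|$ by the strong law; conditioned on $n_j$, those points are i.i.d.\ uniform on $\hat Q^{(j)}$, so by the order-preserving invariance \eqref{eq:ell-invariant} (rescaling $\hat Q^{(j)}$ to the unit cube) and Bollob\'as--Winkler, the longest chain among them is $\approx c_d n_j^{1/d} \approx c_d (n f_j |\hat Q^{(j)}|)^{1/d}$ almost surely. Concatenating over $j$ gives
\[
\ell(\{X_1,\dots,X_n\}) \geq \sum_{j=1}^K \ell(\{X_i \in \hat Q^{(j)}\}) \gtrsim c_d\, n^{1/d} \sum_{j=1}^K \big(f_j |\hat Q^{(j)}|\big)^{1/d}.
\]
Finally I would recognize the right-hand sum as a Riemann-type lower sum for $J(\gamma)$: since along $\gamma$ the box $\hat Q^{(j)}$ has side lengths comparable to the increments $\gamma(t_{j+1})-\gamma(t_j)$, one gets $\sum_j (f_j |\hat Q^{(j)}|)^{1/d} \geq J(\gamma) - o(1) \geq \bar J - \eps - o(1)$ as $M \to \infty$. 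Taking $n \to \infty$ first, then $M \to \infty$, then $\eps \to 0$ yields the claim. A mild technical point: since the a.s.\ convergence is invoked only finitely many times (one event per cube $\hat Q^{(j)}$, and countably many over the rational choices of $M$, $\eps$, and a countable dense family of paths $\gamma$), the exceptional null sets union to a null set, so the bound holds almost surely uniformly in the limiting procedure.

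The main obstacle I anticipate is the bookkeeping around the cube-shrinking and the boundary/overlap corrections: one must ensure simultaneously that (i) the $\hat Q^{(j)}$ are genuinely ordered under $\leqq$ so the chains concatenate, (ii) the total volume lost in passing from $Q^{(j)}$ to $\hat Q^{(j)}$ and from the full path to the cube-chain is negligible as $M \to \infty$, and (iii) the Riemann-sum comparison $\sum_j (f_j|\hat Q^{(j)}|)^{1/d} \to J(\gamma)$ survives these perturbations — this uses the concavity/continuity of the integrand and the $C^1$ (or piecewise-linear) regularity of $\gamma$. A secondary subtlety is handling cubes where $\gamma$ merely grazes a corner, contributing tiny side lengths; these contribute negligibly to the sum and can be discarded. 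None of these is deep, but getting the order of limits and the uniformity of the a.s.\ statements right is where the care is needed.
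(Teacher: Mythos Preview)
Your overall strategy---fix a near-optimal $\gamma$, cover it by ordered boxes on which $f$ is constant, apply Bollob\'as--Winkler in each box, and concatenate---is exactly right and is what the paper does. But your specific box construction has a flaw, and the paper's version of it is both correct and simpler.

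The problem is that the grid cubes $Q_{M,\beta}$ visited consecutively by $\gamma$ are \emph{not} in general $\leqq$-comparable: a monotone path can pass from one cube to a face-adjacent neighbour (e.g.\ from $[0,1/M)^d$ to $[1/M,2/M)\times[0,1/M)^{d-1}$), and no concentric shrinking makes such neighbours comparable. So the concatenation step fails as written. Your later remark that ``$\hat Q^{(j)}$ has side lengths comparable to the increments $\gamma(t_{j+1})-\gamma(t_j)$'' already hints at the right object, which is not a grid cube at all.

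The paper's fix is to let $s_0<s_1<\cdots<s_{k+1}$ be the times when $\gamma$ crosses the grid $\bigcup_\alpha \partial Q_{L,\alpha}$ (using the original $L$-grid, no refinement), and to take
\[
R_j=\{x:\gamma(s_j)\leqq x<\gamma(s_{j+1})\}.
\]
These rectangles are automatically $\leqq$-ordered (since $R_j\ni x<\gamma(s_{j+1})\leqq y\in R_{j+1}$), and each lies inside a single $Q_{L,\alpha}$, so $f$ is constant on $R_j$. This also removes the need for your refinement parameter $M$ and the Riemann-sum limit: with $p_j=\int_{R_j}f=f(\gamma(s_j))\,|R_j|$, a single application of the generalized H\"older inequality on each $I_j=[s_j,s_{j+1})$ gives
\[
J(\gamma)=\sum_j f(\gamma(s_j))^{1/d}\int_{I_j}(\gamma_1'\cdots\gamma_d')^{1/d}\,dt
\;\leq\;\sum_j f(\gamma(s_j))^{1/d}|R_j|^{1/d}=\sum_j p_j^{1/d},
\]
so there is nothing to approximate. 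The rest---Bollob\'as--Winkler in each $R_j$ via an order-preserving affine map, $n_j/n\to p_j$ a.s., concatenation, and $\eps\to0$---is exactly as you describe, with only finitely many (at most $dL+1$) boxes and hence no countability gymnastics needed for the a.s.\ conclusion.
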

\begin{figure}
\centering
\includegraphics[width=0.6\textwidth]{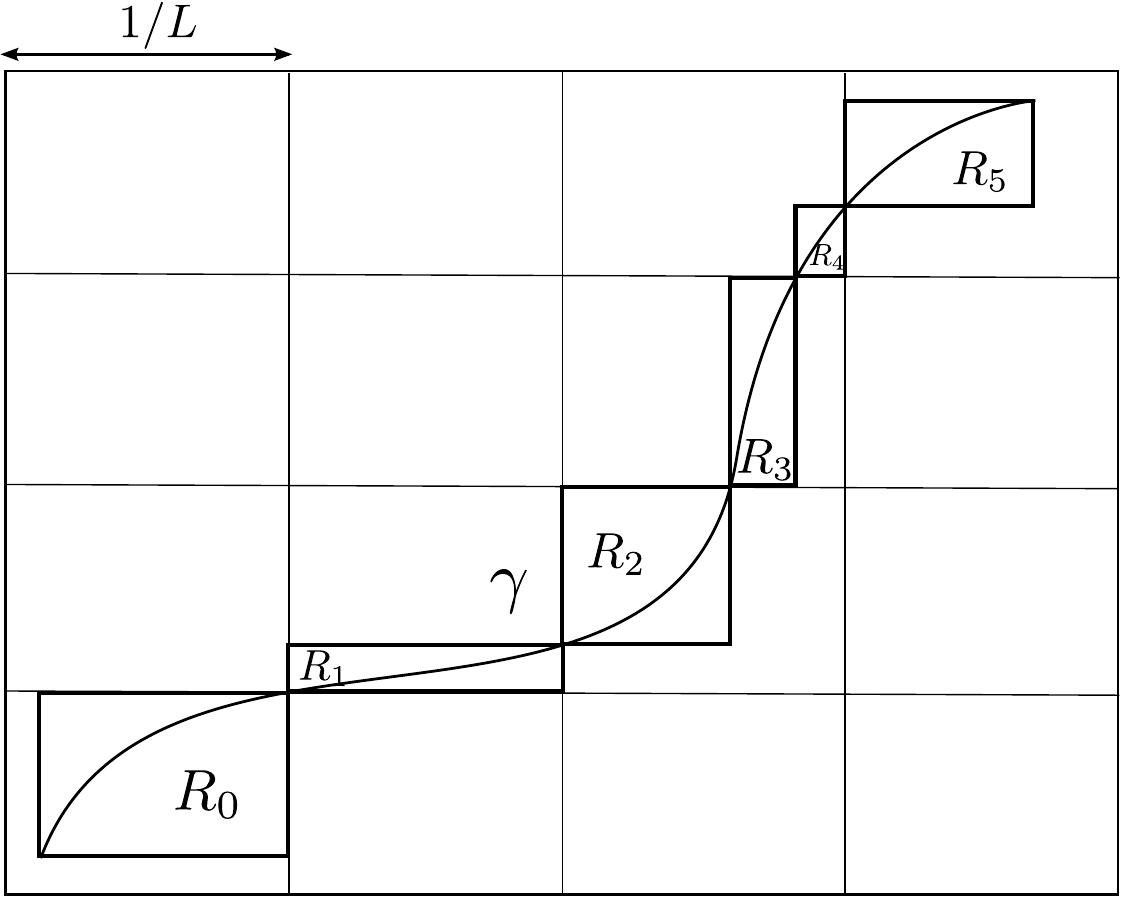}
\caption{An illustration of some quantities from the proof of Theorem \ref{thm:lower-bound}. }
\label{fig:lower-bound}
\end{figure}
\begin{proof}
Let $\eps>0$ and select $\gamma \in \A$ with $J(\gamma) \geq \bar{J} - \frac{\eps}{c_d}$.  Without loss of generality, we may assume that ${\gamma \:}'(t) > 0$ for all $t \in [0,1]$. Let $s_1,\dots,s_k$ denote the $k\leq dL$ times at which $\gamma$ intersects the set
\[\bigcup_{\alpha}\partial Q_{L,\alpha} \cap (0,1)^d.\]
Set $s_0=0$ and $s_{k+1}=1$.  For $j=0,\dots,k$ set $I_j=[s_j,s_{j+1})$ and
\begin{equation}\label{eq:Rj}
R_j = \{x \in [0,1)^d \, : \, \gamma(s_j) \leqq x < \gamma(s_{j+1})\}.
\end{equation}
For every $j$ we have $R_j \subset Q_{L,\alpha}$ for some $\alpha$.  Recalling the definition of $J$ \eqref{eq:Jdef} we have
\begin{equation}\label{eq:first}
J(\gamma) = \sum_{j=0}^{k} \int_{I_j} f(\gamma(t))^\frac{1}{d} (\gam{1}'(t) \cdots \gam{d}'(t))^\frac{1}{d} \, dt  
=\sum_{j=0}^{k} f(\gamma(s_j))^\frac{1}{d} \int_{I_j}(\gam{1}'(t) \cdots \gam{d}'(t))^\frac{1}{d} \, dt,
\end{equation}
where the second equality follows from the fact that $f$ is constant on $R_j \subset Q_{L,\alpha}$.  Applying the generalized H\"older inequality to \eqref{eq:first} we have
\begin{equation*}
J(\gamma) \leq\sum_{j=0}^{k} f(\gamma(s_j))^\frac{1}{d} \prod_{i=1}^d \left( \int_{I_j} \gami'(t) \, dt\right)^\frac{1}{d} 
= \sum_{j=0}^{k}f(\gamma(s_j))^\frac{1}{d} |R_j|^\frac{1}{d}.
\end{equation*}
Setting $p_j = \int_{R_j} f(x) \, dx= f(\gamma(s_j))|R_j|$ we have 
\begin{equation}\label{eq:pj}
\bar{J} \leq J(\gamma) + \frac{\eps}{c_d} \leq  \sum_{j =0}^{k} p_j^\frac{1}{d} + \frac{\eps}{c_d}.
\end{equation}

Fix $j \in\{0,\dots,k\}$.  Let $n_j$ denote the number of points from $X_1,\dots,X_n$ falling in $R_j$ and set
\begin{equation}\label{eq:ellj-def}
\ell_j(n) = \ell\left(\{X_1,\dots,X_n\} \cap R_j\right).
\end{equation}
Then $n_j$ is Binomially distributed with parameters $n$ and $p_j$. 
If $f$ is identically zero on $R_j$ then $\ell_j(n) = 0$ with probability one for all $n$, and $p_j=0$, hence
\begin{equation}\label{eq:trivial}
n^{-\frac{1}{d}} \ell_j(n) = c_dp_j^\frac{1}{d} \almostsurely 
\end{equation}
If $f$ is not identically zero on $R_j$, then since ${\gamma \:}'(t) > 0$ for all $t$, we have $|R_j| > 0$ and hence $p_j > 0$.  The conditional law $\rho_j := p_j^{-1} \cdot f \cdot \chi_{R_j}$ is then uniform on $R_j$.  Let $i_1,\dots,i_{n_j}$ be the indices of the $n_j$ random variables out of $X_1,\dots,X_n$ that belong to $R_j$.  Let  $\Psi:R_j \to [0,1)^d$ be the injective affine transformation mapping $R_j$ onto $[0,1)^d$.  Then $\Psi(X_{i_1}),\dots,\Psi(X_{i_{n_j}})$ are independent and uniformly distributed on $[0,1)^d$.   By~\cite[Remark 1]{bollobas1988}, we have 
\[n_j^{-\frac{1}{d}} \ell\left(\{\Psi(X_{i_1}),\dots,\Psi(X_{i_{n_j}})\}\right) \to c_d \almostsurely\]
Since $\Psi$ preserves the partial order $\leqq$, we have by \eqref{eq:ell-invariant} that
\[ \ell_j(n) = \ell\left(\{X_{i_1},\dots,X_{i_{n_j}}\}\right)=\ell\left(\{\Psi(X_{i_1}),\dots,\Psi(X_{i_{n_j}})\}\right).\]
Since $n^{-1} n_j \to p_j$ almost surely we have
\begin{equation}\label{eq:nontrivial}
n^{-\frac{1}{d}} \ell_j(n) = n_j^{-\frac{1}{d}} (n^{-1} n_j)^\frac{1}{d} \ell_j(n) \to c_d p_j^\frac{1}{d} \almostsurely
\end{equation}
Combining this \eqref{eq:pj}, \eqref{eq:trivial} and \eqref{eq:nontrivial}, we see that
\begin{equation}\label{eq:sum-conv}
n^{-\frac{1}{d}}\sum_{j =0}^{k} \ell_j(n)  \to c_d \sum_{j=0}^{k} p_j^\frac{1}{d} \geq c_d\bar{J} - \eps \almostsurely 
\end{equation}
Since $\gamma$ is a monotone curve (i.e., ${\gamma \:}'(t) \geq 0$), we can connect longest chains from each rectangle $R_j$ together to form a chain in $[0,1)^d$.  It follows that
\begin{equation}\label{eq:keyprop}
\ell\left(\{X_1,\dots,X_n\}\right) \geq \sum_{j =0}^{k} \ell_j(n).
\end{equation}
Combining this with \eqref{eq:sum-conv} we have
\begin{equation*}
\liminf_{n \to \infty} \ n^{-\frac{1}{d}}\ell\left(\{X_1,\dots,X_n\}\right)  \geq c_d\bar{J} - \eps \almostsurely,
\end{equation*}
which completes the proof. 
\end{proof}

For the proof of the upper bound, we need to introduce some new notation.  Let $k_1$ be an integer and set $\Delta x = 1/k_1$. 
Let $k_2$ be another integer and set $\Delta y = \Delta x / k_2$.  For given $k_1,k_2$, we say that a sequence of multiindices $\vb{b}=(b_j)_{j=1}^{k_1} \subset \N^{d-1}$ is \emph{admissible} if $b_1 \leqq \cdots \leqq b_{k_1}$ and $\|b_j\|_\infty \leq k_1 k_2$ for all $j$.  We denote the set of admissible multiindices by $\Phi(k_1,k_2)$.  For $\b \in \Phi(k_1,k_2)$, define $z_{\b,0},z_{\b,1},\dots,z_{\b,k_1}$ in $[0,1]^d$ by $z_{\b,0} = \vb{0}_d$ and $z_{\b,j} = (b_j\Delta y,j\Delta x)$ for $j\geq 1$.  Since $\b$ is admissible, $z_{\b,0},\dots,z_{\b,k_1}$ defines a chain in $[0,1]^d$.  Define $\gamma_\b:[0,1] \to [0,1]^d$ to be the polygonal curve connecting the points $z_{\b,0},\dots,z_{\b,k_1}$, i.e.,
\[\gamma_\b(t) = z_{\b,j-1} +  \frac{1}{\Delta x} (z_{\b,j}-z_{\b,j-1})(t- (j-1)\Delta x)\]
for $t \in [(j-1)\Delta x,j\Delta x]$.  For $\b \in \Phi(k_1,k_2)$ and $1 \leq j \leq k_1$, set
\[R_{\b,j} = \left\{x \in [0,1)^d \, : \, z_{\b,j-1} - (\vb{1}_{d-1},0)\Delta y \leqq x < z_{\b,j}\right\}.\]
For each rectangle $R_{\b,j}$, we set $p_{\b,j} = \int_{R_{\b,j}} f(x) \, dx$.
We say that a chain $x_1 \leqq x_2 \leqq \cdots \leqq x_m$ in $[0,1)^d$ is $\b$-\emph{increasing} if 
\[\{x_1,\dots,x_m\} \subset \bigcup_{j=1}^{k_1} R_{\b,j}.\]
It is not hard to see that for any $k_1,k_2$, every chain in $[0,1)^d$ is $\b$-increasing for some $\b \in \Phi(k_1,k_2)$.  See Figure \ref{fig:grid-example} for an illustration of the above definitions.   

\begin{figure}\label{fig:grid-example}
\centering
\includegraphics[width=0.75\textwidth]{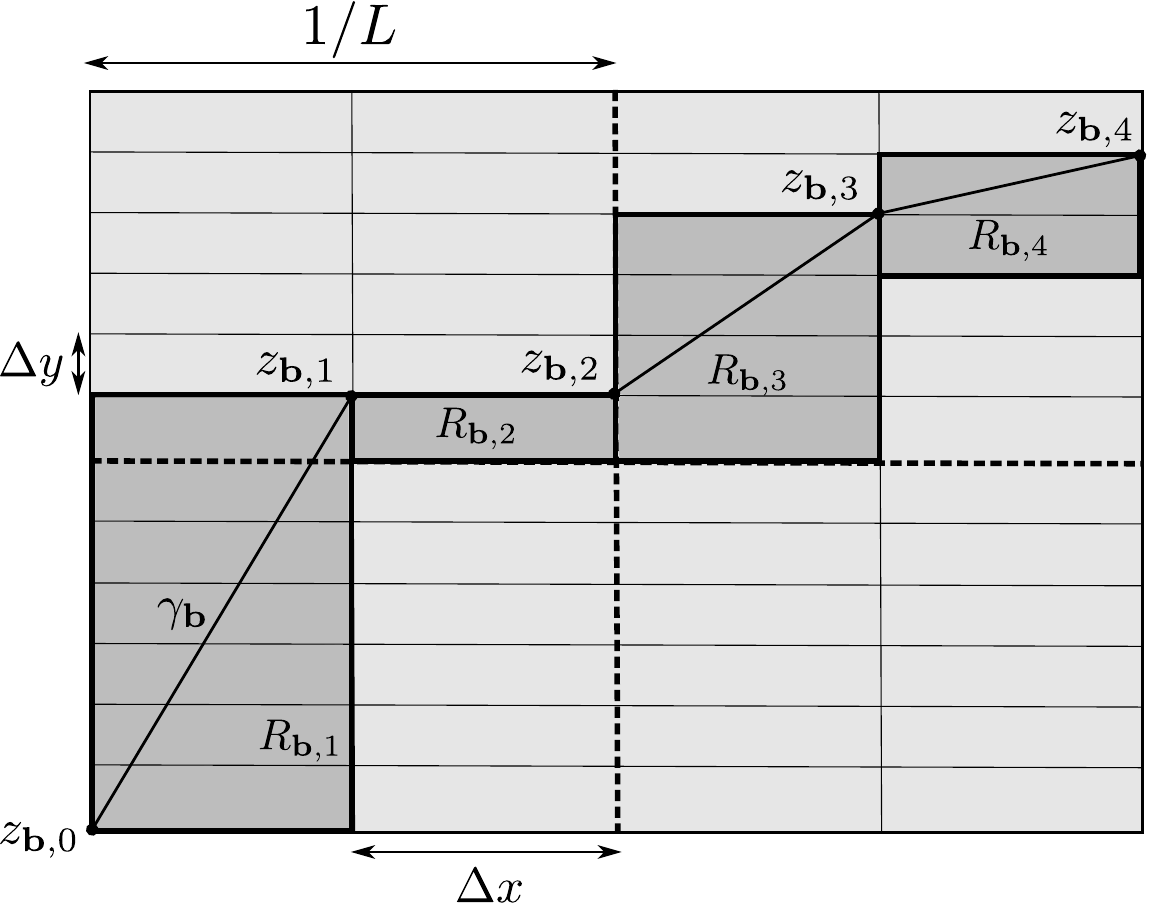}
\caption{An illustration of the quantities $R_{\b,j}$, $z_{\b,j}$, $Q_{L,\alpha}$ and $\gamma_\b$ in two dimensions with $\b=(b_1,b_2,b_3,b_4)=(7,7,10,11)$.  In this case, the unit square is partitioned into four squares, $Q_{L,(1,1)}$, $Q_{L,(1,2)}$, $Q_{L,(2,1)}$ and $Q_{L,(2,2)}$, which are separated by dotted lines in the figure.}
\end{figure}

We first need a preliminary lemma which bounds the length of a longest chain within the narrow strip
\begin{equation}\label{eq:T_j}
T_j := [0,1]^{d-1} \times [(j-1)\Delta x, j\Delta x),
\end{equation}
for any $j \in \{1,\dots,k_1\}$.
We note that the following lemma is a generalization of~\cite[Lemma 7]{deuschel1995}.  The proof is based on the same idea of using a mixing process to embed $X_1,\dots,X_n$ into another set of \iid~random variables that are uniform when restricted to the strip $T_j$.
\begin{lemma}\label{lem:strip}
Let $f:[0,1)^d \to [0,\infty)$ be $L$-piecewise constant, and let $X_1,\dots,X_n$ be \iid~with density $f$.
Fix an integer $j \in \{1,\dots,k_1\}$ and let $0 < \Delta x \leq \|f\|^{-1}_{L^\infty((0,1)^d)}$.
Then 
\begin{equation}\label{eq:strip-estimate}
\limsup_{n\to\infty} \  n^{-\frac{1}{d}}\ell\left(\{X_1,\dots,X_n\} \cap T_j\right) \leq c_d\left(2\Delta x \|f\|_{L^\infty((0,1)^d)}\right)^\frac{1}{d} \almostsurely
\end{equation}
\end{lemma}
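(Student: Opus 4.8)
The idea is to dominate $\{X_1,\dots,X_n\}\cap T_j$ by an \iid\ sample that is \emph{uniformly} distributed on $T_j$, and then to invoke the Bollob\'as--Winkler asymptotics together with the order-invariance \eqref{eq:ell-invariant}. Write $M=\|f\|_{L^\infty((0,1)^d)}$; since $\supp(f)\subset[0,1]^d$ we have $f\le M$ a.e.\ on $T_j$, while $|T_j|=\Delta x$ and, by hypothesis, $M\Delta x\le1$. The mechanism is the trivial splitting $M=f+(M-f)$ on $T_j$ with both summands nonnegative: this lets us build, around the given $X_i$'s, a $\mathrm{Bernoulli}(M\Delta x)$ ``thickening'' whose successes are uniform on $T_j$ and contain those $X_i$ that fall in $T_j$.

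\textbf{The coupling.} Enlarging the probability space with independent auxiliary uniforms, process the indices $i=1,2,\dots$ and let step $i$ contribute at most one point to a set $\mathcal V$: if $X_i\in T_j$ (probability $q_j:=\int_{T_j}f\le M\Delta x$) contribute the point $X_i$; if $X_i\notin T_j$, then with the suitable conditional probability contribute an auxiliary point drawn from the density $(M-f)\chi_{T_j}/(M\Delta x-q_j)$, the conditional probability being chosen so that the \emph{unconditional} chance of a contribution at step $i$ equals $M\Delta x$; otherwise contribute nothing (this requires $M\Delta x\le1$). A contributed point then has the mixture law $\tfrac{1}{M\Delta x}\bigl(f\chi_{T_j}+(M-f)\chi_{T_j}\bigr)=\tfrac{1}{\Delta x}\chi_{T_j}$, i.e.\ the uniform law on $T_j$; contributions are independent across $i$; and the contributions coming from the event $\{X_i\in T_j\}$ are exactly the elements of $\{X_1,\dots,X_n\}\cap T_j$. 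Writing $\mathcal V_n$ for the points contributed by indices $i\le n$, we thus have $\mathcal V_n=\{V_1,\dots,V_{N_n}\}$ with $V_1,V_2,\dots$ \iid\ uniform on $T_j$, $N_n\sim\mathrm{Binomial}(n,M\Delta x)$, $\mathcal V_1\subseteq\mathcal V_2\subseteq\cdots$, and $\{X_1,\dots,X_n\}\cap T_j\subseteq\mathcal V_n$. In particular any chain in $\{X_1,\dots,X_n\}\cap T_j$ is a chain in $\mathcal V_n$, so $\ell(\{X_1,\dots,X_n\}\cap T_j)\le\ell(\mathcal V_n)$. (The degenerate cases $q_j=0$ and $q_j=M\Delta x$, in which no auxiliary points occur, are covered by the same formulas.)

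\textbf{Passing to the limit.} Let $\Psi$ be the affine bijection of $T_j$ onto $[0,1)^d$ that rescales the last coordinate; it preserves $\leqq$, so by \eqref{eq:ell-invariant} we have $\ell(\mathcal V_n)=\ell(\{\Psi V_1,\dots,\Psi V_{N_n}\})$, and the $\Psi V_i$ are \iid\ uniform on $[0,1)^d$. By~\cite[Remark~1]{bollobas1988}, $m^{-\frac{1}{d}}\ell(\{\Psi V_1,\dots,\Psi V_m\})\to c_d$ almost surely as $m\to\infty$, and $N_n/n\to M\Delta x$ almost surely by the strong law of large numbers; since $N_n\to\infty$ monotonically, the elementary ``random time change'' for almost sure limits yields
\[
n^{-\frac{1}{d}}\ell(\mathcal V_n)=\Bigl(\tfrac{N_n}{n}\Bigr)^{\frac{1}{d}}N_n^{-\frac{1}{d}}\,\ell(\{\Psi V_1,\dots,\Psi V_{N_n}\})\ \longrightarrow\ c_d\,(M\Delta x)^{\frac{1}{d}}\qquad\text{a.s.}
\]
Therefore $\limsup_{n\to\infty}n^{-\frac{1}{d}}\ell(\{X_1,\dots,X_n\}\cap T_j)\le c_d(M\Delta x)^{\frac{1}{d}}\le c_d(2M\Delta x)^{\frac{1}{d}}$ almost surely, which proves \eqref{eq:strip-estimate} (indeed with the sharper constant $\Delta x$ in place of $2\Delta x$). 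The only point demanding genuine care is this last one: one must make sure the $\mathcal V_n$ are true initial segments of a single fixed \iid\ sequence, so that the almost sure Bollob\'as--Winkler limit may be evaluated along the random indices $N_n$; setting up the coupling and disposing of the degenerate cases are routine.
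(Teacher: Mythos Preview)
Your proof is correct and follows the same overall strategy as the paper: embed $\{X_1,\dots,X_n\}\cap T_j$ into a larger sample that is uniform on $T_j$, then apply Bollob\'as--Winkler and a law-of-large-numbers rescaling. The coupling you build, however, is different from the paper's and in fact more efficient.

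The paper constructs a mixture process $Z_k = m_k X_{i_k} + (1-m_k)Y_k$ with density proportional to $g=f+(M-f)\chi_{T_j}$; after $n$ steps this has consumed only $m=i_n\sim\mathrm{Binomial}(n,(1+\beta)^{-1})$ of the $X$'s, where $\beta=\int_{T_j}(M-f)$. The conclusion for $\{X_1,\dots,X_m\}$ then has to be rescaled from the index $n$ to the random index $m$, and the factor $(n/m)^{1/d}\to(1+\beta)^{1/d}\le 2^{1/d}$ is exactly where the $2$ in \eqref{eq:strip-estimate} comes from. Your coupling instead processes each $X_i$ individually---keeping it if it lands in $T_j$, and otherwise occasionally inserting an auxiliary point---so that all $n$ of the $X_i$'s are accounted for and $\{X_1,\dots,X_n\}\cap T_j\subseteq\{V_1,\dots,V_{N_n}\}$ with $N_n\sim\mathrm{Binomial}(n,M\Delta x)$. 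No back-rescaling is needed, and you obtain the sharper bound $c_d(M\Delta x)^{1/d}$. The one point you flag as needing care---that the $V_k$ form a single fixed \iid\ sequence so the Bollob\'as--Winkler limit can be evaluated along $N_n$---is indeed the crux, and your remark that this follows from the thinning of the \iid\ pairs (contribution indicator, contributed value) is correct.
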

\begin{proof}
Set $M=\|f\|_{L^\infty((0,1)^d)}$ and let $g = f + (M - f)\cdot \chi_{T_j}$.  Let $Y_1,\dots,Y_n$ be \iid~according to the conditional density $\beta^{-1} (M-f)\cdot \chi_{T_j}$ where $\beta = \int_{T_j} M - f(x) \, dx$.  Let $m_1,\dots,m_n$ be Bernoulli zero-one random variables with parameter $(1+\beta)^{-1}$ and set 
\begin{equation}\label{eq:index}
i_k = m_1 + \cdots +  m_k.
\end{equation}
Define $Z_1,\dots,Z_n$ through the mixture process
\[Z_k = m_k X_{i_k} + (1-m_k)Y_k.\]
Then $Z_1,\dots,Z_n$ are \iid~with density $(1+\beta)^{-1}g$.  Let $W$ denote the cardinality of the set $\{Z_1,\dots,Z_n\}\cap T_j$.  Then $W$ is binomially distributed with parameters $n$ and $p:=(1+\beta)^{-1}\Delta x M$.   Since $g$ is constant on $T_j$, we can use a similar argument to that in Theorem \ref{thm:lower-bound} to show that
\begin{equation}\label{eq:zlimit}
n^{-\frac{1}{d}}\ell\left(\{Z_1,\dots,Z_n\} \cap T_j\right) \to  c_d p^\frac{1}{d} \almostsurely
\end{equation}
Let $m=i_n$ and note that
\[\ell\left(\{X_1,\dots,X_{m}\}\right) = \ell\left(\{Z_k \, : \, m_k = 1\} \cap T_j\right) \leq \ell\left(\{Z_1,\dots,Z_n\}\cap T_j\right),\]
and that $p \leq \Delta x M$.  Combining this with \eqref{eq:zlimit} we have
\begin{equation}\label{eq:strip-a}
\limsup_{n\to \infty} \ n^{-\frac{1}{d}}\ell\left(\{X_1,\dots,X_{m}\}\right) \leq c_d (\Delta x M)^\frac{1}{d} \almostsurely 
\end{equation}
Since $m$ is Binomially distributed with parameters $n$ and $(1+\beta)^{-1}$, we have $n m^{-1} \to 1 + \beta$ almost surely and hence
\begin{align*}
\limsup_{n\to \infty} \ m^{-\frac{1}{d}} \ell\left(\{X_1,\dots,X_m\}\right) &{}={} \limsup_{n\to \infty} \ (n m^{-1})^\frac{1}{d} n^{-\frac{1}{d}} \ell\left(\{X_1,\dots,X_m\}\right) \\
&{}\leq{} (1+\beta)^\frac{1}{d} c_d (\Delta x M)^\frac{1}{d} \almostsurely
\end{align*}
Since $\beta \leq \Delta x M \leq 1$ we have 
\[\limsup_{n\to \infty} \ m^{-\frac{1}{d}} \ell\left(\{X_1,\dots,X_m\}\right) \leq c_d(2\Delta x M)^\frac{1}{d} \almostsurely\]
The desired result \eqref{eq:strip-estimate} follows from noting that $n\mapsto m(n)$ is monotone nondecreasing along every sample path and $m\to \infty$ as $n\to \infty$ with probability one.
\end{proof}

The following short technical lemma is essential in the proof of Theorem \ref{thm:upper-bound}
\begin{lemma}\label{lem:approx}
Let $f:[0,1)^d \to [0,\infty)$ be $L$-piecewise constant.  For every $\eps >0$ and $k_1\geq L$ we have
\begin{equation}\label{eq:Jb_bound}
\sum_{j\in \H_{\b}} p_{\b,j}^\frac{1}{d} \leq \bar{J} + \eps.
\end{equation}
for all $\b \in \Phi(k_1,k_2)$, the admissible multiindices, and $k_2\geq C \|f\|_{L^\infty((0,1)^d)}k_1^{d-1}/\eps^d$, where
\begin{equation}\label{eq:Hb}
\H_{\b} = \{j \, : \,  R_{\b,j} \subset Q_{L,\alpha} \ {\rm for \ some \ } \alpha\}.
\end{equation}
\end{lemma}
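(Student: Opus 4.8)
The plan is to compare $\sum_{j\in\H_\b}p_{\b,j}^{\frac1d}$ with the energy $J(\gamma_\b)$ of the polygonal curve $\gamma_\b$, which satisfies $J(\gamma_\b)\le\bar J$, and to show that the resulting discrepancy is at most $\eps$. This discrepancy is caused entirely by the $\Delta y$-wide ``collar'' by which each rectangle $R_{\b,j}$ overshoots the sub-box that $\gamma_\b$ actually traverses on the time interval $I_j=[(j-1)\Delta x, j\Delta x)$. Set $b_0=\vb{0}_{d-1}$ and, for $1\le j\le k_1$ and $1\le i\le d-1$, let $a_{j,i}=(b_j)_i-(b_{j-1})_i$, a nonnegative integer. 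Admissibility of $\b$ and telescoping give $\sum_{j=1}^{k_1}a_{j,i}=(b_{k_1})_i\le k_1k_2=:M$, so $0\le a_{j,i}\le M$ for all $j,i$. A direct computation yields $|R_{\b,j}|=V\prod_{i=1}^{d-1}(a_{j,i}+1)$ with $V:=\Delta x\,(\Delta y)^{d-1}=k_1^{-d}k_2^{-(d-1)}$; and when $j\in\H_\b$, $f$ takes a constant value $f_{\b,j}$ on $R_{\b,j}$, so $p_{\b,j}=f_{\b,j}V\prod_{i=1}^{d-1}(a_{j,i}+1)$. On $I_j$ the curve $\gamma_\b$ is affine with $\gamma_{\b,d}'\equiv 1$ and $\gamma_{\b,i}'\equiv a_{j,i}/k_2$ for $i<d$; its image lies in $\overline{R_{\b,j}}$, and if $a_{j,i}\ge 1$ for all $i$ then in fact $\gamma_\b(t)\in R_{\b,j}\subset Q_{L,\alpha}$ for every $t\in I_j$, whereas if some $a_{j,i}=0$ the integrand of $J$ vanishes identically on $I_j$. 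In either case
\[
\int_{I_j}f(\gamma_\b(t))^{\frac1d}(\gamma_{\b,1}'(t)\cdots\gamma_{\b,d}'(t))^{\frac1d}\,dt=f_{\b,j}^{\frac1d}V^{\frac1d}\Big(\prod_{i=1}^{d-1}a_{j,i}\Big)^{\frac1d},
\]
and hence $p_{\b,j}^{\frac1d}-\int_{I_j}(\cdots)=f_{\b,j}^{\frac1d}V^{\frac1d}\big[(\prod_i(a_{j,i}+1))^{\frac1d}-(\prod_i a_{j,i})^{\frac1d}\big]\ge 0$.

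Summing over $j\in\H_\b$ and using that the integrand of $J$ is nonnegative gives $\sum_{j\in\H_\b}\int_{I_j}(\cdots)\le J(\gamma_\b)\le\bar J$, where the last inequality follows from a routine approximation of the piecewise-affine, nondecreasing curve $\gamma_\b$ by curves $\gamma_\b^\delta(t)=(\rho_\delta*\gamma_\b)(t)+\delta t\,\vb{1}_d\in\A$, for which $J(\gamma_\b^\delta)\to J(\gamma_\b)$ as $\delta\to0$ by bounded convergence (the derivatives converge a.e.\ and $t\mapsto f(\gamma_\b(t))$ is constant off the finitely many times at which $\gamma_\b$ meets a grid hyperplane). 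Therefore
\[
\sum_{j\in\H_\b}p_{\b,j}^{\frac1d}\le\bar J+E,\qquad E:=\sum_{j\in\H_\b}f_{\b,j}^{\frac1d}V^{\frac1d}\Big[\Big(\prod_{i=1}^{d-1}(a_{j,i}+1)\Big)^{\frac1d}-\Big(\prod_{i=1}^{d-1}a_{j,i}\Big)^{\frac1d}\Big],
\]
and it remains to prove $E\le\eps$ under the stated lower bound on $k_2$.

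The point is to estimate the bracket by the $d$-th root of the \emph{difference} of the two products rather than by $(\prod_i(a_{j,i}+1))^{\frac1d}$ itself: by subadditivity of $s\mapsto s^{\frac1d}$ on $[0,\infty)$ the bracket is at most $\big(\prod_i(a_{j,i}+1)-\prod_i a_{j,i}\big)^{\frac1d}$, and expanding, $\prod_{i=1}^{d-1}(a_{j,i}+1)-\prod_{i=1}^{d-1}a_{j,i}=\sum_{S\subsetneq\{1,\dots,d-1\}}\prod_{i\in S}a_{j,i}<2^{d-1}M^{d-2}$, since each factor is $\le M$ and each such product ranges over at most $d-2$ indices. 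With $f_{\b,j}\le\|f\|_{L^\infty((0,1)^d)}$, $V^{\frac1d}=k_1^{-1}k_2^{-\frac{d-1}{d}}$, $\#\H_\b\le k_1$ and $M=k_1k_2$, this gives
\[
E<k_1\cdot\|f\|_{L^\infty((0,1)^d)}^{\frac1d}\cdot k_1^{-1}k_2^{-\frac{d-1}{d}}\cdot 2^{\frac{d-1}{d}}(k_1k_2)^{\frac{d-2}{d}}=2^{\frac{d-1}{d}}\,\|f\|_{L^\infty((0,1)^d)}^{\frac1d}\,k_1^{\frac{d-2}{d}}k_2^{-\frac1d}.
\]
Plugging in $k_2\ge C\|f\|_{L^\infty((0,1)^d)}k_1^{d-1}/\eps^d$, equivalently $k_2^{-\frac1d}\le C^{-\frac1d}\|f\|_{L^\infty((0,1)^d)}^{-\frac1d}k_1^{-\frac{d-1}{d}}\eps$, yields $E<2^{\frac{d-1}{d}}C^{-\frac1d}k_1^{-\frac1d}\eps\le 2^{\frac{d-1}{d}}C^{-\frac1d}\eps$ since $k_1\ge L\ge 1$; taking $C=2^{d-1}$ (or any larger dimensional constant) gives $E<\eps$, which proves \eqref{eq:Jb_bound}. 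If $\H_\b=\varnothing$ the inequality is immediate since $\bar J\ge 0$.

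I expect the main obstacle to be the bookkeeping in the first step: verifying that the integrand of $J(\gamma_\b)$ really collapses to $f_{\b,j}^{\frac1d}(\gamma_{\b,1}'\cdots\gamma_{\b,d}')^{\frac1d}$ on each $I_j$ with $j\in\H_\b$ (handling the half-open boundaries of $R_{\b,j}$ and $Q_{L,\alpha}$, and the degenerate segments on which some $\gamma_{\b,i}'$ vanishes), together with the routine but fiddly approximation of the non-smooth $\gamma_\b$ by admissible curves. The analytic content, that the total overshoot is small, is then elementary, the essential observation being to bound the difference $\prod_i(a_{j,i}+1)-\prod_i a_{j,i}$, which is $O(M^{d-2})$, rather than the product $\prod_i(a_{j,i}+1)$ itself — a naive bound on each $p_{\b,j}^{\frac1d}$ separately would fail because there can be up to $k_1$ terms in the sum.
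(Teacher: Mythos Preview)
Your argument is correct and follows essentially the same route as the paper: compare $\sum_{j\in\H_\b}p_{\b,j}^{1/d}$ with $J(\gamma_\b)\le\bar J$, compute the per-interval discrepancy as $f_{\b,j}^{1/d}V^{1/d}\big[(\prod_i(a_{j,i}+1))^{1/d}-(\prod_i a_{j,i})^{1/d}\big]$, bound the bracket via subadditivity of $s\mapsto s^{1/d}$, and sum. The paper packages the pointwise bound as $\big||R_{\b,j}|^{1/d}-\Delta x(\gamma_{\b,1}'\cdots\gamma_{\b,d}')^{1/d}\big|\le C\,\Delta x^{1/d}\Delta y^{1/d}$ and arrives at an error of order $\|f\|_{L^\infty}^{1/d}k_1^{(d-1)/d}k_2^{-1/d}$, whereas your expansion of $\prod(a_{j,i}+1)-\prod a_{j,i}$ gives the slightly sharper $\|f\|_{L^\infty}^{1/d}k_1^{(d-2)/d}k_2^{-1/d}$; both suffice. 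You are also more explicit than the paper on two technical points it glosses over: that $\gamma_\b$ is only piecewise affine (hence must be mollified into $\A$ before invoking $J(\gamma_\b)\le\bar J$), and the half-open boundary bookkeeping when some $a_{j,i}=0$.
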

\begin{proof}
Let $k_1,k_2,\eps >0$, and $\b \in \Phi(k_1,k_2)$. Set $I_j = [(j-1)\Delta x, j\Delta x)$ and fix $j\in \{1,\dots,k_1\}$ and $t \in I_j$.  Note that
\[|R_{\b,j}| =\begin{cases}
\Delta y^{d-1} \Delta x \prod_{i=1}^{d-1} (b_{j,i}-b_{j-1,i} + 1)& {\rm if \ } j\geq 2 \\
\Delta y^{d-1} \Delta x \prod_{i=1}^{d-1} (b_{j,i}-b_{j-1,i})& {\rm if \ } j=1, \end{cases}\]
and 
\[\Delta x^d  \gamw{\b,1}(t) \cdots \gamw{\b,d}(t) = \Delta y^{d-1} \Delta x \prod_{i=1}^{d-1} (b_{j,i} - b_{j-1,i}),\] 
where we set $\b_0 = 0$ for convenience.  A short computation shows that
\begin{equation}\label{eq:rect-approx}
|\Delta x (\gamw{\b,1}(t) \cdots \gamw{\b,d}(t))^\frac{1}{d}  - |R_{\b,j}|^\frac{1}{d}| \leq C\Delta x^\frac{1}{d} \Delta y^\frac{1}{d},
\end{equation}
where $C=(d-1)^\frac{1}{d}$.
Since $f$ is $L$-piecewise constant we have
\begin{equation}\label{eq:const}
f(\gamma_\b(t)) = f(z_{\b,j-1}) = \frac{p_{\b,j}}{|R_{\b,j}|},
\end{equation}
for all $j \in \H_\b$ and $t \in I_j$.
Noting that $\Delta x = |I_j|$ and recalling the definition of $J$ \eqref{eq:Jdef} we have
\begin{align}\label{eq:Jb}
\bar{J} \geq J(\gamma_\b) &\stackrel{\eqref{eq:rect-approx}}{\geq} \sum_{j \in \H_{\b}}\frac{1}{|I_j|}  \int_{I_j} f(\gamma_\b(t))^\frac{1}{d} (|R_{\b,j}|^\frac{1}{d} - C\Delta x^\frac{1}{d} \Delta y^\frac{1}{d}) \, dt \notag\\
&\hspace{1mm}{}={} \sum_{j \in \H_{\b}}\frac{|R_{\b,j}|^\frac{1}{d}}{|I_j|}  \int_{I_j} f(\gamma_\b(t))^\frac{1}{d} \, dt - C \sum_{j \in H_\b}\frac{1}{|I_j|} \int_{I_j} f(\gamma_\b(t))^\frac{1}{d}\Delta x^\frac{1}{d} \Delta y^\frac{1}{d} \, dt \notag\\
&\stackrel{\eqref{eq:const}}{\geq}\sum_{j \in \H_{\b}} p_{\b,j}^\frac{1}{d}  - \|f\|_{L^\infty((0,1)^d)}^\frac{1}{d}k_1^\frac{d-1}{d} k_2^{-\frac{1}{d}}.
\end{align}
Taking $k_2\geq(C/\eps)^d\|f\|_{L^\infty((0,1)^d)} k_1^{d-1}$ completes the proof.
\end{proof}

We now establish an asymptotic upper bound on $\ell\left(\{X_1,\dots,X_n\}\right)$.
\begin{theorem}\label{thm:upper-bound}
Let $f:[0,1)^d \to [0,\infty)$ be $L$-piecewise constant, and let $X_1,\dots,X_n$ be \iid~with density $f$.
Then
\[\limsup_{n\to \infty} \ n^{-\frac{1}{d}}\ell\left(\{X_1,\dots,X_n\}\right) \leq c_d \bar{J} \almostsurely\]
\end{theorem}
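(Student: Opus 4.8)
The plan is to combine three facts already established: that every chain in $[0,1)^d$ is $\b$-increasing for some admissible $\b$; the strip estimate of Lemma~\ref{lem:strip}; and the energy bound of Lemma~\ref{lem:approx}. First I would fix $\eps>0$, choose an integer $k_1$ that is a multiple of $L$ with $k_1\geq\|f\|_{L^\infty((0,1)^d)}$, and then choose $k_2$ large enough that the conclusion of Lemma~\ref{lem:approx} holds for every $\b\in\Phi(k_1,k_2)$. Since $\Phi(k_1,k_2)$ is a finite set whose cardinality does not depend on $n$, and every chain is $\b$-increasing for some $\b$ in it, we have $\ell(\{X_1,\dots,X_n\})=\max_{\b\in\Phi(k_1,k_2)}\ell_{\b}(n)$, where $\ell_{\b}(n)$ denotes the length of a longest $\b$-increasing chain among $X_1,\dots,X_n$. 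Because a maximum over finitely many indices commutes with $\limsup$, it suffices to bound $\limsup_{n\to\infty}n^{-1/d}\ell_{\b}(n)$ for each fixed $\b$.

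Next, for a fixed $\b$ I would observe that a $\b$-increasing chain is contained in $\bigcup_{j=1}^{k_1}R_{\b,j}$; assigning each of its points to the least index $j$ with that point in $R_{\b,j}$ produces, for each $j$, a chain contained in $\{X_1,\dots,X_n\}\cap R_{\b,j}$, so that
\[\ell_{\b}(n)\leq\sum_{j=1}^{k_1}\ell\big(\{X_1,\dots,X_n\}\cap R_{\b,j}\big).\]
I would split this sum according to whether $j\in\H_{\b}$ or not. For $j\in\H_{\b}$ the rectangle $R_{\b,j}$ lies inside a single cell $Q_{L,\alpha}$, on which $f$ is constant, and the argument of Theorem~\ref{thm:lower-bound} applies verbatim: conditioning on the (binomially distributed, with parameters $n$ and $p_{\b,j}$) number of samples in $R_{\b,j}$, mapping $R_{\b,j}$ affinely and order-preservingly onto $[0,1)^d$, and invoking \cite[Remark~1]{bollobas1988} together with the strong law of large numbers gives $n^{-1/d}\ell(\{X_1,\dots,X_n\}\cap R_{\b,j})\to c_dp_{\b,j}^{1/d}$ almost surely, with the convention that this limit is $0$ when $p_{\b,j}=0$. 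Summing over $j\in\H_{\b}$ and applying Lemma~\ref{lem:approx} yields
\[\limsup_{n\to\infty}\,n^{-\frac{1}{d}}\sum_{j\in\H_{\b}}\ell\big(\{X_1,\dots,X_n\}\cap R_{\b,j}\big)=c_d\sum_{j\in\H_{\b}}p_{\b,j}^{\frac{1}{d}}\leq c_d(\bar{J}+\eps)\qquad\text{a.s.}\]

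For the indices $j\notin\H_{\b}$ I would use the crude bound $\ell(\{X_1,\dots,X_n\}\cap R_{\b,j})\leq\ell(\{X_1,\dots,X_n\}\cap T_j)$, valid because $R_{\b,j}\subset T_j$, together with Lemma~\ref{lem:strip} (applicable since $\Delta x=1/k_1\leq\|f\|_{L^\infty((0,1)^d)}^{-1}$), giving $\limsup_{n\to\infty}n^{-1/d}\ell(\{X_1,\dots,X_n\}\cap R_{\b,j})\leq c_d\big(2\Delta x\,\|f\|_{L^\infty((0,1)^d)}\big)^{1/d}$ for each such $j$. The step I expect to be the main obstacle is proving that the number of bad indices, $\#\big(\{1,\dots,k_1\}\setminus\H_{\b}\big)$, is bounded by a constant $N_0=N_0(d,L)$ independent of $k_1$ and $k_2$. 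An index $j$ is bad exactly when $R_{\b,j}$ meets both sides of some grid hyperplane $\{x_i=m/L\}$ with $i\in\{1,\dots,d\}$ and $1\leq m\leq L-1$; since $L$ divides $k_1$, the $d$-th coordinate ranges $[(j-1)\Delta x,j\Delta x)$ of the $R_{\b,j}$ are aligned with the grid and no such range straddles a hyperplane, while for each fixed $i<d$ and $m$ the monotonicity $b_{1,i}\leq\cdots\leq b_{k_1,i}$ of an admissible $\b$ forces at most one index $j$ to straddle $\{x_i=m/L\}$. Hence $N_0\leq(d-1)(L-1)$, and therefore
\[\limsup_{n\to\infty}\,n^{-\frac{1}{d}}\sum_{j\notin\H_{\b}}\ell\big(\{X_1,\dots,X_n\}\cap R_{\b,j}\big)\leq N_0\,c_d\Big(\frac{2\,\|f\|_{L^\infty((0,1)^d)}}{k_1}\Big)^{\frac{1}{d}}\qquad\text{a.s.}\]

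Finally I would add the two estimates, intersect the relevant countably many almost-sure events, and conclude that for every admissible pair $(k_1,k_2)$,
\[\limsup_{n\to\infty}\,n^{-\frac{1}{d}}\ell\big(\{X_1,\dots,X_n\}\big)\leq c_d(\bar{J}+\eps)+N_0\,c_d\Big(\frac{2\,\|f\|_{L^\infty((0,1)^d)}}{k_1}\Big)^{\frac{1}{d}}\qquad\text{a.s.}\]
Letting $k_1\to\infty$ through multiples of $L$ and then $\eps\to 0$ along countable sequences makes the right-hand side converge to $c_d\bar{J}$, which gives $\limsup_{n\to\infty}n^{-1/d}\ell(\{X_1,\dots,X_n\})\leq c_d\bar{J}$ almost surely, as claimed.
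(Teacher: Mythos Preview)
Your proposal is correct and follows essentially the same route as the paper's proof: decompose any chain as $\b$-increasing for some $\b\in\Phi(k_1,k_2)$, bound $\ell_\b(n)$ by the sum of longest chains in the rectangles $R_{\b,j}$, handle $j\in\H_\b$ via the uniform-on-a-box argument plus Lemma~\ref{lem:approx}, and handle $j\notin\H_\b$ via Lemma~\ref{lem:strip} together with the bound $|\H_\b^c|\leq (d-1)L$. The only cosmetic differences are that the paper fixes $k_1$ large enough up front so that the strip contribution is at most $\eps/2$ (rather than sending $k_1\to\infty$ at the end), and it asserts the bound $|\H_\b^c|\leq(d-1)L$ directly, whereas you supply the sharper $(d-1)(L-1)$ with an explicit monotonicity argument.
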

\begin{proof}
Let $k_2>0,k_1\geq L, \eps>0$, and $\b \in \Phi(k_1,k_2)$.  We suppose that $k_1\geq L$ is a multiple of $L$ so that $f$ is $k_1$-piecewise constant. Let $\ell_\b(n)$ denote the length of a longest $\b$-increasing chain. Let $n_j$ denote the number of $X_1,\dots,X_n$ that belong $R_{\b,j}$ and set
\begin{equation}\label{eq:ellbj-def}
\ell_{\b,j}(n) = \ell\left(\{X_1,\dots,X_n\} \cap R_{\b,j}\right).
\end{equation}
Due to the monotonicity of $z_{\b,0},\dots,z_{\b,k_1}$, at most $(d-1)L$ of $R_{\b,1},\dots,R_{\b,k_1}$ can have a non-empty intersection with more than one hypercube $Q_{L,\alpha}$.  It follows that $|\H^c_{\b}| \leq (d-1) L$, where $\H^c_\b = \{1,\dots,k_1\}\setminus \H_\b$.

Since each $\b$-increasing chain is the union of chains in $R_{\b,1},\dots,R_{\b,k_1}$, we have
\begin{equation}\label{eq:ellb}
\ell_\b(n) \leq \sum_{j=1}^{k_1} \ell_{\b,j}(n) =  \sum_{j \in \H^c_\b} \ell_{\b,j}(n) + \sum_{j \in \H_\b} \ell_{\b,j}(n).
\end{equation}
We will deal with each of the above sums separately.  For the first term, set $M=\|f\|_{L^\infty((0,1)^d)}$ and let $k_1$ be large enough so that $\Delta x \leq 1/M$.  Since $R_{\b,j} \subset T_j$ for each $j$, we have by Lemma \ref{lem:strip} that
\[\limsup_{n \to \infty} \ n^{-\frac{1}{d}} \sum_{j \in \H^c_\b} \ell_{\b,j}(n) \leq c_d|\H^c_\b|(2\Delta x M)^\frac{1}{d} \leq c_d (d-1) L (2 M)^\frac{1}{d} k_1^{-\frac{1}{d}}\almostsurely\]
Choose $k_1$ large enough so that
\begin{equation}\label{eq:first-term}
\limsup_{n \to \infty} \ n^{-\frac{1}{d}} \sum_{j \in \H^c_\b} \ell_{\b,j}(n) \leq \frac{\eps}{2}\almostsurely 
\end{equation}

We now bound the second sum in \eqref{eq:ellb}. By Lemma \ref{lem:approx}, choose $k_2=k(M,k_1,\eps)$ so that
\begin{equation}\label{eq:sumpbj}
\sum_{j \in \H_\b} p_{\b,j}^\frac{1}{d} \leq \bar{J} + \frac{\eps}{2c_d},
\end{equation}
for all $\b\in \Phi(k_1,k_2)$.
For any $j \in \H_\b$, the conditional density $\rho_j$ on $R_{\b,j}$ is uniform.  By a similar argument as in the proof of Theorem \ref{thm:lower-bound}, we have that
\[n^{-\frac{1}{d}} \ell_{\b,j}(n)  \to c_d p_{\b,j}^\frac{1}{d} \almostsurely\]
Combining this with \eqref{eq:ellb}, \eqref{eq:first-term}, and \eqref{eq:sumpbj} we have
\begin{equation}\label{eq:conv-b}
\limsup_{n\to \infty} \ n^{-\frac{1}{d}} \ell_\b(n) \leq c_d\bar{J} + \eps \almostsurely
\end{equation}
Since every chain in $X_1,\dots,X_n$ is $\b$-increasing for some $\b \in \Phi(k_1,k_2)$, we have
\[\ell\left(\{X_1,\dots,X_n\}\right) \leq \max_{\b \in \Phi(k_1,k_2)} \ell_\b(n),\]
for every $n$. It follows that
\[\limsup_{n\to \infty} \ \ell\left( \{X_1,\dots,X_n\}\right) \leq c_d \bar{J} + \eps \almostsurely, \]
which completes the proof.
\end{proof}

\subsection{Continuous densities on $\Omega$}
\label{sec:cont}

We now generalize the convergence results on piecewise constant densities, Theorems \ref{thm:lower-bound} and \ref{thm:upper-bound}, to continuous densities on $\Omega$.  Our main result, Theorem \ref{thm:linfty-conv}, is proved at the end of the section.  The idea of our approach is to divide $[0,1)^d$ into a large number of hypercubes, and to flatten $f$ on each sub-cube.  We can then apply the results from Section \ref{sec:piecewise} and take the limit as the size of the sub-cubes tends to zero.  In order to pass to the limit, we apply the perturbation result given in Lemma \ref{lem:l1-pert}.

Let $X_1,\dots,X_n$ be \iid~with density $f$.  We recall that $u_n(x)$ denotes the length of a longest chain among $X_1,\dots,X_n$ consisting of points less than or equal to $x$ under the partial order $\leqq$.  In other words
\[u_n(x) = \ell\left(\{X_i \, : \, X_i \leqq x\}\right).\]
We also recall the definition of the value function $U$, defined in \eqref{eq:vardef} by
\[U(x) = \sup_{\gamma \in \A \, : \, \gamma \leqq x} \int_0^1 f(\gamma(t))^\frac{1}{d} (\gam{1}'(t) \cdots \gam{d}'(t))^\frac{1}{d} \, dt.\]
We now establish pointwise asymptotic upper and lower bounds on $u_n$.
\begin{theorem}\label{thm:pointwise-upper}
Let $f:\R^d \to \R$ satisfy (H1) and let $\Omega\subset \R^d_+$ satisfy (H2). Then for every $z \in \R^d$ we have
\begin{equation}\label{eq:pointwise-limit}
\limsup_{n\to \infty} \ n^{-\frac{1}{d}} u_n(z) \leq c_d U(z) \almostsurely 
\end{equation}
\end{theorem}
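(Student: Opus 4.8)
The plan is to squeeze $u_n(z)$ between longest-chain functionals for piecewise constant densities dominating $f$, to apply the piecewise constant upper bound (Theorem~\ref{thm:upper-bound}), and then to pass to the limit using the perturbation result (Lemma~\ref{lem:l1-pert}). As in Section~\ref{sec:var} we may assume $\supp(f)\subset[0,1]^d$. Two cases are trivial: if $z_i\le 0$ for some $i$ then $\{X_j:X_j\leqq z\}=\varnothing$ almost surely, so $u_n(z)=0$, while $U(z)=0$ by Theorem~\ref{thm:hjb}(ii); and since $\supp(f)\subset[0,1]^d$ we have $\{X_j:X_j\leqq z\}=\{X_j:X_j\leqq\hat z\}$ almost surely with $\hat z=(\min(z_1,1),\dots,\min(z_d,1))$, while $U(z)=U(\hat z)$ by Theorem~\ref{thm:hjb}(i). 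Hence we may assume $z\in(0,1]^d$.

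Fix $L\in\N$. Let $z^L\in(0,1]^d$ be the least point of $L^{-1}\Z^d$ with $z^L\geqq z$, so $|z^L-z|\le\sqrt d/L$, and let $f_L$ be the $L$-piecewise constant function equal to $\sup_{Q_{L,\alpha}}f$ on each cell $Q_{L,\alpha}$ (and $f_L=0$ off $[0,1)^d$). By (H1), $f\le f_L\le f+m(\sqrt d/L)$ pointwise, so $\beta_L:=\int(f_L-f)\le m(\sqrt d/L)\to0$ and $f_L\in\B$. By the mixing (change-of-measure) construction in the proof of Lemma~\ref{lem:strip}, there is a sequence $Z_1,Z_2,\dots$ i.i.d.\ with the $L$-piecewise constant density $\tilde f_L:=(1+\beta_L)^{-1}f_L$, together with a nondecreasing integer sequence $M=M(n)\to\infty$ with $M(n)/n\to(1+\beta_L)^{-1}$ almost surely, such that $\{X_1,\dots,X_{M(n)}\}\subseteq\{Z_1,\dots,Z_n\}$. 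Since this inclusion preserves the partial order and the property of being $\leqq z^L$,
\[\ell\big(\{X_i:i\le M(n),\ X_i\leqq z^L\}\big)\ \le\ \ell\big(\{Z_i:i\le n,\ Z_i\leqq z^L\}\big).\]

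To bound the right-hand side, observe that since $z^L$ is a grid point, $\{x\leqq z^L\}\cap[0,1)^d$ coincides up to a null set with a union of cells $Q_{L,\alpha}$; hence the points among $Z_1,\dots,Z_n$ lying in $\{x\leqq z^L\}$ are, conditionally on their number, an i.i.d.\ sample from the normalization of the $L$-piecewise constant function $h:=\tilde f_L\,\chi_{\{x\leqq z^L\}}$. Conditioning on that number (which, divided by $n$, tends almost surely to $q:=\int h$) and applying Theorem~\ref{thm:upper-bound} to the sample from $h/q$ gives
\[\limsup_{n\to\infty}n^{-1/d}\,\ell\big(\{Z_i:i\le n,\ Z_i\leqq z^L\}\big)\ \le\ c_d\sup_{\gamma\in\A}J_h(\gamma)\qquad\text{almost surely},\]
where $J_g(\gamma):=\int_0^1 g(\gamma(t))^{1/d}(\gamma_1'(t)\cdots\gamma_d'(t))^{1/d}\,dt$. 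Any $\gamma\in\A$ with $J_h(\gamma)>0$ may be truncated so as to lie in $\{x\leqq z^L\}$ (once a coordinate of $\gamma$ reaches $z^L_i$ it is frozen, $\gamma$ being nondecreasing, and contributes nothing further), so $\sup_{\gamma\in\A}J_h(\gamma)$ is exactly the value function of $\tilde f_L$ truncated at $z^L$, which by Remark~\ref{rem:truncation} equals $U_{\tilde f_L}(z^L)$, where $U_g$ denotes the value function \eqref{eq:vardef} built from a density $g$. Combining the two displays with $M(n)/n\to(1+\beta_L)^{-1}$ and the fact that $M(\cdot)$ runs through every large integer, we obtain, for each fixed $L$, almost surely
\[\limsup_{n\to\infty}n^{-1/d}u_n(z)\ \le\ \limsup_{n\to\infty}n^{-1/d}u_n(z^L)\ \le\ (1+\beta_L)^{1/d}\,c_d\,U_{\tilde f_L}(z^L),\]
the first inequality because $z\leqq z^L$ and every $u_n$ is Pareto-monotone.

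Finally, let $L\to\infty$. From $f\le f_L\le f+m(\sqrt d/L)$ and $\beta_L\to0$ one checks that $\{\tilde f_L\}_{L\in\N}\subset\B$ satisfies condition \eqref{eq:fn-cond}, so Lemma~\ref{lem:l1-pert} (whose hypotheses (H1) and (H2) hold here) gives $U_{\tilde f_L}\to U$ uniformly on $\R^d$; together with $\beta_L\to0$, $z^L\to z$, and the continuity of $U$ (Lemma~\ref{lem:holder}), the right-hand side of the last display tends to $c_dU(z)$. Intersecting the countably many almost sure events over $L\in\N$ then gives \eqref{eq:pointwise-limit}. I expect the limiting step to be the main obstacle: truncation at $z$ introduces a genuine jump of the density across the boundary of $\{x\leqq z\}$, so the limit cannot be identified by elementary estimates on $J$ alone and one must invoke the viscosity-solution stability packaged in Lemma~\ref{lem:l1-pert} (which itself rests on the comparison principle, Theorem~\ref{thm:comparison2}). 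A secondary technical point is the passage from $z$ to the grid point $z^L$, which is what makes the truncated density $h$ genuinely piecewise constant, so that Theorem~\ref{thm:upper-bound} applies directly.
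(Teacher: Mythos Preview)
Your approach is essentially the paper's: dominate $f$ by a piecewise constant majorant, embed the $X_i$ into an i.i.d.\ sample from the normalized majorant via the mixing construction of Lemma~\ref{lem:strip}, apply Theorem~\ref{thm:upper-bound} to the truncated region, and pass to the limit using Lemma~\ref{lem:l1-pert}. The paper partitions $D=\{0\leqq x<z\}$ directly into $k^d$ cells rather than rounding $z$ up to a grid point $z^L$, but this is a cosmetic difference.

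One inaccuracy to fix: the pointwise bound $f_L\le f+m(\sqrt d/L)$ fails on cells that straddle $\partial\Omega$, since (H1) only controls $|f(x)-f(y)|$ for $x,y\in\Omega$ and $f$ may jump to zero across $\partial\Omega$. This does not break the argument, but you must justify $\beta_L\to0$ and \eqref{eq:fn-cond} differently. For $\beta_L\to0$: the cells meeting $\partial\Omega$ have total measure $O(L^{-1})$ by (H2), and on them $f_L-f\le\|f\|_\infty$, while on the remaining cells your bound via $m$ is valid. For $\limsup^*\tilde f_L\le f^*$: at any $x$, points $y$ with $|y-x|\le 1/j$ lie in cells $Q_{L,\alpha}$ contained in the $(\sqrt d/L+1/j)$-neighborhood of $x$, so $f_L(y)\le\sup\{f(w):|w-x|\le\sqrt d/j+1/j\}$ for $L\ge j$, and the right side tends to $f^*(x)$. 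The paper simply asserts that \eqref{eq:fn-cond} ``follows from'' the definition of $f_k$ without invoking the pointwise estimate.
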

\begin{proof}
Set $D=\{x \in \R^d \, : \, 0 \leqq x < z\}$ and $p=\int_D f(x) \, dx$.  Suppose that $p=0$. It follows from (H1) that $f$ is lower semicontinuous, and hence $f(x)=0$ for $x \leqq z$.  Thus $u_n(z)=0=U(z)$ almost surely.

Suppose that $p>0$ and let $\eps > 0$. Let $k\in \N$ and partition $D$ into $k^d$ hypercubes $Q_{k,\alpha}$ for multiindices $\alpha$ with $\|\alpha\|_\infty \leq k$. Define $f_k:\R^d \to [0,\infty)$ by
\begin{equation}\label{eq:piecewise-sup}
f_{k}(x) = \sum_{\alpha} \Big(\sup_{Q_{k,\alpha}} f\Big)\chi_{Q_{k,\alpha}}(x) + f(x)\chi_{\R^d\setminus D}(x),
\end{equation}
and set $p_k = \int_D f_k(x) \, dx$. For every integer $k$, $f_k$ is $k$-piecewise constant on $D$ and $f \leq f_k$.  
Define $v_{k}: \R^d \to \R$ by
\begin{equation}\label{eq:vk}
v_{k}(x) = \sup_{\gamma \in \A \, : \, \gamma\leqq x} \int_0^1 f_{k}(\gamma(t))^\frac{1}{d} (\gam{1}'(t)\cdots \gam{d}'(t))^\frac{1}{d} \, dt.
\end{equation}
Note that the sequence $f_{k}$ is uniformly bounded, Borel-measurable, and has compact support in $[0,1]^d$. Furthermore, it follows from \eqref{eq:piecewise-sup} that \eqref{eq:fn-cond} holds for the sequence $f_k$.
Hence by Lemma \ref{lem:l1-pert} we have that $v_{k} \to U$ uniformly as $k \to \infty$.
Now fix $k$ large enough so that 
\begin{equation}\label{eq:limit}
|v_{k}(z) - U(z) | \leq \frac{\eps}{c_d}.
\end{equation}

Set
\begin{equation}\label{eq:lambda-pt}
\lambda = \left(\int_{\R^d} f_k(x) \, dx\right)^{-1},
\end{equation}
and define $g=\lambda f_k$.  Then $\lambda f \leq g$ and we can write $g$ as a convex combination of two distributions as follows:
\[g = \lambda f + (g-\lambda f).\]
Let $Y_1,\dots,Y_n$ be \iid~with density $(1-\lambda)^{-1} (g-\lambda f)$, let $m_1,\dots,m_n$ be Bernoulli random variables with parameter $\lambda$, and set
\[i_j = m_1+ \cdots + m_j.\]
 Define
\[Z_j = m_j X_{i_j} + (1-m_j) Y_j.\]
Then a simple computation shows that $Z_1,\dots,Z_n$ are \iid~with density $g$.  Let $W$ denote the cardinality of $\{Z_1,\dots,Z_n\}\cap D$.  Since $g$ is $k$-piecewise constant on $D$, we can apply Theorems \ref{thm:lower-bound} and \ref{thm:upper-bound} to obtain
\begin{equation}\label{eq:init-limit}
\lim_{n\to \infty} W^{-\frac{1}{d}} \ell\left(\{Z_1,\dots,Z_n\}\cap D\}\right) = c_d p_k^{-\frac{1}{d}} v_k(z) \almostsurely
\end{equation}
Note that $W$ is Binomially distributed with parameters $n$ and $\lambda p_k$, hence $n^{-1} W \to \lambda p_k$ almost surely.  Applying this to \eqref{eq:init-limit} we have
\begin{align}\label{eq:second-limit}
\lim_{n \to \infty} n^{-\frac{1}{d}} \ell\left(\{Z_1,\dots,Z_n \}\cap D\right) &{}={} \lim_{n\to \infty} \ (n^{-1} W)^\frac{1}{d} W^{-\frac{1}{d}} \ell\left(\{Z_1,\dots,Z_n \}\cap D\right) \notag \\
&{}={} c_d \lambda^\frac{1}{d} v_k(z)\almostsurely 
\end{align}
Set $m=i_n$.  Note that $m$ is Binomially distributed with parameters $n$ and $\lambda$, and 
\begin{equation}\label{eq:um}
u_m(z) = \ell\left( \{X_1,\dots,X_m\} \cap D \right) \leq \ell\left(\{Z_1,\dots,Z_n\}\cap D\right).
\end{equation}
Combining \eqref{eq:um} with \eqref{eq:second-limit} and the fact that $n^{-1} m \to \lambda$ as $n\to \infty$ we have
\begin{align}\label{eq:final-limit}
\limsup_{n\to \infty}  \ m^{-\frac{1}{d}} u_m(z) &{}\stackrel{\eqref{eq:um}}{\leq}{} \lim_{n\to \infty} \ (m^{-1} n)^{\frac{1}{d}}n^{-\frac{1}{d}}\ell\left(\{Z_1,\dots,Z_n\}\cap D\right) \notag \\
&{}\stackrel{\eqref{eq:second-limit}}{=} c_d v_k(z) \almostsurely
\end{align}
Recalling \eqref{eq:limit} we have
\[\limsup_{n\to \infty} \ m^{-\frac{1}{d}} u_m(z) \leq c_d U(z) + \eps \almostsurely\]
As in the proof of Lemma \ref{lem:strip}, the proof is completed by noting that $n\mapsto m(n)$ is monotone nondecreasing along every sample path and $m\to \infty$ as $n\to \infty$ with probability one.
\end{proof}

\begin{theorem}\label{thm:pointwise-lower}
Let $f:\R^d \to \R$ satisfy (H1) and let $\Omega\subset \R^d_+$ satisfy (H2). Then for every $z \in \R^d$ we have
\begin{equation}\label{eq:pointwise-limit2}
\liminf_{n\to \infty} \ n^{-\frac{1}{d}} u_n(z) \geq c_d U(z) \almostsurely
\end{equation}
\end{theorem}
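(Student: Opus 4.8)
The plan is to mirror the proof of Theorem~\ref{thm:pointwise-upper}, replacing the sup-approximations of $f$ by inf-approximations and the sample-\emph{augmenting} coupling there by a sample-\emph{thinning} coupling. Set $D=\{x\in\R^d \, : \, 0\leqq x<z\}$ and $p=\int_D f$. If some coordinate of $z$ is nonpositive then $D=\varnothing$, and if $p=0$ then (H1) forces $f$ to be lower semicontinuous, so that $f=0$ on $\{x\leqq z\}$; in either case $U(z)=0$ and \eqref{eq:pointwise-limit2} is trivial since $u_n\geq 0$. So assume $z>0$ and $p>0$, and fix $\eps>0$.

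For $k\in\N$ partition $D$ into $k^d$ boxes $Q_{k,\alpha}$ and set
\[
f_k(x)=\sum_\alpha\Big(\inf_{Q_{k,\alpha}}f\Big)\chi_{Q_{k,\alpha}}(x)+f(x)\chi_{\R^d\setminus D}(x),
\]
so that $f_k\in\B$, $f_k\leq f$ everywhere, and $f_k$ is $k$-piecewise constant on $D$. Exactly as for the sup-approximations in the proof of Theorem~\ref{thm:pointwise-upper}, the sequence $\{f_k\}$ satisfies \eqref{eq:fn-cond} (the lower bound coming from (H1) on $\Omega$, the upper bound from $f_k\leq f$), so Lemma~\ref{lem:l1-pert} yields $v_k\to U$ uniformly, where $v_k$ is defined by \eqref{eq:vk}. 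Fix $k$ large enough that $p_k:=\int_D f_k>0$ and $v_k(z)\geq U(z)-\eps/c_d$.

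Next I would build a thinning coupling on an enlarged probability space: let $V_1,V_2,\dots$ be i.i.d.\ uniform on $[0,1]$, independent of the $X_i$, and call $X_i$ \emph{retained} if $V_i\leq f_k(X_i)/f(X_i)$ (with $0/0:=0$; the ratio lies in $[0,1]$ since $f_k\leq f$). The retained points form a sub-collection of $\{X_1,\dots,X_n\}$ which, conditionally on their number, is i.i.d.\ with density $f_k/q_k$, $q_k=\int_{\R^d}f_k\leq 1$. Since $D\subset\{x\leqq z\}$, any chain among retained points that lies in $D$ is a chain among $\{X_i \, : \, X_i\leqq z\}$, whence
\[
u_n(z)\ \geq\ \ell\big(\{\,X_i\, : \, i\leq n,\ X_i\text{ retained}\,\}\cap D\big).
\]
Let $W$ be the number of retained $X_i$ with $i\leq n$ falling in $D$; then $W$ is Binomial$(n,p_k)$ and, given $W$, these points are i.i.d.\ with the $k$-piecewise constant conditional density $f_k\chi_D/p_k$ on $D$. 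Applying the affine bijection $x\mapsto(x_1/z_1,\dots,x_d/z_d)$, which sends this density to a $k$-piecewise constant density $h$ on $[0,1)^d$ and preserves $\ell$ by \eqref{eq:ell-invariant}, Theorems~\ref{thm:lower-bound} and~\ref{thm:upper-bound} (together with $W\to\infty$ and $n^{-1}W\to p_k$ a.s., handled by conditioning exactly as in the proof of Theorem~\ref{thm:lower-bound}) give
\[
n^{-\frac1d}\,\ell\big(\{\,X_i\, : \, i\leq n,\ X_i\text{ retained}\,\}\cap D\big)\ \longrightarrow\ p_k^{\frac1d}\,c_d\,\bar J_h\quad\text{a.s.}
\]

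The step that needs the most care is the identification $\bar J_h=p_k^{-1/d}v_k(z)$, which makes the right-hand side above equal to $c_d v_k(z)$. This rests on two observations. First, if $\gamma\in\A$ and $\gamma\leqq z$, then at every $t<1$ at which the integrand of $J(\gamma)$ (computed with density $f_k$) is nonzero we have $\gamma_i'(t)>0$ for all $i$ and $\gamma(t)\in\supp f_k\subset\R^d_+$, which forces $0<\gamma(t)<z$, i.e.\ $\gamma(t)\in D$; hence the energies of $\gamma$ computed with $f_k$ and with $f_k\chi_D$ coincide, so $v_k(z)=\sup_{\gamma\in\A,\,\gamma\leqq z}\int_0^1(f_k\chi_D)(\gamma(t))^{1/d}(\gam{1}'(t)\cdots\gam{d}'(t))^{1/d}\,dt$. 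Second, the change of variables $\gamma\leftrightarrow\tilde\gamma$ with $\tilde\gamma_i=\gamma_i/z_i$, together with the Jacobian appearing in the push-forward density $h$, scales this energy by exactly the factor $p_k^{-1/d}$ and is a bijection between $\{\gamma\in\A:\gamma\leqq z\}$ and the part of $\A$ relevant to $\bar J_h$ (curves leaving $[0,1]^d$ contribute nothing, since $\supp h\subset[0,1]^d$, by the same argument as in the first observation). Combining these, $\bar J_h=p_k^{-1/d}v_k(z)$, so the a.s.\ limit above is $c_d v_k(z)\geq c_d U(z)-\eps$; since the event $\{\liminf_n n^{-1/d}u_n(z)\geq c_d v_k(z)\}$ is $\sigma(X_1,X_2,\dots)$-measurable and is implied by this a.s.\ event on the enlarged space, we get $\liminf_n n^{-1/d}u_n(z)\geq c_d U(z)-\eps$ a.s., and sending $\eps\downarrow 0$ along a sequence finishes the proof. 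Note that, unlike the upper bound, no re-indexing along a Binomial subsequence is needed here, since $n$ is common to $u_n(z)$ and to the thinned sample.
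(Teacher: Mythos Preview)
Your proof is correct and follows essentially the same route as the paper's: build the piecewise-constant minorant $f_k$, thin the sample so that the retained points have density proportional to $f_k$, apply Theorems~\ref{thm:lower-bound}--\ref{thm:upper-bound} to the retained points in $D$, and pass to the limit $k\to\infty$ via Lemma~\ref{lem:l1-pert}. The one substantive difference is that you thin with probability $f_k(X_i)/f(X_i)$ directly, whereas the paper normalizes this ratio by its sup-norm $\|q\|_{L^\infty}$ (with $q=f_k/f$); consequently your retained-in-$D$ count $W$ is Binomial$(n,p_k)$ rather than Binomial$(n,p_k/\|q\|)$, and your intermediate lower bound is $c_d v_k(z)$ rather than the paper's $c_d\|q\|^{-1/d}v_k(z)$. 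Since $v_k\to U$ and $\|q\|\to 1$ as $k\to\infty$, both routes reach the same conclusion, and yours has the mild advantage of not tracking the extra factor. You also make explicit two points the paper leaves implicit: the identification $\bar J_h=p_k^{-1/d}v_k(z)$ via the affine change of variables, and the descent from the enlarged probability space back to $\sigma(X_1,X_2,\dots)$.
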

\begin{proof}
Let $\eps>0$.  As in the proof of Theorem \ref{thm:pointwise-upper}, we set $D:=\{x \in \R^d \, : \, 0 \leqq x < z\}$ and we may suppose that $p:=\int_D f(x) \, dx >0$.  As before, let $k\in \N$ and partition $D$ into $k^d$ hypercubes $Q_{k,\alpha}$ for multiindices $\alpha$ with $\|\alpha\|_\infty \leq k$. Define $f_k:\R^d \to [0,\infty)$ by
\begin{equation}\label{eq:piecewise-inf}
f_{k}(x) = \sum_{\alpha} \Big(\inf_{Q_{k,\alpha}} f\Big)\chi_{Q_{k,\alpha}}(x) + f(x)\chi_{\R^d\setminus D}(x),
\end{equation}
and set $p_k = \int_D f_k(x) \, dx$. 
Define
\begin{equation}\label{eq:q}
q(x) =\displaystyle \begin{cases}
\frac{f_k(x)}{f(x)},& \text{if } f(x) > 0 \\
0,& \text{otherwise.}
\end{cases}
\end{equation}
For any $\alpha$ such that $Q_{k,\alpha} \subset \Omega$, we have by (H1) and \eqref{eq:piecewise-inf} that
\[f_k(x) \geq f(x) - m\left(\frac{\sqrt{d}}{k}\right) \text{ for } x \in Q_{k,\alpha}.\]
It follows that $\|q\|_{L^\infty(\R^d)} \nnearrow 1$ as $k\to \infty$.  As in the proof of Theorem \ref{thm:pointwise-upper}, we have that $v_{k} \to U$ uniformly as $k \to \infty$, where $v_k$ is defined by \eqref{eq:vk}.  We can therefore fix $k$ large enough so that 
\begin{equation}\label{eq:limit-lower}
c_d\|q\|^{-\frac{1}{d}}_{L^\infty(\R^d)} v_k(z) \geq c_d U(z)  - \eps.
\end{equation}

For $i=1,\dots,n$, let $m_i$ be a Bernoulli zero-one random variable with parameter $\|q\|^{-1}_{L^\infty(\R^d)} q(X_i)$.  Let $m=m_1+ \cdots + m_n$ and let $i_1,\dots,i_m$ denote the indices for which $m_i=1$. We claim that $X_{i_1},\dots,X_{i_m}$ are \iid~with density $g:=\lambda f_k$ where $\lambda$ is defined in \eqref{eq:lambda-pt}.
To see this, first note since $f(x)=0$ implies $f_k(x) = 0$, we have $q(x) f(x) = f_k(x)$ for all $x\in \R^d$.  Thus
\begin{equation}\label{eq:Pm1}
P(m_i = 1) = \int_{\R^d} P(m_i = 1\, | \, X_i=x) f(x) \, dx = \int_{\R^d} \frac{q(x)}{\|q\|} f(x) \, dx = \frac{1}{\lambda\|q\|},
\end{equation}
where $\|q\|=\|q\|_{L^\infty(\R^d)}$.  Let $j\geq 1$ and let $A \subset \R^d$ be measurable.  We have
\begin{align*}
P(X_{i_j} \in A) &{}={} P(X_i \in A \, | \, m_i = 1) \\
&{}={} \frac{P(X_i \in A \ \text{ and } \ m_i = 1)}{P(m_i=1)} \\
&{}\hspace{-0.75mm}\stackrel{\eqref{eq:Pm1}}{=}{}\lambda \|q\| \int_{A} P(m_i = 1 \, | \, X_i=x) f(x) \, dx \\
&{}={} \lambda\|q\| \int_{A} \frac{q(x)}{\|q\|} f(x) \, dx \\
&{}={} \int_{A} \lambda f_k(x) \, dx.
\end{align*}
By the construction of $X_{i_1},\dots,X_{i_m}$, they are independent random variables, hence the claim is established.

Let $W$ denote the cardinality of $\{X_{i_1},\dots,X_{i_m}\}\cap D$.  By Theorems \ref{thm:lower-bound} and \ref{thm:upper-bound}, we have
\begin{equation}\label{eq:lim1}
\lim_{n\to \infty} W^{-\frac{1}{d}} \ell\left(\{X_{i_1},\dots,X_{i_m}\}\cap D\right) = c_d p_k^{-\frac{1}{d}} v_k(z) \almostsurely
\end{equation}
Define
\[w_i = \begin{cases}
1,&\text{if } m_i = 1 \ \text{ and } \ X_i \in D\\
0,& \text{otherwise.}
\end{cases}\]
Then $W=w_1 + \cdots w_n$. Each $w_i$ is a Bernoulli zero-one random variable with parameter
\[P(w_i=1) = P(m_i =1  \ \text{ and } \ X_i \in D) = \int_D P(m_i = 1 \, | \, X_i = x) f(x) \, dx = \frac{p_k}{\|q\|}.\]
It follows that $W$ is Binomially distributed with parameters $n$ and $\|q\|^{-1} p_k$, and hence $n^{-1} W \to \|q\|^{-1} p_k$ almost surely. Combining this with \eqref{eq:lim1} yields
\begin{align}\label{eq:lim2}
\lim_{n\to \infty} n^{-\frac{1}{d}} \ell\left(\{X_{i_1},\dots,X_{i_m}\}\cap D\right) &{}={}\lim_{n\to \infty} \ (n^{-1} W)^\frac{1}{d} W^{-\frac{1}{d}} \ell\left(\{X_{i_1},\dots,X_{i_m}\}\cap D\right) \notag \\
&{}={}c_d \|q\|^{-\frac{1}{d}}  v_k(z) \almostsurely
\end{align}
Noting that
\[u_n(z) = \ell\left(\{X_1,\dots,X_n\}\cap D \right) \geq \ell\left(\{X_{i_1},\dots,X_{i_m}\}\cap D\right),\]
we have
\begin{align}\label{eq:lim3}
\liminf_{n\to \infty} \  n^{-\frac{1}{d}} u_n(z) &{}\geq{}\lim_{n\to \infty} n^{-\frac{1}{d}} \ell\left(\{X_{i_1},\dots,X_{i_m}\}\cap D\right)\notag \\
&\hspace{-1mm}\stackrel{\eqref{eq:lim2}}{=} c_d \|q\|^{-\frac{1}{d}}  v_k(z) \almostsurely
\end{align}
Recalling \eqref{eq:limit-lower} we have
\[\liminf_{n\to \infty} \ n^{-\frac{1}{d}} u_n(z) \geq c_d U(z) - \eps \almostsurely,\]
which completes the proof.
\end{proof}

We now have the proof of Theorem \ref{thm:linfty-conv}.
\begin{proof}
Let $\eps>0$. Let $k\in \N$ and for a multiindex $\alpha \in \Z^d$, set $x_\alpha = \alpha/k$. 
Since $U$ is uniformly continuous (by Lemma \ref{lem:holder}) we can choose $k$ large enough so that 
\begin{equation}\label{eq:smoothness_u}
|U(x_{\alpha+\vb{1}_d}) - U(x_{\alpha})| \leq \frac{\eps}{c_d}
\end{equation}
for all $\alpha\in\Z^d$.    Let $I$ be the set of multiindices $\alpha$ for which $x_\alpha \in [0,1]^d$. Note that the cardinality of $I$ is $(k+1)^d$.  Since $I$ is finite with cardinality independent of $n$, Theorems \ref{thm:pointwise-upper} and \ref{thm:pointwise-lower} yield
\begin{equation}\label{eq:sup-limit}
\lim_{n\to \infty}  \sup_{\alpha \in I} |n^{-\frac{1}{d}} u_n(x_\alpha) - c_d U(x_\alpha)| = 0 \almostsurely
\end{equation}
Let $z \in (0,1]^d$.  Then there exists $\alpha\in I$ such that $x_\alpha < z \leqq x_{\alpha + \vb{1}_d}$.  By the Pareto-monotonicity of $u_n$ and \eqref{eq:smoothness_u} we have
\[n^{-\frac{1}{d}}u_n(z) - c_dU(z) \leq n^{-\frac{1}{d}} u_n(x_{\alpha + \vb{1}_d}) - c_d U(z) \stackrel{\eqref{eq:smoothness_u}}{\leq} n^{-\frac{1}{d}} u_n(x_{\alpha + \vb{1}_d}) - c_d U(x_{\alpha + \vb{1}_d}) + \eps.\]
By a similar argument, we have
\[n^{-\frac{1}{d}}u_n(z) - c_dU(z) \geq n^{-\frac{1}{d}}u_n(x_{\alpha}) - c_d U(x_{\alpha}) - \eps,\]
and hence
\begin{equation}\label{eq:sup-bound}
\| n^{-\frac{1}{d}}u_n - c_dU\|_{L^\infty((0,1)^d)}  \leq \sup_{\alpha \in I} |n^{-\frac{1}{d}} u_n(x_\alpha) - c_d U(x_\alpha)| + \eps.
\end{equation}
Combining \eqref{eq:sup-limit} and \eqref{eq:sup-bound} we have
\[\limsup_{n\to \infty} \|n^{-\frac{1}{d}} u_n - c_d U \|_{L^\infty((0,1)^d)} \leq \eps \almostsurely,\]
and hence $\lim_{n\to \infty} \|n^{-\frac{1}{d}} u_n - c_d U \|_{L^\infty((0,1)^d)} = 0$ almost surely.
The desired result now follows immediately from the boundary conditions on $U$ proved in Theorem \ref{thm:hjb} (i), (ii) and the fact that there are almost surely no samples in $\R^d\setminus (0,1)^d$.
\end{proof}

As a straightforward application of Theorem \ref{thm:linfty-conv}, we can show that non-dominated sorting is stable under bounded random perturbations in the samples $X_1,\dots,X_n$.  For $\delta > 0$, we set 
\[Z_i = X_i + Y_i\delta \ \text{ for }  \ i=1,\dots,n,\]
where $Y_1,\dots,Y_n$ are \iid~with a continuous compactly supported density function $g:\R^d\to \R$.  For $x \in \R^d$, set
\[u^\delta_n(x) = \ell\left(\{Z_i \, : \, Z_i \leqq x\}\right).\] 
\begin{theorem}[Stability of non-dominated sorting]\label{thm:stability}
Let $f:\R^d \to \R$ satisfy (H1) and let $\Omega \subset \R^d_+$ satisfy (H2).  
There exist constants $C_\delta$, depending only on $\delta$, $f$, and $g$, such that
\[\limsup_{n\to \infty} \ n^{-\frac{1}{d}} \|u_n^\delta - u_n\|_{L^\infty(\R^d)} \leq C_\delta \almostsurely \]
and $C_\delta \to 0$ as $\delta \to 0$. 
\end{theorem}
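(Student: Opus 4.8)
The plan is to recognize the perturbed sample $Z_1,\dots,Z_n$ as an \iid~sequence drawn from a mollified density, to apply Theorem~\ref{thm:linfty-conv} both to $\{X_i\}$ and to $\{Z_i\}$, and then to use the perturbation result Lemma~\ref{lem:l1-pert} to control the difference of the two continuum limits as $\delta\to 0$.

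First I would normalize the geometry: after an order-preserving affine change of variables $x\mapsto \lambda x + c\vb{1}_d$ (which transforms $u_n$, $u_n^\delta$ and $U$ consistently, by the invariance \eqref{eq:ell-invariant} and the definition \eqref{eq:vardef} of $U$), we may assume that $\bar{\Omega}$ is a compact subset of $(0,1)^d$. Fix an open box $\Omega'$ with $\bar{\Omega}\subset \Omega'\Subset (0,1)^d$ and choose $\rho>0$ with $\supp g \subset B_\rho(0)$. Writing $g_\delta(z)=\delta^{-d}g(z/\delta)$, the points $Z_i$ are \iid~with density $f_\delta:=f*g_\delta$. For all $\delta$ small enough that $\supp f + B_{\delta\rho}(0)\subset \Omega'$ one has $\supp f_\delta\subset\Omega'$; since $f_\delta$ is continuous (convolution of a bounded function with a continuous, compactly supported one), it is uniformly continuous on the compact set $\overline{\Omega'}$, so $f_\delta$ satisfies (H1) with the fixed Lipschitz box $\Omega'$ (which satisfies (H2)), and $\|f_\delta\|_{L^\infty}\le\|f\|_{L^\infty}$. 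I would also record the elementary sandwich $\inf_{B_{\delta\rho}(y)}f \le f_\delta(y)\le \sup_{B_{\delta\rho}(y)}f$ for all $y\in\R^d$, which holds because $g\ge 0$ integrates to one.

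Next I would apply Theorem~\ref{thm:linfty-conv}: to $\{X_i\}$ it gives a constant $c_d>0$ with $n^{-1/d}u_n\to c_d U$ in $L^\infty(\R^d)$ a.s., and to $\{Z_i\}$ (density $f_\delta$, domain $\Omega'$) it gives $n^{-1/d}u_n^\delta\to c_d U_\delta$ in $L^\infty(\R^d)$ a.s., where $U_\delta$ is the value function \eqref{eq:vardef} associated with $f_\delta$. The triangle inequality then yields $\limsup_{n\to\infty}n^{-1/d}\|u_n^\delta-u_n\|_{L^\infty(\R^d)}\le c_d\|U_\delta-U\|_{L^\infty(\R^d)}=:C_\delta$ almost surely, and $C_\delta$ depends only on $\delta$, $f$ and $g$ (and $d$).

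It remains --- and this is the crux --- to show $C_\delta\to 0$ as $\delta\to 0$; equivalently, that $U_{\delta_n}\to U$ uniformly for every sequence $\delta_n\searrow 0$ (which we may take decreasing). Here I would invoke Lemma~\ref{lem:l1-pert} with $f_n:=f_{\delta_n}$: for $n$ large these lie in $\B$ (uniformly bounded by $\|f\|_{L^\infty}$, continuous hence Borel, and supported in $\Omega'\subset[0,1]^d$), and passing to the limit in the sandwich bound --- using the definitions of $f_*$, $f^*$, $\liminfs{}$ and $\limsups{}$ together with $B_{\delta_n\rho}(y)\subset B_{1/j+\delta_j\rho}(x)$ for $n\ge j$ and $|x-y|\le 1/j$ --- gives $f_*\le\liminfs{n\to\infty}f_{\delta_n}$ and $\limsups{n\to\infty}f_{\delta_n}\le f^*$. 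Since $U_{\delta_n}$ is precisely the value function associated with $f_{\delta_n}$ in \eqref{eq:vardef}, Lemma~\ref{lem:l1-pert} yields $U_{\delta_n}\to U$ uniformly on $\R^d$, whence $C_\delta\to 0$. I expect the delicate parts to be purely in the bookkeeping --- keeping $\supp f_\delta$ inside a fixed Lipschitz subdomain of $\R^d_+$ uniformly for small $\delta$, and verifying the semicontinuous-envelope inequalities needed to apply Lemma~\ref{lem:l1-pert} --- while the genuine analytic content is carried entirely by Theorem~\ref{thm:linfty-conv} and the comparison-principle-based Lemma~\ref{lem:l1-pert}.
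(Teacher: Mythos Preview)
Your proposal is correct and follows essentially the same approach as the paper: identify $Z_i$ as \iid\ with density $f_\delta=f*g_\delta$, apply Theorem~\ref{thm:linfty-conv} to both $\{X_i\}$ and $\{Z_i\}$, use the triangle inequality to set $C_\delta=c_d\|U_\delta-U\|_{L^\infty}$, and invoke Lemma~\ref{lem:l1-pert} to show $U_\delta\to U$ uniformly. Your treatment is in fact slightly more explicit than the paper's in verifying the envelope inequalities \eqref{eq:fn-cond} via the sandwich bound, and in carving out a fixed box $\Omega'\Subset(0,1)^d$; the paper simply takes $\Omega'=(0,1)^d$ and asserts that \eqref{eq:fn-cond} holds.
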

\begin{proof}
Set $g_\delta(x) = \frac{1}{\delta^d} g\left(\frac{x}{\delta}\right)$.  Then $Z_1,\dots,Z_n$ are \iid~with density $f_\delta := g_\delta * f$.  Set 
\[U^\delta(x) = \sup_{\gamma \in \A \, : \, \gamma \leqq x} \int_0^1 f_\delta(\gamma(t))^\frac{1}{d}(\gam{1}'(t)\cdots \gam{d}'(t))^\frac{1}{d} \, dt.\]
Without loss of generality, we may suppose that $\bar{\Omega} \subset (0,1)^d$.  Since $\supp(f) \subset \bar{\Omega}$ and $g$ has compact support, we can take $\delta>0$ small enough so that $\supp(f_\delta) \subset [0,1]^d$.
It is not hard to see that \eqref{eq:fn-cond} holds for the sequence $f_{\delta}$. Since each $f_\delta$ is continuous and bounded with compact support in $[0,1]^d$, it follows from Lemma \ref{lem:l1-pert} that $U^\delta \to U$ uniformly on $\R^d$.  Note that
\begin{align*}
n^{-\frac{1}{d}} \|u_n^\delta - u_n\|_{L^\infty(\R^d)} \leq \|n^{-\frac{1}{d}} u_n^\delta - c_d U^\delta \|_{L^\infty(\R^d)} &+ \|n^{-\frac{1}{d}} u_n - c_d U\|_{L^\infty(\R^d)} \\
&+ c_d \|U^\delta - U\|_{L^\infty(\R^d)},
\end{align*}
for every $n$.
Since $f_\delta$ is continuous on $(0,1)^d$ and $f_\delta(x) = 0$ for $x \not\in (0,1)^d$, (H1) is satisfied for $f_\delta$ by taking $\Omega'=(0,1)^d$.  We can therefore apply Theorem \ref{thm:linfty-conv} to obtain
\[\|n^{-\frac{1}{d}} u_n^\delta - c_d U^\delta \|_{L^\infty(\R^d)} + \|n^{-\frac{1}{d}} u_n - c_d U\|_{L^\infty(\R^d)}  \to 0 \almostsurely\]
The proof is completed by setting $C_\delta = c_d \|U^\delta - U\|_{L^\infty(\R^d)}$.
\end{proof}

\section{Numerical demonstrations}
\label{sec:num}

Theorem \ref{thm:linfty-conv} guarantees that the level sets of $c_dU$ will provide good approximations to the Pareto fronts for large $n$.  In this section, we present a numerical scheme for computing $U$ and show examples comparing the level sets of $c_d U$ to the Pareto fronts for various density functions.  To compute $U$, we need to compute the Pareto-monotone viscosity solution of 
\begin{equation}\label{eq:pde-num}
U_{x_1} \cdots U_{x_d} = \frac{1}{d^d} f \ \  \text{on} \ \ \R^d_+,
\end{equation}
that satisfies $U=0$ on $\partial \R^d_+$.  Let $U_\alpha$ and $f_\alpha$ denote the values of $U$ and $f$ on a grid with spacing $\Delta x$, where $\alpha$ is a multi-index.    For a given grid point $\alpha$, the domain of dependence for \eqref{eq:pde-num} is $\{ \beta \, : \, \beta \leqq \alpha\}$.  Hence an upwind scheme will use backward difference quotients.  Now consider substituting backward difference quotients into \eqref{eq:pde-num}.  We have
\begin{equation}\label{eq:discretize1}
\prod_{i=1}^d (U_\alpha - U_{\alpha - e_i}) = \frac{\Delta x^d}{d^d} f_\alpha.
\end{equation}
We intend to solve the above equation for $U_\alpha$ in terms of $U_{\alpha-e_i}$.  
Since we intend to compute the Pareto-monotone solution of \eqref{eq:pde-num}, we should look for a solution with $U_\alpha \geq U_{\alpha - e_i}$ for all $i$.  Consider the mapping
\[p \mapsto \prod_{i=1}^d (p-a_i) - a_0,\]
where $a_i\geq 0$ for all $i$.  Note that this mapping is strictly increasing for $p> \max(a_1,\dots,a_d)$ and $\prod_{i=1}^d (p-a_i) = 0$ for $p = \max(a_1,\dots,a_d)$.  Hence, for any non-negative $a_0,\dots,a_d$, there exists a unique $p\geq \max(a_1,\dots,a_d)$ satisfying
\[\prod_{i=1}^d(p-a_i) = a_0.\]
We denote this solution $p$ by $P(a_0,\dots,a_d)$ and define our numerical scheme by
\begin{equation}\label{eq:discretize2}
U_\alpha = P(\Delta x^d f_\alpha/d^d, U_{\alpha-e_1},\dots,U_{\alpha-e_d})
\end{equation}
with the boundary condition $U_\alpha = 0$ for $\alpha$ with $\min(\alpha_1,\dots,\alpha_d)=0$.  The numerical solution can be computed by any sweeping pattern respecting the partial order $\leqq$.  This requires visiting each grid point exactly once, and hence has linear complexity.  
The value of $P(a_0,\dots,a_d)$ can be computed numerically by either a binary search and/or Newton's method restricted to the interval $[\max(a_1,\dots,a_d), \max(a_1,\dots,a_d)+a_0^{1/d}]$.
In the case of $d=2$, we have a closed form expression for $P$
\[P(a_0,a_1,a_2) = \frac{1}{2}(a_1+a_2) + \frac{1}{2}\sqrt{(a_1-a_2)^2 + 4a_0}.\]
Note that we have chosen the positive square root to obtain the Pareto-monotone solution.  We prove in a subsequent paper~\cite{calder2013b}, that the numerical solutions, defined as above, converge to the unique Pareto-monotone viscosity solution of \eqref{eq:pde-num} as $\Delta x \to 0$.  

\begin{figure}[t!]
\centering
\subfigure{\includegraphics[clip = true, trim = 30 25 60 20, width=0.475\textwidth, height=0.255\textheight]{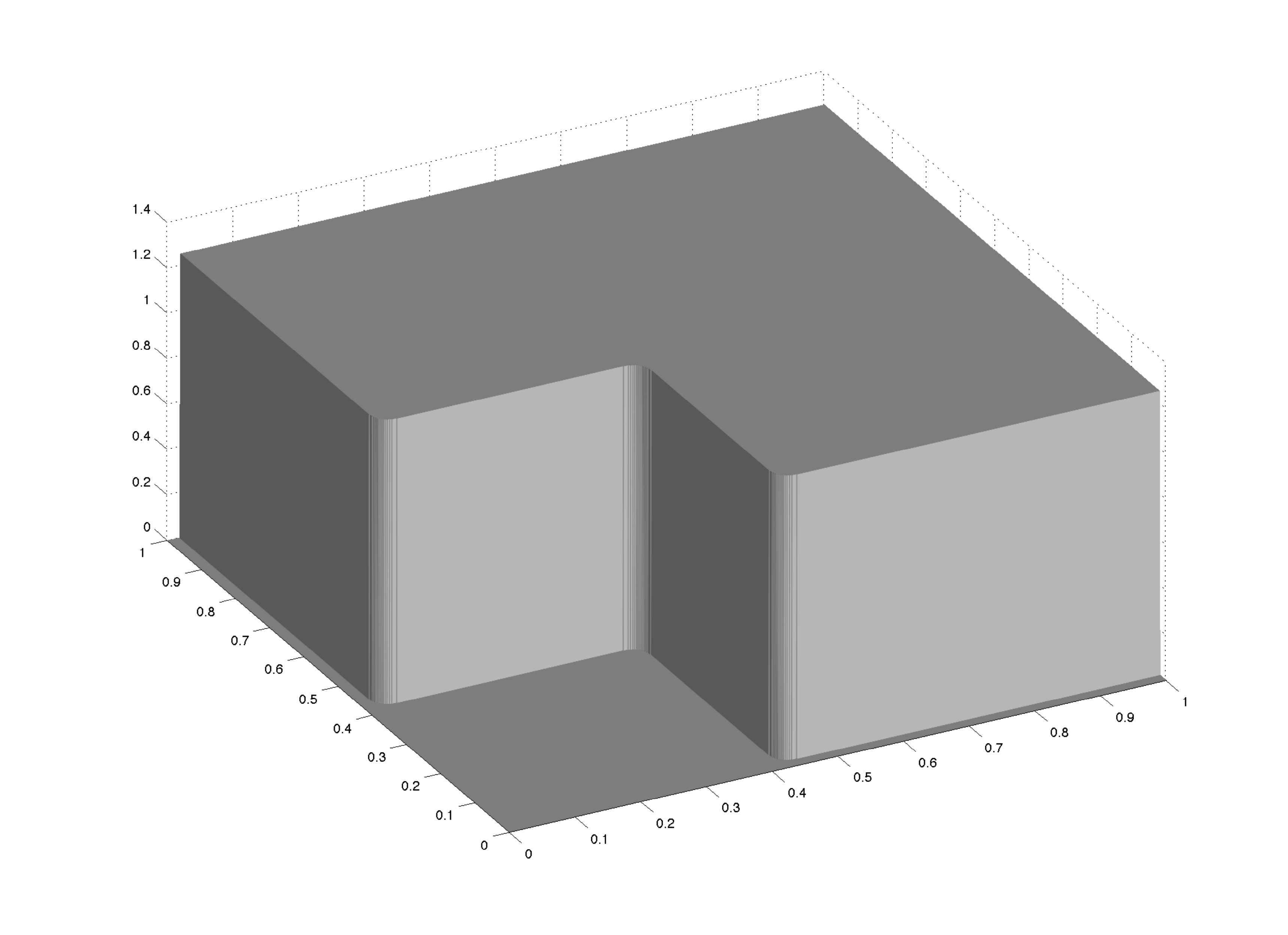}}
\subfigure{\includegraphics[clip = true, trim = 30 25 40 20, width=0.475\textwidth]{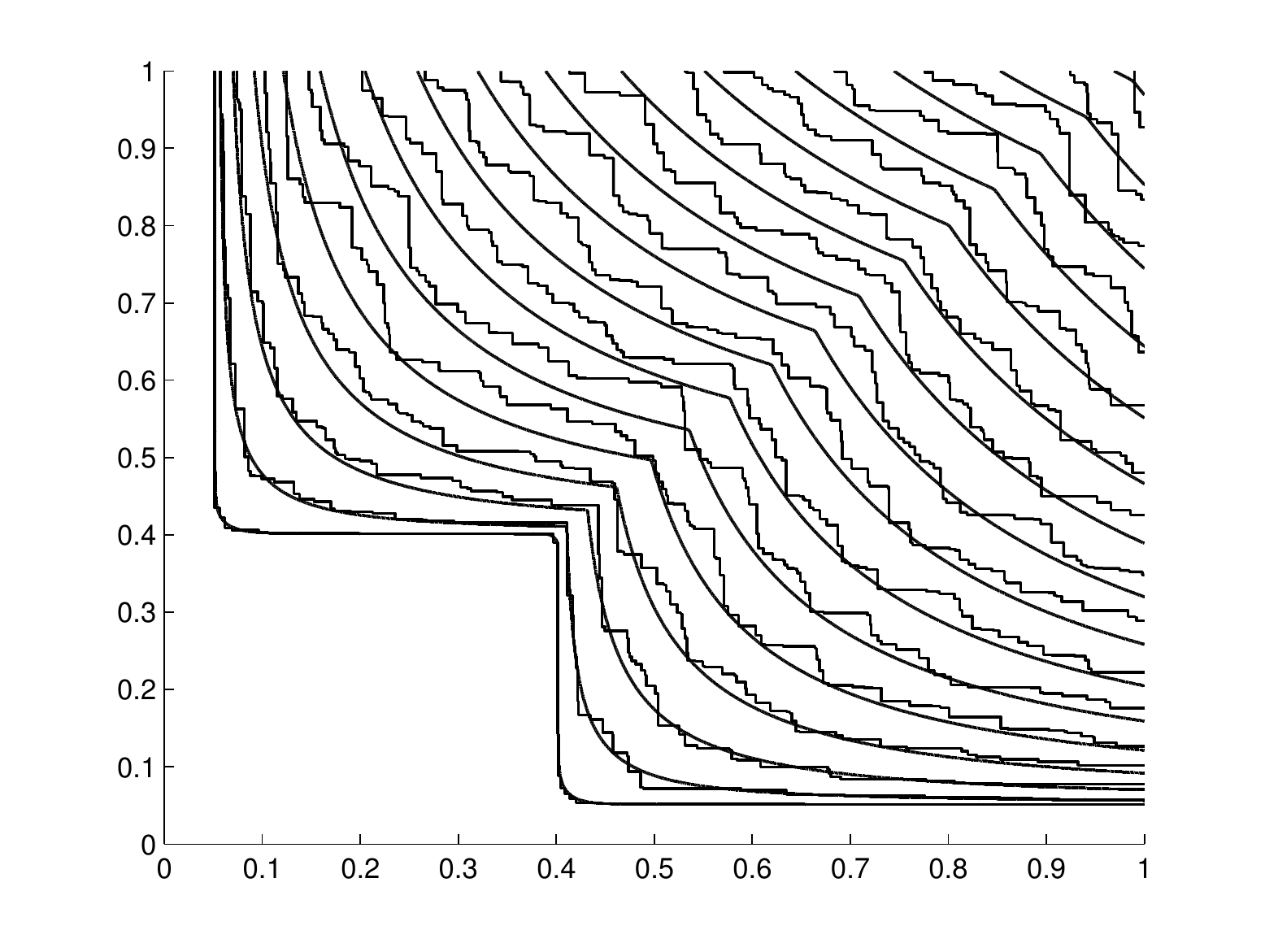}}\\
\subfigure{\includegraphics[clip = true, trim = 30 25 40 20, width=0.475\textwidth]{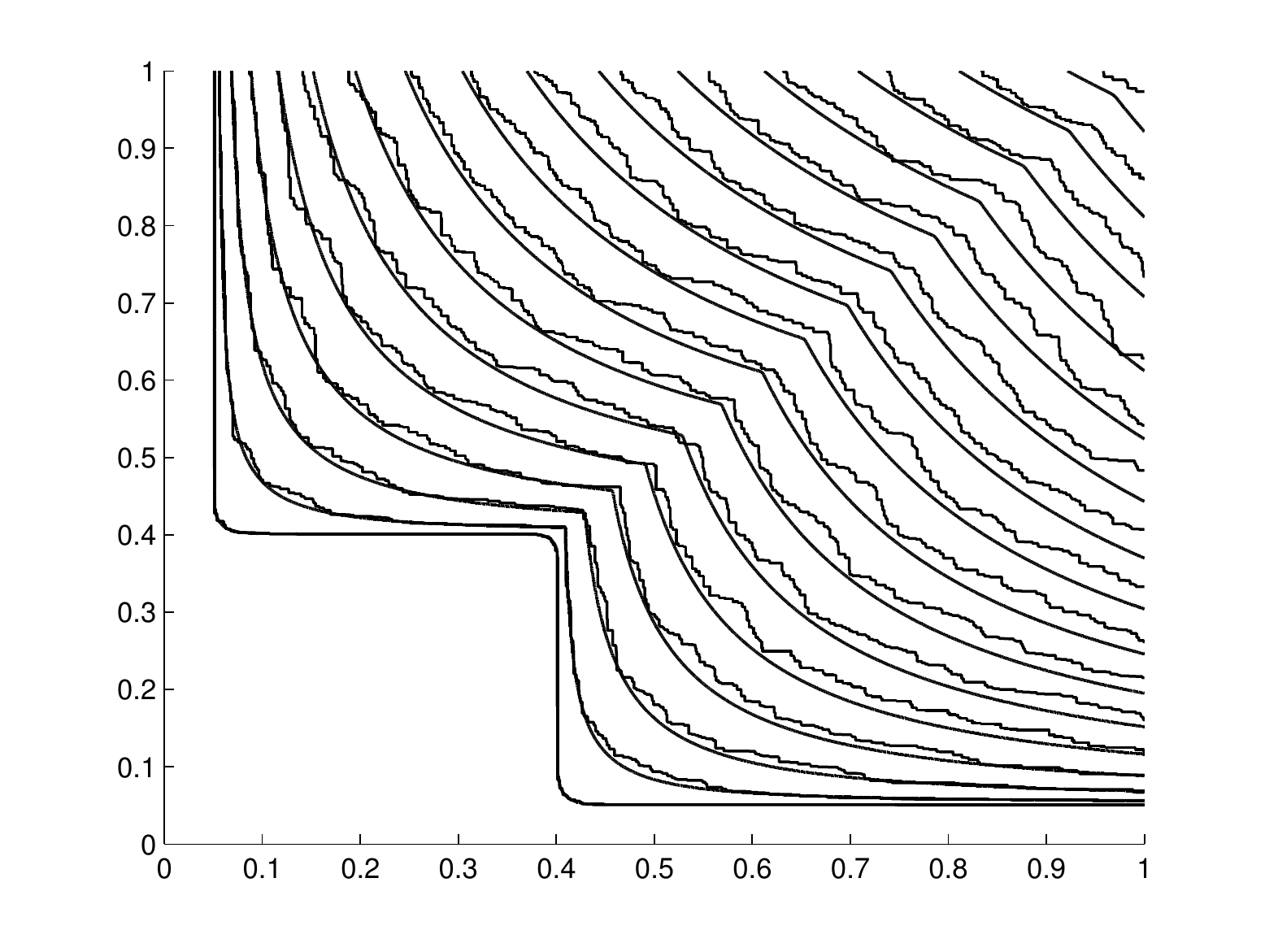}}
\subfigure{\includegraphics[clip = true, trim = 30 25 40 20, width=0.475\textwidth]{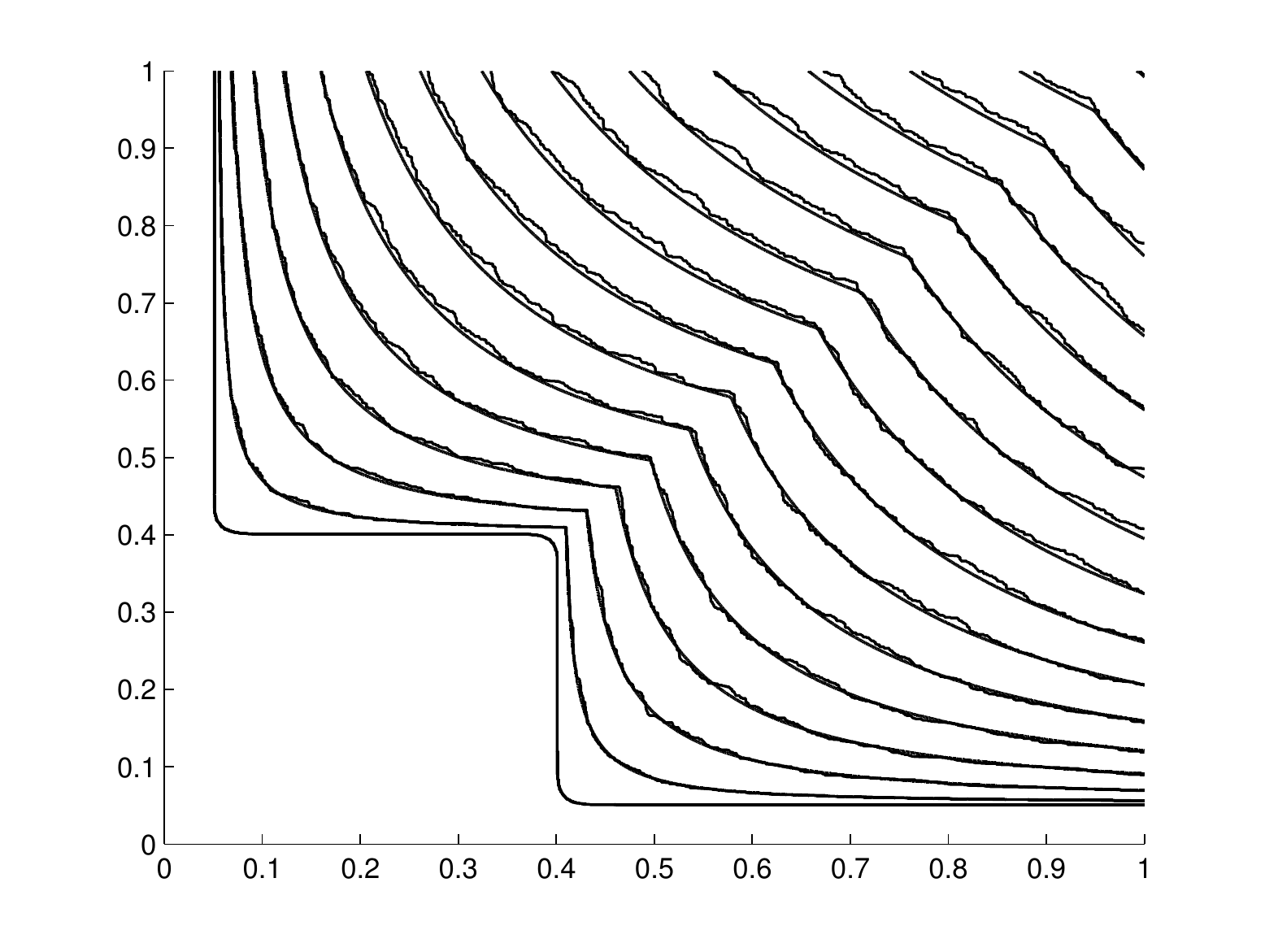}}
\caption{Comparison of the Pareto fronts and the level sets of $U$, where $U_xU_y = f$ and $f$ is the uniform density on the gray region in the top left.  The plots correspond to the Pareto fronts computed with $n=10^4$, $n=10^5$ and $n=10^6$ independent samples from $f$.  In each case, we show 15 equally spaced Pareto fronts and the corresponding level sets of $U$.}
\label{fig:uniform}
\end{figure}

For $d=2$, we have $c_2=2$, hence the level sets of $U$ will approximate the Pareto fronts, where $U_xU_y = f$.  We now show examples of Pareto fronts alongside the level sets of $U$ for $X_1,\dots,X_n$ sampled according to different density functions.  
In Figure \ref{fig:uniform}, we consider a uniform density on a portion of the unit square and show the Pareto fronts for $n=10^4$, $n=10^5$ and $n=10^6$ independent samples alongside the corresponding level sets of $U$.  Observing the Figure, we see that the Pareto fronts are well approximated by the level sets of $U$ for large $n$.  We also notice that the level sets of $U$ appear to yield a consistent underestimate of the Pareto fronts.  
Bollob\'as and Brightwell~\cite{bollobas1992} showed that the normalized expectation of the longest increasing subsequence among $n$ points chosen independently from the uniform distribution on $[0,1]^d$ is always bounded above by $c_d$, which is the limit of these normalized expectations as $n\to \infty$.  In light of this result, our observation is not surprising and merely confirms the results in~\cite{bollobas1992}.  We also observe that although the boundary of $\Omega$ is smooth, the solution $U$ develops shocks, or kinks, which are visible in the level sets of $U$.
\begin{figure}[t!]
\centering
\subfigure{\includegraphics[clip = true, trim = 30 25 25 20, height=0.27\textheight,width=0.475\textwidth]{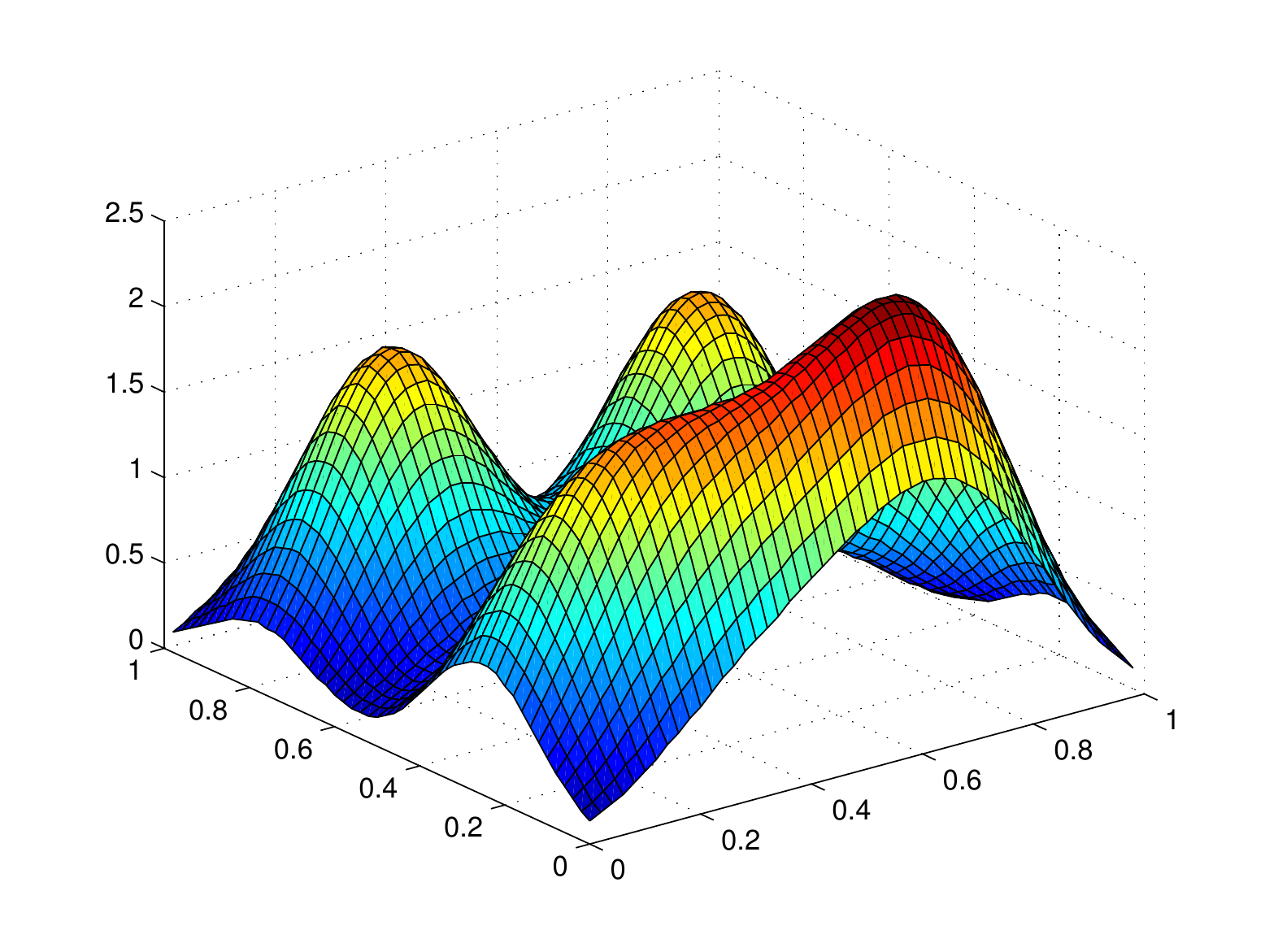}}
\subfigure{\includegraphics[clip = true, trim = 30 25 40 20, width=0.475\textwidth]{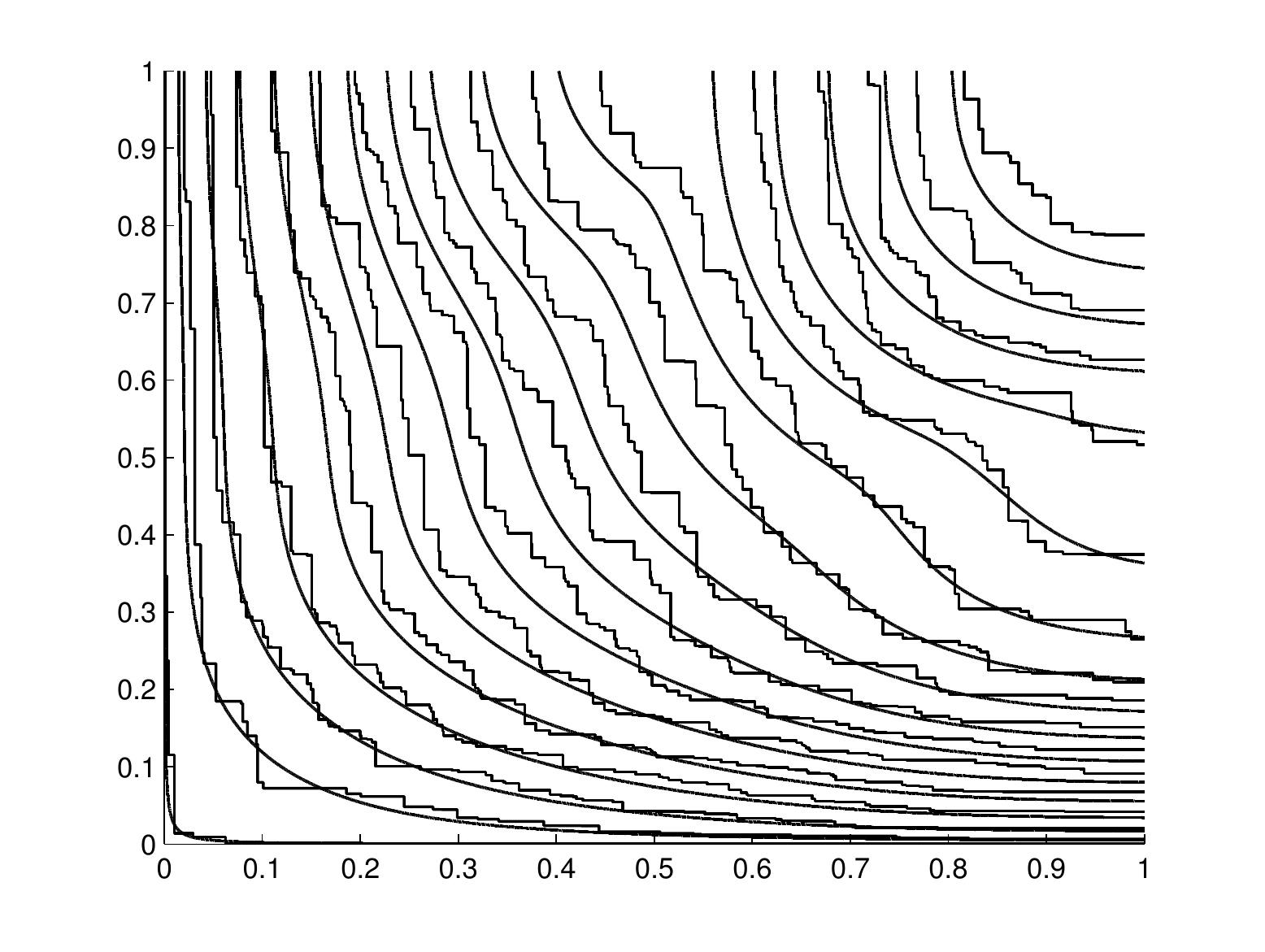}}\\
\subfigure{\includegraphics[clip = true, trim = 30 25 40 20, width=0.475\textwidth]{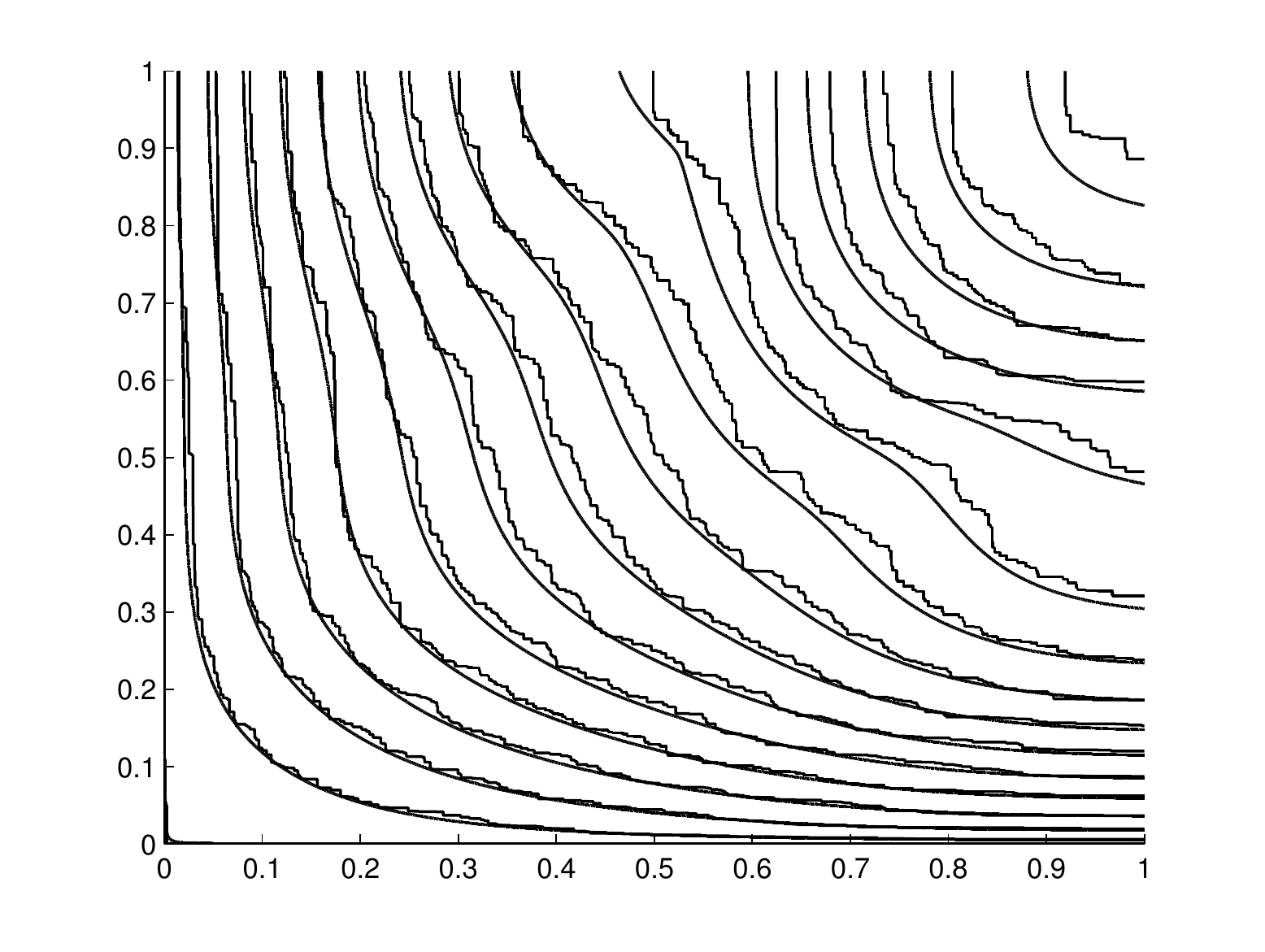}}
\subfigure{\includegraphics[clip = true, trim = 30 25 40 20, width=0.475\textwidth]{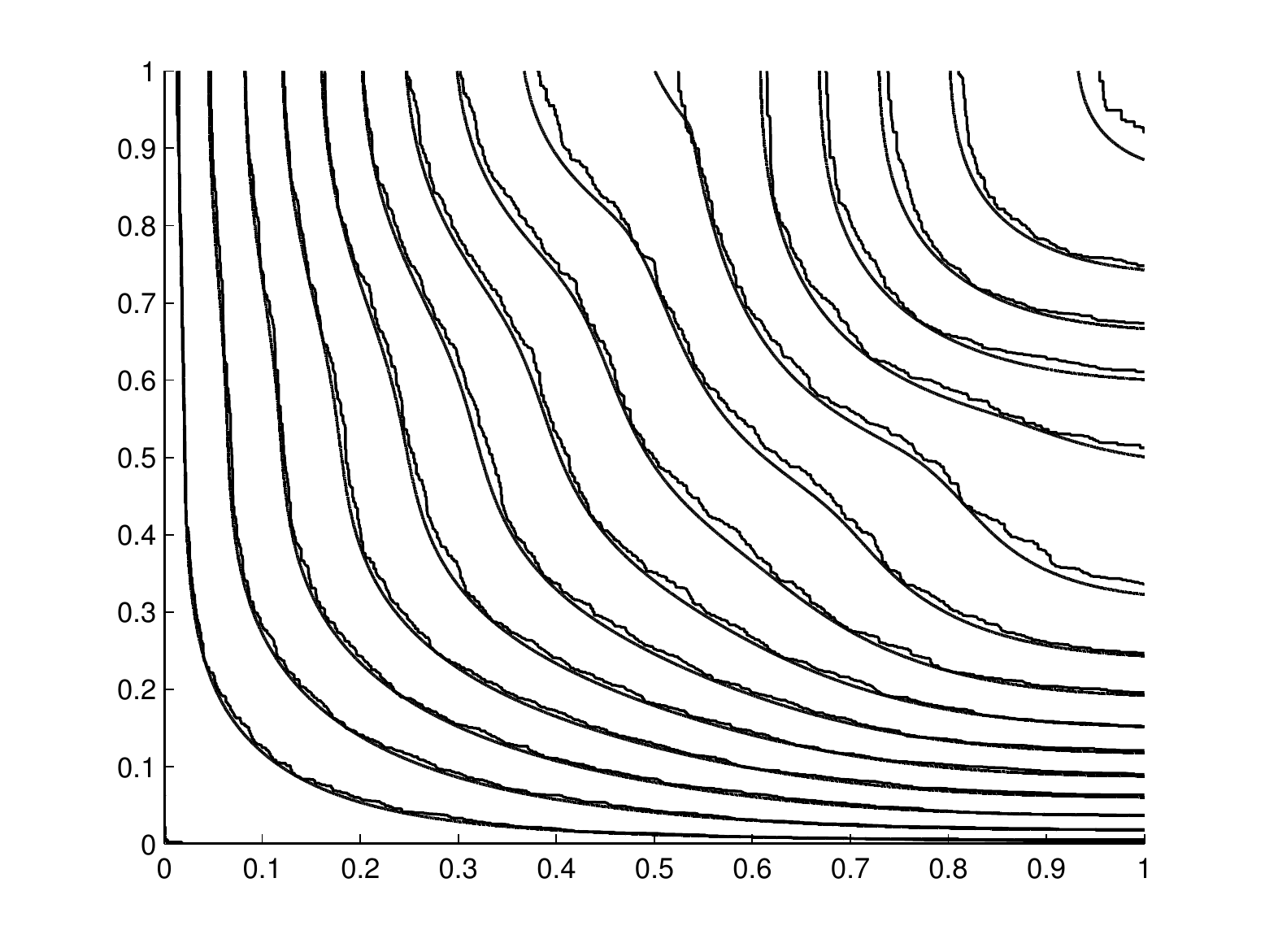}}
\caption{Comparison of the Pareto fronts and the level sets of $U$, where $U_x U_y = f$ and $f$ is the density depicted by the plot in the top left.  The plots correspond to the Pareto fronts computed with $n=10^4$, $n=10^5$ and $n=10^6$ independent samples from $f$. In each case, we show 15 equally spaced Pareto fronts and the corresponding level sets of $U$.}
\label{fig:face}
\end{figure}
In Figure \ref{fig:face}, we show the same comparison for a multi-modal density function on $[0,1]^2$.  The density function is depicted by the plot in Figure \ref{fig:face} and we have the same expected underestimation present here as well.

\section*{Acknowledgments}
We thank John Duchi for pointing out an error in the proof of Theorem \ref{thm:linfty-conv} in an earlier version of the manuscript

\end{document}